\def\AMS{{\it AMS subject classifications: }}
\definecolor{purple}{rgb}{.9,0,.9}
\newcommand{\Real}{\mathbb{R}}
\newcommand{\rfa}{\qquad {\rm for \ all}\ }
\newcommand{\bphi}{\boldsymbol{\phi}}
\newcommand{\cA}{{\mathcal A}}
\newcommand{\cE}{{\mathcal E}}
\newcommand{\cI}{{\mathcal I}}
\newcommand{\cN}{{\mathcal N}}
\newcommand{\cT}{{\mathcal T}}\newcommand{\cU}{{\mathcal U}}
\newcommand{\cV}{{\mathcal V}}\newcommand{\cW}{{\mathcal W}}
\newcommand{\bb}{{\bf b}}
\newcommand{\be}{{\bf e}}\newcommand{\bff}{{\bf f}}
\newcommand{\bu}{{\bf u}}
\newcommand{\bA}{{\bf A}}
\newcommand{\p}{\textbf{p}}
\newcommand{\n}{\textbf{n}}
\newcommand{\F}{\textbf{F}}
\newcommand{\R}{\textbf{R}}
\newcommand{\G}{\textbf{G}}
\newcommand{\J}{\textbf{J}}
\newtheorem{theorem}{Theorem}[section]
\newtheorem{lemma}[theorem]{Lemma}
\newtheorem{definition}[theorem]{Definition}
\newtheorem{assumption}{Assumption}
\newcommand{\beqn}{\begin{equation}}
\newcommand{\eeqn}{\end{equation}}
\newcommand{\norm}[1]{\left\|#1\right \|}
\newcommand{\bbE}{\mathbb{E}}
\newcommand{\bbV}{\mathbb{V}}
\newcommand{\bnu}{\boldsymbol{\nu}}
\newcommand{\ve}{\varepsilon}
\def\Xint#1{\mathchoice
{\XXint\displaystyle\textstyle{#1}}%
{\XXint\textstyle\scriptstyle{#1}}%
{\XXint\scriptstyle\scriptscriptstyle{#1}}%
{\XXint\scriptscriptstyle\scriptscriptstyle{#1}}%
\!\int}
\def\XXint#1#2#3{{\setbox0=\hbox{$#1{#2#3}{\int}$ }
\vcenter{\hbox{$#2#3$ }}\kern-.58\wd0}}
\def\dashint{\Xint-}
\title{Homogenization of a viscoelastic model for  plant cell wall biomechanics$^\ast$}
\author{Mariya Ptashnyk$^1$ and Brian Seguin$^2$}
\thanks{$^1$Department  of Mathematics, University of Dundee, Dundee DD1 4HN, UK, m.ptashnyk@dundee.ac.uk \\
 $^2$Department of Mathematics and Statistics, Loyola University Chicago, Chicago 60660 USA, bseguin@luc.edu\\
$^\ast$M.\ Ptashnyk and B.\ Seguin gratefully acknowledge the support of the EPSRC First Grant EP/K036521/1 ``Multiscale modelling and analysis of mechanical properties of plant cells and tissues''.}
\begin{document}

\maketitle

\begin{abstract} 
The microscopic structure of a plant cell wall is given by cellulose microfibrils embedded in a cell wall matrix. 
In this paper we  consider a  microscopic model  for interactions between viscoelastic deformations of a plant cell wall and chemical processes in  the cell wall matrix. We consider elastic  deformations of the cell wall microfibrils and viscoelastic Kelvin--Voigt type deformations of the cell wall matrix. Using homogenization  techniques  (two-scale convergence and periodic unfolding methods) we derive   macroscopic equations from the microscopic model for  cell wall biomechanics consisting of strongly coupled equations of linear  viscoelasticity and a system of reaction-diffusion and ordinary differential equations. As  is typical for  microscopic viscoelastic problems,  the macroscopic equations for viscoelastic deformations of plant cell walls  contain memory terms. The derivation of the macroscopic problem for degenerate viscoelastic equations is conducted using a perturbation argument.
 \end{abstract}

\vspace{0.5 cm}

 \AMS{\small 35B27, 35Q92, 35Kxx, 74Qxx, 74A40,  74D05}

{\small  {\it Key words:} Homogenization; two-scale convergence; periodic unfolding method; viscoelasticity; 

plant modelling.
%


\section*{Introduction}

To obtain a better understanding of the mechanical properties and development of plant tissues it is important to model and analyse the interactions between the chemical processes and mechanical deformations of plant cells. The main feature of plant cells are their walls, which must be strong to resist high internal hydrostatic pressure (turgor pressure) and flexible to permit growth.  The biomechanics of plant cell walls is determined by the cell wall microstructure,  given by   microfibrils, and  the physical  properties of the cell
wall matrix.    The orientation of microfibrils, their length, high tensile strength, and interaction with wall matrix macromolecules strongly influences the wall's stiffness.   It is also supposed that calcium-pectin cross-linking chemistry is one of the main regulators of cell wall elasticity and extension \cite{WHH}.  Pectin can be modified  by the enzyme pectin methylesterase (PME), which removes methyl groups by breaking ester bonds. The de-esterified pectin  is able to form  calcium-pectin cross-links, and so stiffen the cell wall and reduce its expansion, see e.g.~\cite{WG}.  It has been shown that   the    modification of pectin  by PME and  the control of the amount of  calcium-pectin cross-links  greatly influence the  mechanical deformations of plant cell walls \cite{P2011,P2010},  and the interference with PME activity causes dramatic changes in growth behavior of plant cells and tissues \cite{Wolf}.
%

To address the interactions between chemistry and mechanics, in the microscopic model for plant cell wall biomechanics we consider  the influence of the microstructure, associated with the cellulose microfibrils,  and  the calcium-pectin cross-links on the mechanical properties of  plant  cell walls.  
We model the cell wall as a three-dimensional continuum consisting of a polysaccharide matrix embedded with cellulose microfibrils.  Within the matrix, we consider the dynamics of the enzyme PME, methylesterfied pectin, demethylesterfied pectin, calcium ions, and calcium-pectin cross-links.  It was observed experimentally that plant cell wall microfibrils are anisotropic, see e.g.\ \cite{DMKM}, and   the cell wall matrix  in addition  to elastic deformations  exhibits viscous behaviour, see e.g.\ \cite{Hayot}. Hence we model  the cell wall matrix  as a linearly  viscoelastic Kelvin--Voigt material, whereas microfibrils are modelled as an anisotropic  linearly elastic material.  The model for plant cell wall biomechanics in which   the cell wall matrix was assumed to be a linearly elastic was derived and analysed in \cite{PS}. The interplay between the mechanics and the cross-link dynamics comes in by assuming that the elastic and viscous properties of the cell wall matrix depend on the density of the cross-links and that stress within the cell wall can break calcium-pectin cross-links.  
 The  stress-dependent opening of calcium channels in the cell plasma membrane is addressed in the flux boundary conditions for calcium ions.
The resulting microscopic model is a system of strongly coupled four diffusion-reaction equations, one ordinary differential equation,  and the equations of  linear viscoelasticity. Since only the cell wall matrix is viscoelastic we obtain degenerate elastic-viscoelastic equations. In our model we focus on the interactions between the chemical reactions within the cell wall and its deformation and, hence, do not consider the growth of the cell wall.  

 To analyse the macroscopic mechanical properties of the plant cell wall we rigorously derive  macroscopic equations from the microscopic description of plant cell wall biomechanics. 
   The two-scale convergence, e.g.~\cite{allaire,Nguetseng}, and the periodic unfolding  method, e.g.~\cite{CDG,CDDGZ}, are applied to obtain the macroscopic equations. For the  viscoelastic equations the macroscopic momentum balance equation contains a term that depends on the history of the strain  represented by an integral term (fading memory effect).    Due to the coupling between the viscoelastic properties and the biochemistry of a plant cell wall,   the  elastic and viscous tensors depend on space and time variables. This fact introduces additional complexity in the derivation and  in the structure of the macroscopic   equations, compered to classical viscoelastic equations.

The main novelty of this paper is  the  multiscale analysis and derivation of the macroscopic problem   from a microscopic description of the mechanical and chemical processes. 
This approach allows us to take into account the complex microscopic structure of a plant cell wall and  to analyze the impact of the heterogeneous distribution of cell wall structural elements on the mechanical properties and development of plants.   
The main mathematical difficulty arises from  the   strong coupling between the equations of linear  viscoelasticity  for cell wall mechanics  and the system of reaction-diffusion and ordinary differential  equations for the chemical processes in the wall matrix. Also   the degeneracy of the  viscoelastic equations, due to the fact that   only the cell wall matrix is assumed to be viscoelastic  and  microfibres are assumed to be elastic, induces additional technical diffuculties.

  A multiscale analysis of the viscoelastic  equations with time-independent coefficients  was  considered previously  in \cite{FS,GPX,Mas1987,SP}.    Macroscopic equations for scalar elastic-viscoelastic equations with time-independent coefficients were derived in \cite{Ene} by applying the H-convergence method \cite{Murat}.    A microscopic viscoelastic Kelvin--Voigt model with time-dependent coefficients  in the context of thermo-viscoelasticity was analyzed in \cite{AKPS}.  Macroscopic equations were derived by applying the method of asymptotic expansion. 

The paper is organised as follows. In Section~\ref{sectmodel} we formulate a mathematical model   for plant cell wall biomechanics in which  the cell wall matrix is assumed to be  viscoelastic. In Section~\ref{mainresults} we summarise the main results of the paper. The well-possednes of the microscopic model is shown in Section~\ref{existence}.  The multiscale analysis   of the microscopic model is conducted in Section~\ref{secthom}. 
Since we assume that only the cell wall matrix exhibits viscoelastic  behaviour and microfibrils are elastic, the viscous tensor is zero in the domain occupied by the microfibrils. This  fact causes  technical difficulties in the multiscale analysis of the microscopic model.  To derive the macroscopic equations for the elastic-viscoelastic model for cell wall biomechanics we first consider  perturbed equations by introducing an inertial term. Then, letting the perturbation parameter in the macroscopic model tend to zero,  we obtain the effective homogenized equations for the original  elastic-viscoelastic model. 

\section{Microscopic model for viscoelastic deformations of plant cell walls}\label{sectmodel}

 It was observed experimentally   that in addition to elastic deformations   the plant cell wall matrix  exhibit viscoelastic behaviour \cite{Hayot}. Hence,  in contrast to the problem considered in \cite{PS},  here  we assume that the deformation in the plant cell wall matrix  is determined by the equations of linear  viscoelasticity. 

We consider a  domain $\Omega=(0,a_1)\times(0,a_2)\times(0,a_3)$ representing a part of a plant cell wall, where $a_i$, $i=1,2,3$, are positive numbers and  the microfibrils are oriented in the $x_3$-direction, see Fig.~\ref{FigDomain1}(a).
The part of $\partial\Omega$ on the exterior of the cell wall   is given by
$
\Gamma_{\cE}= \{a_1\}\times(0, a_2)\times(0,a_3),
$
and the interior boundary $\Gamma_\cI$ of the cell wall is given by
$
\Gamma_{\cI}=\{0\}\times(0, a_2)\times (0,a_3).
$
The top and bottom boundaries are defined by
$\Gamma_{\cU} = (0, a_1) \times\{ 0 \} \times (0, a_3) \cup (0, a_1) \times \{ a_2 \} \times (0, a_3)$. 

To determine the  microscopic structure of the cell wall, we consider $Y= (0,1)^2\times (0,a_3)$ and  define  $\hat Y = (0,1)^2$, and a subdomain $\hat Y_F$ with  $\overline {\hat Y_F}\subset \hat Y$ and $\hat Y_M = \hat Y \setminus \overline{\hat Y_F}$,  see Fig.~\ref{FigDomain1}(b).  Then $Y_F= \hat Y_F \times (0, a_3)$ and $Y_M= \hat Y_M \times (0, a_3)$ represent the cell wall microfibrils and cell wall matrix. We also define $\hat \Gamma = \partial \hat Y_F$ and $\Gamma = \partial Y_F$. 

We assume that the  microfibrils in the cell wall  are distributed periodically  and have a diameter on the order of $\ve$, where  the small parameter $\ve$ characterise the size of the microstructure.
The domains
\begin{equation*}
\Omega_F^\ve =\bigcup_{\xi \in  \mathbb Z^2} \big\{ \ve (\hat Y_F+ \xi) \times(0, a_3)\;\; | \;\;   \ve (\hat Y+ \xi)\subset  ( 0,a_1) \times (0 , a_2 ) \big\}
\quad \text{ and } \quad  \Omega_M^\ve =\Omega \, \setminus \overline {\Omega_F^\ve}
\end{equation*}
denote the part of $\Omega$ occupied by the microfibrils and  by the cell wall matrix, respectively.  The boundary between the matrix and the microfibrils is denoted by
\begin{equation*}
\Gamma^\ve =\partial\Omega_M^\ve\cap\partial\Omega_F^\ve.
\end{equation*} 

\begin{figure}[t]
\includegraphics[width=4.5in]{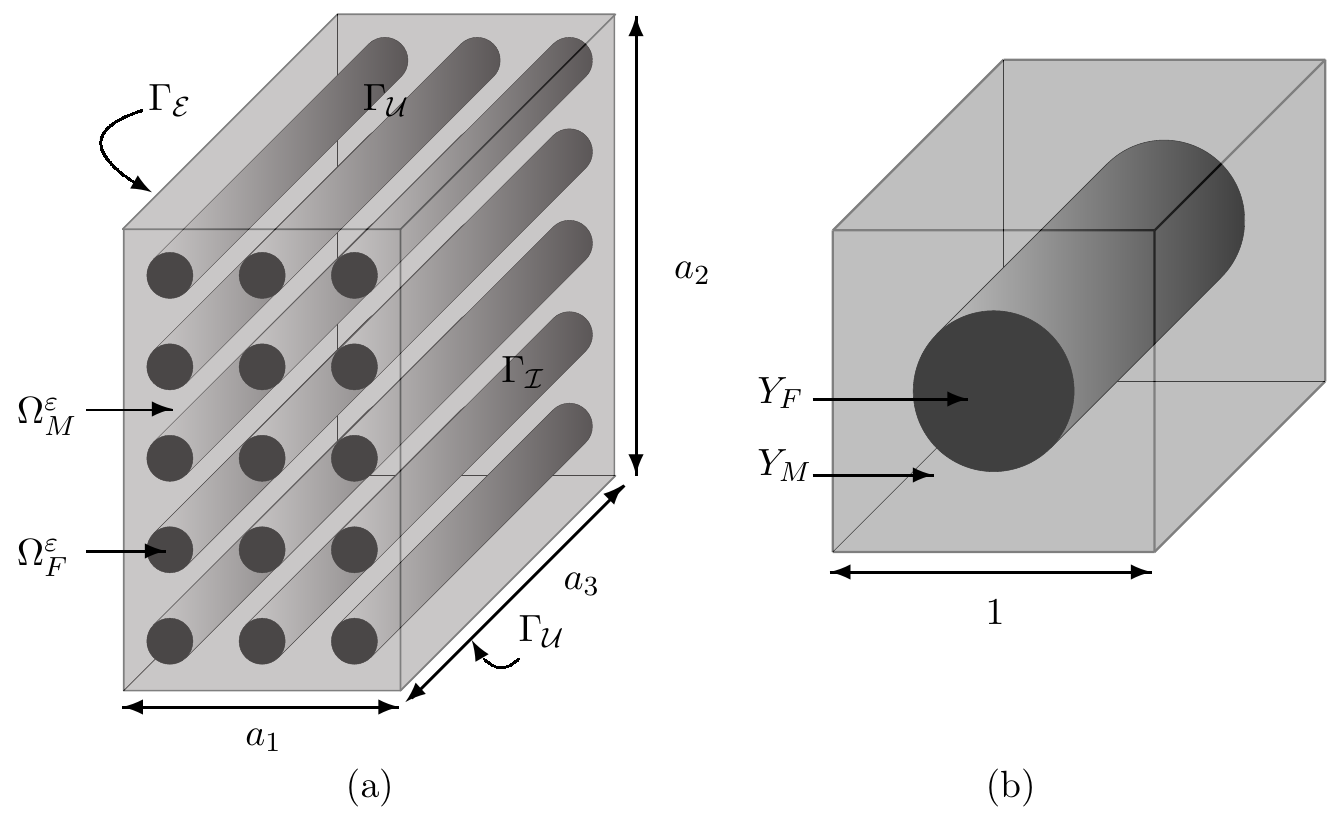}
\caption{(a) A depiction of the domain $\Omega$ with the subsets representing the cell wall matrix $\Omega_M^\ve$ and the microfibrils $\Omega_F^\ve$.  The surface $\Gamma_\cI$ is in contact with the interior of the cell, and the (hidden) surface $\Gamma_\cE$ is facing the outside of the cell, and $\Gamma_\cU$ is the union of the surfaces on the top and bottom of $\Omega$.  (b) A depiction of the unit cell $Y$.}\label{FigDomain1}
\end{figure}

We adopt the following  notation:
$
\Omega_T= (0,T) \times \Omega$,
 $\Omega_{M,T}^\ve= (0,T) \times \Omega_M^\ve$,
$\Gamma_{\cI,T}= (0,T) \times \Gamma_\cI$,   $\Gamma^\ve_{T}= (0,T) \times \Gamma^\ve$,  $\Gamma_{\cU,T}= (0,T) \times \Gamma_\cU$, 
$\Gamma_{\cE,T}= (0,T) \times \Gamma_\cE$, and  $\Gamma_{\cE\cU,T}= (0,T) \times \big(\Gamma_\cE\cup \Gamma_{\cU} \big)$,  and define 
\begin{equation*}
\begin{aligned}
\cW(\Omega)&=\{\bu\in H^{1}(\Omega;\Real^3)\ \big |\ \int_\Omega\bu \, dx=\textbf{0},\; \;  \int_\Omega[(\nabla\bu)_{12}- (\nabla \bu)_{21}]\, dx=\textbf{0}\  \text{and}\ \bu\ \text{is $a_3$-periodic $x_3$}\},\\
\mathcal V(\Omega_{M}^\ve)&=\{n\in  H^1(\Omega_M^\ve) \,  \big |\   
\; n\ \text{is $a_3$-periodic in  $x_3$}\}.
\end{aligned}
\end{equation*}
By Korn's second inequality,  the $L^2$-norm of the strain 
\begin{equation*}
\norm{\bu}_{\cW(\Omega)}=\norm{\be(\bu)}_{L^2(\Omega)}\rfa \bu\in\cW(\Omega)
\end{equation*}
defines a norm on $\cW(\Omega)$, see \cite{CC, Korn, OShY}.  For more details see also \cite{PS}.

The microscopic model for  elastic-viscoelastic deformations  $\bu^\ve$ of cell walls and the densities  of enzyme and pectins:  esterified pectin $\p^\ve_1$,  PME enzyme $\p^\ve_2$,  de-esterified pectin $\n^\ve_1$, calcium  ions $\n^\ve_2$, and calcium-pectin cross-links $b^\ve$ reads  
\begin{eqnarray}\label{visco2}
\left\{
\begin{aligned}
\text{div}(\mathbb E^\ve(n_b^\ve,x)\be(\bu^\ve) + \mathbb V^\ve(n_b^\ve,x)\partial_t\be(\bu^\ve))&=\textbf{0}&& \text{in}\ \Omega_T, \\
(\mathbb E^\ve(n_b^\ve,x)\be(\bu^\ve) +  \mathbb V^\ve(n_b^\ve,x)\partial_t\be(\bu^\ve))\bnu &=-p_\cI\bnu  && \text{on}\  \Gamma_{\cI,T}, \\
(\mathbb E^\ve(n_b^\ve,x)\be(\bu^\ve)+  \mathbb V^\ve(n_b^\ve,x)\partial_t\be(\bu^\ve))\bnu &=\bff && \text{on} \ \Gamma_{\cE\cU,T},\\
\bu^\ve &&&\text{$a_3$-periodic in }  x_3,\\
\bu^\ve(0,x)&=\bu_0(x) &&\text{in } \Omega,
\end{aligned}
\right.
\end{eqnarray}
and in the cell wall matrix $\Omega^\ve_{M,T}$ we consider  
\begin{equation}\label{reactions}
\begin{aligned}
\partial_t \p^\ve & =\text{div}(D_p\nabla \p^\ve) - \F_{p}(\p^\ve)\\
\partial_t \n^\ve & =\text{div}(D_n\nabla \n^\ve) + \F_{n}(\p^\ve, \n^\ve)
+  \R_n (\n^\ve, b^\ve, \mathcal N_\delta(\be(\bu^\ve))) \\
\partial_t b^\ve&= R_b(\n^\ve, b^\ve, \mathcal N_\delta(\be(\bu^\ve))), 
\end{aligned}
\end{equation}
where  $\p^\ve=(\p^\ve_1, \p^\ve_2)^T$, $\n^\ve=(\n^\ve_1, \n^\ve_2)^T$, ${\rm div}(D_p\nabla \p^\ve) =({\rm div}(D_p^1 \nabla \p^\ve_1), {\rm div}(D_p^2\nabla \p^\ve_2))^T$, and ${\rm div}(D_n\nabla \n^\ve) =({\rm div}(D_n^1 \nabla \n^\ve_1), {\rm div}(D_n^2\nabla \n^\ve_2))^T$,  together with the initial and  boundary conditions 
\beqn\label{BC}
\begin{aligned}
& D_p\nabla \p^\ve\, \bnu =  \J_p(\p^\ve)  && \text{on }\;  \Gamma_{\mathcal I, T}, \\
& D_p\nabla \p^\ve\, \bnu =  - \gamma_p \p^\ve  && \text{on } \; \Gamma_{\mathcal E, T}, \\
& D_n\nabla \n^\ve \, \bnu =  \mathcal N_\delta(\be(\bu^\ve))  \G (\n^\ve) && \text{on }\; \Gamma_{\cI, T},\\
& D_n\nabla \n^\ve \, \bnu = \J_n (\n^\ve) && \text{on }\; \Gamma_{\cE, T},\\
&D_p\nabla \p^\ve\, \bnu =0, \quad D_n\nabla \n^\ve\, \bnu =0,      && \text{on  }\; \Gamma^\ve_T \text{ and } \Gamma_{\mathcal U, T},  \\
& \p^\ve, \; \;  \n^\ve &&   \text{$a_3$-periodic in }  x_3,\\
&\p^\ve (0,x) = \p_{0}(x), \quad  \n^\ve (0,x) = \n_{0}(x), \quad b^\ve (0,x) = b_{0}(x) &&   \text{in  }\;   \Omega_M^\ve.
\end{aligned}
\eeqn 
 Here $\mathcal N_\delta(\be(\bu^\ve))$,  defined as
\begin{equation} \label{def_N_2}
\mathcal N_\delta(\be(\bu^\ve))= \Big( \dashint_{B_\delta \cap \Omega} {\rm tr} (\bbE^\ve(b^\ve) \be(\bu^\ve)) d\tilde x \Big)^+ \; \  \text{ in } \; \Omega_T, \quad \text{ for } \delta >0,
\end{equation}
represent the nonlocal impact of mechanical stresses on the calcium-pectin cross-links chemistry. 
From a biological point of view the non-local dependence of the chemical reactions on  the displacement  gradient is motivated by the fact that pectins are very long molecules and hence cell wall mechanics has a nonlocal impact on the chemical processes.
The positive part in the definition of ${\cN}_\delta(\be(\bu^\ve))$   reflects the fact that  extension rather than compression causes the breakage of cross-links.  In the boundary conditions \eqref{BC} we assumed that the flow of calcium ions between the interior of the cell  and the cell wall depends on the displacement gradient, which corresponds to the stress-dependent opening of calcium channels in the plasma  membrane \cite{White}. 

The elasticity and viscosity  tensors are defined as  $\mathbb E^\ve(\xi, x)=  \bbE(\xi, \hat x/\ve)$ and $\mathbb V^\ve(\xi, x)=  \mathbb V(\xi, \hat x/\ve)$,
  where the  $\hat Y$-periodic in $y$  functions $\mathbb E$ and $\mathbb V$ are given by
$
\bbE(\xi ,y)= \bbE_M(\xi) \chi_{\hat Y_M}(y) + \bbE_F \chi_{\hat Y_F}(y)
$
and 
 $\mathbb V(\xi ,y)= \mathbb V_{M} (\xi) \chi_{\hat Y_M}(y)$.

For a given  measurable set $\cA$ we use the notation
$
\langle \phi_1,\phi_2\rangle_{\cA}= \int_\cA \phi_1\phi_2\, dx,
$
where the product of $\phi_1$ and $\phi_2$  is the scalar-product if they are vector valued, and  by 
$\langle \psi_1,\psi_2\rangle_{\mathcal V, \mathcal V^\prime}$ we denote the dual product between  $\psi_1 \in L^2(0,T; \mathcal V(\Omega_{M}^\ve)) $ and  $\psi_2 \in L^2(0,T; \mathcal V(\Omega_{M}^\ve)^\prime)$. 
We also denote $\mathcal I^k_\mu = (-\mu, + \infty)^k$ for $\mu >0$ and $k \in \mathbb N$.

\begin{assumption}\label{assumptions}
\begin{itemize}
\item[1.]  $D_{\alpha}^j, D_b \in \mathbb R^{3\times 3}$ are symmetric,  with   $(D_{\alpha}^j \boldsymbol{\xi}, \boldsymbol{\xi})\geq d_\alpha |\boldsymbol{\xi}|^2$,  $(D_b \boldsymbol{\xi}, \boldsymbol{\xi}) \geq d_b|\boldsymbol{\xi}|^2$ for all $\boldsymbol{\xi} \in \mathbb R^3$ and some  $d_b, d_\alpha > 0$,  where $\alpha=p,n$, $j =1,2$, and  $\gamma_p , \gamma_b \geq 0$.
\item[2.]  $\F_{p}: \mathbb R^2 \to \mathbb R^2$  is continuously  differentiable in $\mathcal I_\mu^2$, with $\F_{p,1}(0, \eta)= 0$, $\F_{p,2}(\xi, 0)= 0$, $\F_{p,1}(\xi, \eta)\geq 0$, and  $|\F_{p,2}(\xi, \eta)|\leq g_1(\xi)(1+ \eta)$  for all $\xi, \eta \in \mathbb R_+$ and some $g_1 \in C^1(\mathbb R_+; \mathbb R_+)$.
\item[3.] $\J_p: \mathbb R^2\to \mathbb R^2$ is continuously  differentiable in $\mathcal I_\mu^2$,     with   $\J_{p,1}(0, \eta) \geq 0$, $\J_{p,2}(\xi, 0) \geq 0$, $|\J_{p,1}(\xi, \eta)|\leq \gamma_J(1+ \xi)$,  and $|\J_{p,2}(\xi, \eta)|\leq g(\xi)(1+ \eta)$   
for  all $\xi,\eta \in \mathbb R_+$ and some  $\gamma_J >0$ and   $g \in C^1(\mathbb R_+; \mathbb R_+)$.
\item[4.]  $\F_n: \mathbb R^4 \to \mathbb R^2$ is continuously  differentiable in $\mathcal I_\mu^4$, with  $ \F_{n,1}(\boldsymbol{\xi}, 0, \boldsymbol{\eta}_2)\geq 0$, $\F_{n,2}(\boldsymbol{\xi}, \boldsymbol{\eta}_1, 0)\geq 0$, and 
  \begin{equation*}
  \begin{aligned}
 |\F_{n,1}(\boldsymbol{\xi}, \boldsymbol{\eta})| \leq \gamma^1_F (1+ g_2(\boldsymbol{\xi})+|\boldsymbol{\eta}|), 
  && |\F_{n,2}(\boldsymbol{\xi}, \boldsymbol{\eta})| \leq \gamma^2_F (1+ g_2(\boldsymbol{\xi})+|\boldsymbol{\eta}|), 
  \end{aligned}
  \end{equation*}
  for  all $\boldsymbol{\xi}, \boldsymbol{\eta} \in \mathbb R^2_+$ and some $\gamma_F^1, \gamma_F^2  >0$ and  $g_2 \in C^1(\mathbb R_+^2; \mathbb R_+)$. 
\item[5.] $\R_n: \mathbb R^3\times \mathbb R_+ \to  \mathbb R^2$ and  $R_b:  \mathbb R^3\times \mathbb R_+ \to  \mathbb R$  are continuously differentiable in $\mathcal I_\mu^3 \times \mathbb R_+$ and satisfy 
  \begin{equation*}
  \begin{aligned}
&\R_{n,1}(0, \boldsymbol{\xi}_2, \eta,  \zeta) \geq 0, \quad &&   |\R_{n,1}(\boldsymbol{\xi},  \eta ,  \zeta)| \leq \beta_1(1+|\boldsymbol{\xi}| + \eta) (1+ \zeta),\\
&\R_{n,2}(\boldsymbol{\xi}_1, 0, \eta,  \zeta) \geq 0 ,    && |\R_{n,2}(\boldsymbol{\xi},  \eta ,  \zeta)| \leq \beta_2(1+|\boldsymbol{\xi}| + \eta) (1+ \zeta) ,   \\
&  R_b(\boldsymbol{\xi}, 0,  \zeta) \geq 0, && |R_b(\boldsymbol{\xi},  \eta ,  \zeta)| \leq \beta_3(1+|\boldsymbol{\xi}| + \eta) (1+\zeta), \qquad (R_b(\boldsymbol{\xi},  \eta ,  \zeta))^+\leq \beta_4
  \end{aligned}
  \end{equation*}
  for  some $\beta_j>0$, \, $j=1,\ldots, 4$,  and  all $\boldsymbol{\xi} \in \mathbb R^2_+$, $\eta, \zeta \in \mathbb R_+$.
\item[6.] $\J_n: \mathbb R^2\to \mathbb R^2$ is continuously  differentiable in $\mathcal I_\mu^2$,   with ${\J}_{n,1}(0, \eta) \geq 0$, ${\J}_{n,2}(\xi, 0) \geq 0$,    $|\J_{n,1} (\xi, \eta) |\leq \gamma_{n}^1(1+ \xi)$,  and $|\J_{n,2} (\xi, \eta)| \leq \gamma_{n}^2(1+ \xi+\eta)$   
for all $\xi, \eta \in \mathbb R_+$ and some $\gamma_{n}^1, \gamma_n^2 >0$. 
\item[7.]  $\G(\xi, \eta): \mathbb R^2 \to \mathbb R^2$, with  $\G(\xi, \eta)=(0,  \gamma_1  - \gamma_2 \eta)^T$ for $\eta \in \mathbb R$ and  some $\gamma_1, \gamma_2 \geq 0$. 
\item[8.]    $\mathbb V_M \in C^1(\mathbb R)$ possesses major and minor symmetries, i.e. $\mathbb V_{M, ijkl}= \mathbb V_{M, klij}=\mathbb V_{M, jikl}=\mathbb V_{M, ijlk}$,  and there exists $\omega_V>0$ such that   $\mathbb V_{M}(\xi)\bA \cdot \bA \geq \omega_V |\bA|^2$   for  all symmetric $\bA \in \mathbb R^{3\times 3}$  and $\xi \in \mathbb R_+$.
\item[9.] $\mathbb E_M \in C^1(\mathbb R)$,  $\mathbb E_F$, $\mathbb E_M$ possess major and minor symmetries, i.e. $\mathbb E_{L, ijkl}= \mathbb E_{L, klij}=\mathbb E_{L, jikl}=\mathbb E_{L, ijlk}$, for $L=F,M$, 
  and there exists $\omega_E >0$ such that  $\mathbb E_F \bA \cdot \bA \geq \omega_E |\bA|^2$ and  $\mathbb E_M(\xi) \bA \cdot \bA \geq \omega_E |\bA|^2$  for  all symmetric $\bA \in \mathbb R^{3\times 3}$ and $\xi \in \mathbb R_+$.
   There exists $\gamma_{M}>0$ such that  $|\mathbb E_M(\xi)| \leq \gamma_{M} $  for all  $\xi \in \mathbb R_+$.
   \item[10.] The initial conditions  $\p_0, \n_{0} \in  L^\infty(\Omega)^2$, $b_0 \in H^1(\Omega)\cap L^\infty(\Omega)$ are non-negative, and  $\bu_0 \in \cW(\Omega)$.
\item[11.] $\mathbf{f} \in H^1(0,T; L^2(\Gamma_{\cE}\cup \Gamma_{\cU}))^3$ and $p_{\cI} \in H^1(0,T; L^2(\Gamma_{\cI}))$.
\end{itemize} 
\end{assumption}  
{\bf Remark. } Notice that Assumption~\ref{assumptions}$.9$ is not restrictive from a physical point of view, since every biological material will have a maximal possible stiffness. Also, in contrast to \cite{PS}, we assume that  $(R_b(\boldsymbol{\xi},  \eta ,  \zeta))^+$ is bounded. This is required  to show a priori estimates for solutions of  equations of linear viscoelasticity independent of $b^\ve$. 

\begin{definition}
A weak solution of the microscopic model \eqref{visco2}--\eqref{BC} are functions $\p^\ve $, $\n^\ve$, and $b^\ve$  such that 
  $b^\ve \in H^1(0,T; L^2(\Omega^\ve_M))$, $\p^\ve, \n^\ve \in L^2(0,T;\cV(\Omega^\ve_M))$, $\partial_t \p^\ve, \partial_t \n^\ve \in L^2(0,T; \cV(\Omega^\ve_M)^\prime)$    and 
satisfy the equations 
\begin{eqnarray}\label{weak_sol_n1}
\begin{aligned}
\langle \partial_t \p^\ve, \bphi_p \rangle_{\cV, \cV^\prime} + \langle D_p \nabla \p^\ve, \nabla \bphi_p \rangle_{\Omega^\ve_{M,T}}& =-  \langle  \F_{p}(\p^\ve), \bphi_p \rangle_{\Omega^\ve_{M,T}}
+ \langle \J_p(\p^\ve), \bphi_p \rangle_{\Gamma_{\cI,T}}-  \langle \gamma_p \p^\ve, \bphi_p \rangle_{\Gamma_{\cE, T}},\\
\langle \partial_t \n^\ve, \bphi_n \rangle_{\cV, \cV^\prime}  + \langle D_n \nabla \n^\ve, \nabla \bphi_n \rangle_{\Omega^\ve_{M,T}}  &= \big\langle \F_{n}(\p^\ve, \n^\ve)+  \R_n(\n^\ve, b^\ve, \mathcal N_\delta(\be(\bu^\ve)))  , \bphi_n \big\rangle_{\Omega^\ve_{M,T}}\\
&+  \big\langle \mathcal N_\delta(\be(\bu^\ve)) \G(\n^\ve), \bphi_n \big \rangle_{\Gamma_{\cI,T}} +  \langle  \J_n  \n^\ve, \bphi_n \rangle_{\Gamma_{\cE, T}}
\end{aligned}
\end{eqnarray}
for all  $\bphi_p, \bphi_n\in L^2(0,T; \cV(\Omega_{M}^\ve))$,   
\begin{equation}\label{weak_sol_n2}
 \partial_t b^\ve =   R_b(\n^\ve, b^\ve,  \mathcal N_\delta(\be(\bu^\ve)))  \quad \text{a.e. in }  \Omega^\ve_{M,T}, 
\end{equation}
and 
 $\bu^\ve \in L^2(0,T; \cW(\Omega))$, with $\partial_t \be(\bu^\ve) \in L^2((0,T)\times\Omega_{M}^\ve)$,  satisfying 
\begin{equation}\label{weakvis2}
\big\langle  \mathbb E^\ve(b^\ve,x)\be(\bu^\ve) + \mathbb V^\ve(b^\ve,x)\partial_t \be(\bu^\ve), \be(\boldsymbol{\psi}) \big\rangle_{\Omega_T} =\langle \bff, \boldsymbol{\psi} \rangle_{\Gamma_{\cE\cU,T}} - \langle p_\cI \bnu, \boldsymbol{\psi} \rangle_{\Gamma_{\cI,T}}
\end{equation}
for all $\boldsymbol{\psi} \in L^2(0,T; \cW(\Omega))$.
Furthermore, $\p^\ve$, $\n^\ve$,  $b^\ve$   satisfy the  initial conditions  in $L^2(\Omega_M^\ve)$ and  $\bu^\ve$ satisfies the initial condition in $\cW(\Omega)$, i.e.\
$\bu^\ve(t,\cdot) \to \bu_0$ in $\cW(\Omega)$, 
 $\p^\ve(t,\cdot) \to \p_0$, 
 $\n^\ve(t,\cdot) \to \n_0$, $b^\ve(t,\cdot) \to b_0$ in $L^2(\Omega_M^\ve)$ as $t \to 0$. 
\end{definition}

\section{Main results}\label{mainresults}
The main result of the paper is   the derivation of the macroscopic equations for the microscopic viscoelastic model for plant cell wall biomechanics. The main difference between the homogenization results presented here and those  in \cite{PS} is due to the presence of degenerate viscose term in the equation for mechanical deformations  of a cell wall. The fact that only the cell wall matrix is viscoelastic and the dependence of the  viscosity tensor on the time variable,  via the dependence on the cross-links density $b^\ve$, make  the multiscale analysis nonclassical and complex. 
 
First we formulate the well-posedness  result for the model \eqref{visco2}--\eqref{BC}. 

\begin{theorem}\label{th_3}
Under  Assumptions \ref{assumptions}  there exists  a unique weak solution  of  \eqref{visco2}--\eqref{BC} satisfying the \textit{a priori} estimates
\begin{eqnarray}\label{apriori_estim_1}
 \| b^\ve \|_{L^\infty(0,T; L^\infty(\Omega_{M}^\ve))} + \|(\partial_t b^\ve)^+ \|_{L^\infty(0,T; L^\infty(\Omega_{M}^\ve))}\leq C_1,\qquad
\end{eqnarray}
where  the  constant $C_1$ is independent of $\ve$ and $\delta$, 
\begin{equation}\label{apriori_visco}
\|\bu^\ve\|_{L^\infty(0,T; \cW(\Omega))} + \|\partial_t  \be(\bu^\ve)\|_{L^2((0,T)\times\Omega_M^\ve)} \leq C_2,
\end{equation}
where the constant $C_2$ is independent of $\ve$, and 
\begin{equation}\label{apriori_estim}
\begin{aligned}
\|\partial_t b^\ve\|_{L^\infty(0,T; L^\infty(\Omega_{M}^\ve))} &\leq C_3, \\
\|\p^\ve\|_{L^\infty(0,T; L^\infty(\Omega_{M}^\ve))} + \|\nabla \p^\ve\|_{L^2( \Omega_{M,T}^\ve)} +\|\n^\ve\|_{L^\infty(0,T; L^\infty(\Omega_{M}^\ve))} + \|\nabla \n^\ve\|_{L^2( \Omega_{M,T}^\ve)} & \leq C_3,\qquad\\
\|\theta^h \p^\ve - \p^\ve\|_{L^2(\Omega_{M, T-h}^\ve)} + \|\theta^h \n^\ve- \n^\ve\|_{L^2(\Omega_{M, T-h}^\ve)}&\leq C_3 h^{1/4}
\end{aligned}
\end{equation}
for any $h>0$, where  $\theta^h v(t,x) = v(t+h, x)$ for $(t,x)\in \Omega_{M, T-h}^\ve$ and the constant $C_3$ is independent of $\ve$ and $h$.
\end{theorem}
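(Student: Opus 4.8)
The plan is to prove Theorem~\ref{th_3} by a fixed-point/decoupling argument combined with Galerkin approximations and careful a priori estimates, exploiting the structure of the coupling. The key observation is that the viscoelastic system \eqref{weakvis2} can be analysed independently of the precise values of $b^\ve$ once one knows that $\bbE^\ve(b^\ve,x)$ and $\bbV^\ve(b^\ve,x)$ are measurable, bounded, and uniformly coercive in the appropriate sense; this is where Assumptions~\ref{assumptions}.8--9 and the new boundedness of $(R_b)^+$ (hence of $(\partial_t b^\ve)^+$, giving the $L^\infty$ bound on $b^\ve$ from $b_0\in L^\infty$ in \eqref{apriori_estim_1}) enter.

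\textbf{Step 1: Construction via a fixed point.} Fix $T$ and work on a ball in a suitable product space. Given $(\bar\p,\bar\n,\bar b)$ with $\bar b\ge 0$ bounded, first solve the linear viscoelastic problem \eqref{weakvis2} with coefficients $\bbE^\ve(\bar b,x)$, $\bbV^\ve(\bar b,x)$. Because $\bbV^\ve$ vanishes on $\Omega_F^\ve$, this is a \emph{degenerate} evolution equation: coercive in $\partial_t\be(\bu^\ve)$ only on the matrix part, and coercive in $\be(\bu^\ve)$ on all of $\Omega$ via $\bbE^\ve\ge\omega_E$. The natural route is to differentiate \eqref{weakvis2} in time (using $\bff, p_\cI\in H^1(0,T;\cdot)$ and $\partial_t\bar b\in L^\infty$) and test with $\partial_t\bu^\ve$: this yields, after a Gronwall argument, control of $\|\be(\bu^\ve)\|_{L^\infty(0,T;L^2(\Omega))}$ and $\|\partial_t\be(\bu^\ve)\|_{L^2(\Omega_{M,T}^\ve)}$, i.e.\ \eqref{apriori_visco}, with a constant independent of $\ve$ because the bounds on the data and on $\bbE,\bbV,\partial_t\bbE,\partial_t\bbV$ are $\ve$-independent. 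The initial value $\partial_t\be(\bu^\ve)(0)$ needed for the differentiated equation is obtained by testing \eqref{weakvis2} at $t=0$ and using $\bu_0\in\cW(\Omega)$ together with coercivity of $\bbV_M$. With $\bu^\ve$ in hand, $\cN_\delta(\be(\bu^\ve))$ is well-defined and, by \eqref{def_N_2} and \eqref{apriori_visco}, bounded in $L^\infty(\Omega_T)$ uniformly in $\ve$ and $\delta$. Next solve the ODE \eqref{weak_sol_n2} for $b^\ve$ (Picard, using $C^1$-regularity of $R_b$ in $\cI_\mu^3\times\Real_+$), getting \eqref{apriori_estim_1} from $(R_b)^+\le\beta_4$ and the lower sign condition $R_b(\cdot,0,\cdot)\ge0$ for non-negativity; then solve the reaction--diffusion system \eqref{weak_sol_n1} for $\p^\ve,\n^\ve$ by Galerkin, using the sign conditions $\F_{p,1}(0,\cdot)\ge0$ etc.\ and positivity of the boundary fluxes to get non-negativity, and the linear-growth bounds in Assumptions~\ref{assumptions}.2--7 for the $L^\infty$ and $H^1$ estimates in \eqref{apriori_estim} (first $\p^\ve$ alone since its system is triangular, then $\n^\ve$, then bootstrap $b^\ve$ to the full $L^\infty$ bound on $\partial_t b^\ve$). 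Uniform $L^\infty$ bounds come from Moser/Alikakos iteration or, more cheaply, from comparison with the ODEs obtained by dropping diffusion. This defines a map $(\bar\p,\bar\n,\bar b)\mapsto(\p^\ve,\n^\ve,b^\ve)$.

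\textbf{Step 2: Compactness, fixed point, uniqueness.} The time-translation estimate $\|\theta^h\p^\ve-\p^\ve\|+\|\theta^h\n^\ve-\n^\ve\|\le C_3h^{1/4}$ follows from $\partial_t\p^\ve,\partial_t\n^\ve\in L^2(0,T;\cV')$ plus the $L^2(0,T;H^1)$ bound by a standard interpolation/Lions--Aubin-in-increments argument (the $h^{1/4}$, rather than $h^{1/2}$, absorbs the trace terms on $\Gamma_\cI,\Gamma_\cE$ and the fact that $\cN_\delta$ is only $L^\infty$ in time). Using these estimates, the solution map is a compact self-map of a suitable closed bounded convex set in, say, $C([0,T];L^2)\times\cdots$, so Schauder gives a fixed point, which is the desired weak solution. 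For uniqueness, take two solutions, subtract, and run a Gronwall estimate; the only delicate point is the Lipschitz dependence of $\cN_\delta(\be(\bu^\ve))$ on $\bu^\ve$ — but the average in \eqref{def_N_2} and $(\cdot)^+$ being $1$-Lipschitz make $\bu^\ve\mapsto\cN_\delta(\be(\bu^\ve))$ Lipschitz from $L^2(0,T;\cW(\Omega))$ to $L^\infty(0,T;C(\bar\Omega))$, and the viscoelastic equation depends Lipschitz-continuously on $b^\ve$ through $\bbE,\bbV\in C^1$ and the uniform bounds on $b^\ve$.

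\textbf{Main obstacle.} The genuine difficulty is the \emph{degeneracy} of \eqref{weakvis2}: since $\bbV^\ve\equiv 0$ on the microfibril phase $\Omega_F^\ve$, one does not get parabolic-type control of $\bu^\ve$ on all of $\Omega$, only on $\Omega_M^\ve$, so the standard energy method does not directly close. The resolution — and the reason the differentiated-in-time equation is the right vehicle — is that $\be(\bu^\ve)$ is controlled in $L^\infty(0,T;L^2(\Omega))$ \emph{elliptically} via $\bbE^\ve\ge\omega_E$ on all of $\Omega$, while $\partial_t\be(\bu^\ve)$ is controlled in $L^2(\Omega_{M,T}^\ve)$ \emph{parabolically} via $\bbV_M\ge\omega_V$ on the matrix; these two pieces together suffice, provided one handles the cross term $\langle\partial_t\bbE^\ve(b^\ve)\be(\bu^\ve),\partial_t\be(\bu^\ve)\rangle$, which is bounded using $\partial_t b^\ve\in L^\infty$, Cauchy--Schwarz, and absorption into the $\bbV_M$-coercivity term. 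Obtaining these estimates with constants independent of $\ve$ — which is exactly what is needed for the subsequent homogenization in Section~\ref{secthom} — requires that all the structural bounds (on data, tensors, and $(\partial_t b^\ve)^+$) be $\ve$-free, which is precisely how Assumption~\ref{assumptions} is set up.
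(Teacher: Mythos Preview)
Your strategy is workable and arrives at the same estimates, but it diverges from the paper's proof in two linked respects.

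\textbf{Fixed-point variable and method.} The paper runs the fixed point on $\bu^\ve$, not on the chemistry: given $\widetilde\bu^\ve\in L^\infty(0,\widetilde T;\cW(\Omega))$, it first solves the full reaction--diffusion--ODE block \eqref{reactions}--\eqref{BC} for $(\p^\ve,\n^\ve,b^\ve)$ (Lemma~\ref{th:exist_1}), then solves \eqref{visco2} with the resulting $b^\ve$ (Lemma~\ref{th_2}), and shows that $\mathcal K:\widetilde\bu^\ve\mapsto\bu^\ve$ is a \emph{contraction} on $L^\infty(0,\widetilde T;\cW(\Omega))$ for small $\widetilde T$. The contraction constant comes from an Alikakos-type $L^\infty$ estimate giving $\|b^{\ve,1}-b^{\ve,2}\|_{L^\infty}\le C\widetilde T^{1/2}\|\be(\widetilde\bu^{\ve,1}-\widetilde\bu^{\ve,2})\|_{L^\infty(L^2)}$. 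This yields existence and uniqueness in one stroke and avoids the Schauder/compactness machinery you invoke.

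\textbf{Energy estimate for the degenerate viscoelastic equation.} The paper does \emph{not} differentiate \eqref{weakvis2} in time; it tests the undifferentiated equation directly with $\partial_t\bu^\ve$. The elasticity term then produces $\tfrac12\tfrac{d}{dt}\langle\bbE^\ve\be(\bu^\ve),\be(\bu^\ve)\rangle_\Omega$ and the cross term $\langle\partial_t b^\ve\,\bbE_M'(b^\ve)\be(\bu^\ve),\be(\bu^\ve)\rangle_{\Omega_M^\ve}$, \emph{not} the term $\langle\partial_t\bbE^\ve\be(\bu^\ve),\partial_t\be(\bu^\ve)\rangle$ you write. The paper's cross term is handled by Gronwall using only the bound on $(\partial_t b^\ve)^+$, which by $(R_b)^+\le\beta_4$ is independent of $\bu^\ve$; this is precisely the decoupling that makes \eqref{apriori_visco} hold with a constant not depending on the chemistry and is why the new hypothesis in Assumption~\ref{assumptions}.5 was introduced. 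Your differentiated route needs the full $\|\partial_t\bar b\|_{L^\infty}$ (borrowed from the fixed-point ball) and forces you to control $\partial_t\be(\bu^\ve)(0)$ on $\Omega_M^\ve$, a compatibility issue in the degenerate setting that the paper's direct approach sidesteps entirely.

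One small slip: in your uniqueness sketch, $\cN_\delta(\be(\bu^\ve))$ also depends on $b^\ve$ through $\bbE^\ve(b^\ve)$ in \eqref{def_N_2}, so its Lipschitz dependence is joint in $(\bu^\ve,b^\ve)$; the paper absorbs this into the contraction estimate for $b^{\ve,1}-b^{\ve,2}$.
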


The  proof of Thorem~\ref{th_3} follows similar lines  as  the proof of the corresponding existence and uniqueness results in \cite{PS}. Thus here we will  only  sketch  the main ideas of the proof and emphasise the steps that are different from those of  the proof in \cite{PS}.

To formulate the macroscopic equations for the microscopic model \eqref{visco2}--\eqref{BC}, first we define the macroscopic coefficients which will be obtained by the derivation of the limit equations. The macroscopic coefficients coming from the elasticity  tensor are given by
\begin{eqnarray}\label{macro_coef_3}
\begin{aligned}
 \widetilde{\mathbb E}_{{\rm{hom}}, ijkl}(b) &= \dashint_{\hat Y} \big[  \mathbb E_{ijkl}(b,y) + \big(\mathbb E(b,y) \hat \be_y(\mathbf{w}^{ij})\big)_{kl}  \big]   dy,\\
 \widetilde{\mathbb K}_{ijkl}(t,s,b) &= \dashint_{\hat Y}  \big(\mathbb E(b(t+s),y) \hat \be_y(\mathbf{v}^{ij}(t,s))\big)_{kl} dy,
  \end{aligned}
\end{eqnarray}
and the macroscopic elasticity and viscosity  tensors and memory kernel  read:
\begin{equation}\label{macro_coef_2}
\begin{aligned}
 \mathbb E_{{\rm{hom}}, ijkl}(b) &= \widetilde{\mathbb E}_{{\rm{hom}}, ijkl}(b) + \frac1{|\hat Y|}\int_{\hat Y_M}  \big(\mathbb V_M(b,y) \hat \be_y(\mathbf{w}^{ij}_t)\big)_{kl} dy, \\
  \mathbb V_{{\rm{hom}}, ijkl}(b) &=\frac1{|\hat Y|}\int_{\hat Y_M}  \big[  \mathbb V_{M, ijkl}(b,y) + \big(\mathbb V_M(b,y) \hat \be_y(\boldsymbol{\chi}^{ij}_{\mathbb V})\big)_{kl} \big] dy,\\
   \mathbb K_{ijkl}(t,s,b) &= \widetilde{\mathbb K}_{ijkl}(t,s,b) + \frac 1 {|\hat Y|} \int_{\hat Y_M} \big(\mathbb V_M(b(t+s),y) \hat \be_y(\mathbf{v}_t^{ij}(t,s))\big)_{kl}  dy,
  \end{aligned}
\end{equation}
where  $\mathbf{w}^{ij}$,   $\boldsymbol{\chi}^{ij}_{\mathbb V}$, and $\mathbf{v}^{ij}$ are solutions of the unit cell problems 
\begin{eqnarray}\label{unit_chi}
\begin{aligned}
\hat {\rm div}_{y} \left(\mathbb E(b,y) (\hat \be_y(\mathbf{w}^{ij})+ \textbf{b}_{ij}) + \mathbb V_M(b,y) \hat  \be_y(\mathbf{w}^{ij}_t) \chi_{\hat Y_M}\right)&={\bf 0}  \quad &&\text{in } \hat Y_T, \\
\mathbf{w}^{ij}(0,x,y) &={\bf 0} && \text{in }  \hat Y, \\  
\hat {\rm div}_{y} \big(\mathbb V_M(b,y)(\hat \be_y(\boldsymbol{\chi}^{ij}_{\mathbb V})+ \textbf{b}_{ij})\big) & ={\bf 0}  \quad &&\text{in } \hat Y_M, \\
\mathbb V_M(b,y)(\hat \be_y(\boldsymbol{\chi}^{ij}_{\mathbb V})+ \textbf{b}_{ij}) \bnu &={\bf 0}  \quad &&\text{on } \hat \Gamma, \\
\int_{\hat Y} \mathbf{w}^{ij} dy={\bf 0}, \quad  \int_{\hat Y_M} \boldsymbol{\chi}^{ij}_{\mathbb V}dy={\bf 0}, \qquad \qquad \mathbf{w}^{ij}, \; \; \boldsymbol{\chi}^{ij}_{\mathbb V} &    &&\hat Y\text{-periodic}, 
\end{aligned}
\end{eqnarray}
where $\bb_{jk} = \frac12 (\textbf{b}_j \otimes \textbf{b}_k + \textbf{b}_k \otimes \textbf{b}_j)$, with $(\textbf{b}_j)_{1\leq j \leq 3}$ being the canonical basis of $\mathbb R^3$,  and 
\begin{eqnarray}\label{unit_wv}
\begin{aligned}
&\hat {\rm div}_{y} \big(\mathbb E(b(t+s,x), y)  \hat \be_y(\mathbf{v}^{ij}) +  \mathbb V_M(b(t+s,x), y) \hat \be_y(\mathbf{v}^{ij}_t) \chi_{\hat Y_M} \big)= {\bf 0} \; \;  && \text{ in }  \hat Y_{T-s},\;\; \\
& \;\mathbf{v}^{ij}(0,s,x,y)= \overline{\boldsymbol{\chi}}^{ij}_{\mathbb V}(s,x,y) - \mathbf{w}^{ij}(s,x,y) &&   \text{ in  }  \hat Y, \\
&\; \int_{\hat Y}\mathbf{v}^{ij} dy={\bf 0}, \hspace{ 7 cm } \mathbf{v}^{ij} \quad && \hat Y\text{-periodic},
\end{aligned}
\end{eqnarray}
for a.a.\ $x \in \Omega$ and $s \in [0, T]$, where $\overline{\boldsymbol{\chi}}^{ij}_{\mathbb V}$ is an extension of 
$\boldsymbol{\chi}^{ij}_{\mathbb V}$ from $\hat Y_M$
 to $\hat Y$, such that $\int_{\hat Y} \overline{\boldsymbol{\chi}}^{ij}_{\mathbb V} dy ={\bf 0}$. Here   for a vector function  ${\bf v}$ we denote  $\hat{\rm div}_y  {\bf v} = \partial_{y_1} {\bf v}_1 +  \partial_{y_2} {\bf v}_2$. 
 
The macroscopic diffusion coefficients   are defined by    
\begin{equation}\label{macro_diff_coef}
\mathcal D^l_{\alpha, ij} = \dashint_{\hat Y_M} \big[  D^l_{\alpha, ij} + (D^l_{\alpha} \hat \nabla_{y}  v^j_{\alpha,l})_i \big] d \hat y \quad \text{ for }   i,j=1,2,3 \quad \alpha =p,n,
\end{equation}
 where $\hat\nabla_y v_{\alpha, l}^j=(\partial_{y_1} v_{\alpha, l}^j, \partial_{y_2} v_{\alpha, l}^j, 0)^T$
  and  the functions   $v_{\alpha, l}^{j}$, for $l=1,2$ and $j=1,2,3$,   are solutions of the unit cell problems
\begin{equation}\label{unit_n}
\begin{aligned}
&{\rm div}_{\hat y} (\hat D_\alpha^l \nabla_{\hat y} v^{j}_{\alpha, l}) =0 &&\text{ in } \hat Y_M, \quad j=1,2, 3,\\
&  (\hat D_\alpha^l \nabla_{\hat y} v^j_{\alpha,l} + \tilde D_\alpha^l \textbf{b}_j) \cdot \bnu =0  &&\text{ on } \hat \Gamma, \quad 
v^{j}_{\alpha, l}  \quad \hat Y -\text{ periodic}, \; \;  \int_{\hat Y_M} v^{j}_{\alpha,l} \, d y =0,
\end{aligned}
\end{equation}
where $\nabla_{\hat y} = (\partial_{y_1}, \partial_{y_2})$, $\hat D_\alpha^l = (D_{\alpha,ij}^l)_{i,j=1,2}$ and $ \tilde D_\alpha^l= (D_{\alpha,ij}^l)_{i=1,2, j=1,2,3}$.

Applying techniques of periodic homogenization we obtain  the macroscopic equations for plant cell wall biomechanics. 
\begin{theorem}\label{viscoelastic} 
A sequence of solutions of the microscopic model \eqref{visco2}--\eqref{BC} converges to a solution of the macroscopic equations
\begin{equation}\label{macro_1}
\begin{aligned}
  \partial_t  \p &= {\rm div} (\mathcal D_p \nabla \p) - \F_p (\p) ,   \\
\partial_t \n&= {\rm div} (\mathcal D_n \nabla \n) +  \F_n(\p, \n)+  \R_n(\n, b,  \mathcal N^{\rm eff}_\delta(\be(\bu))) ,  \\
\partial_t b &=  \phantom{{\rm div} (\mathcal D_c \nabla n_c)+  \F_n(\p, \n)+}  \;  R_b(\n, b,  \mathcal N^{\rm eff}_\delta (\be(\bu)))   
\end{aligned}
\end{equation}
in $\Omega_T$
   together with the initial and  boundary conditions 
\begin{equation}\label{macro_bc}
\begin{aligned}
& \mathcal D_p \nabla \p \,  \bnu = \theta_M^{-1} \J_p (\p),  &&  
\mathcal D_n \nabla \n \,  \bnu =\theta_M^{-1} \G(\n) \mathcal N^{\rm eff}_\delta(\be(\bu)) & & \text{on } \Gamma_{\cI,T}, \\
&\mathcal D_p \nabla \p \, \bnu = - \theta_M^{-1}\gamma_p  \, \p,    && \mathcal D_n \nabla \n \, \bnu =  \theta_M^{-1} \J_n \n   &&  \text{on } \Gamma_{\cE,T},  \\
&\mathcal D_p \nabla \p \,  \bnu = 0,  \quad  &&  \mathcal D_n \nabla \n \,  \bnu = 0 && \text{on } \Gamma_{\cU,T}, \\
&\p, \quad \n && a_3-\text{periodic in } x_3, \\
& \p (0) = \p_0 (x), \quad \n(0)= \n_0, \quad && b(0)= b_0 &&\text{in }  \Omega,  
\end{aligned}
\end{equation}
 where $\theta_M= |\hat Y_M|/ |\hat Y|$, and the macroscopic equations of linear viscoelasticity 
\begin{eqnarray}\label{macro_vis}
\begin{aligned}
{\rm div}\Big( \mathbb E_{\rm{ hom}}\be(\bu)  +  \mathbb V_{\rm{hom}}\partial_t \be(\bu) + \int_0^t   \mathbb K(t-s,s) \partial_s \be(\bu) \, ds \Big)&={\bf 0}  \; \;\; \;   \quad \text{ in }  \Omega_T, \\
\big(\mathbb E_{\rm{hom}}\be(\bu)  +  \mathbb V_{\rm{hom}}\partial_t \be(\bu) + \int_0^t  \mathbb K(t-s,s)\partial_s \be(\bu) \,  ds \big)\bnu &= \bff \;\; \;  \quad  \text{ on } \Gamma_{\cE\cU,T},\quad\quad \\
\big(\mathbb E_{\rm{hom}}\be(\bu)  +  \mathbb V_{\rm{hom}}\partial_t \be(\bu) + \int_0^t  \mathbb K(t-s,s) \partial_s\be(\bu) \, ds\big) \bnu& = - p_\cI \bnu \;    \text{ on } \Gamma_{\cI,T},\quad \\
\bu(0,x) &= \bu_0(x) \; \;    \text{ in } \Omega.
\end{aligned}
\end{eqnarray}
\end{theorem}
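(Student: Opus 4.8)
\textbf{Proof strategy for Theorem~\ref{viscoelastic}.}

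The plan is to follow the standard three-step homogenization scheme — \textit{a priori} estimates, extraction of two-scale limits, identification of the limit equations — but with a regularization detour to handle the degeneracy of the viscosity tensor $\mathbb V^\ve$, which vanishes on $\Omega_F^\ve$. First I would establish the required uniform bounds: Theorem~\ref{th_3} already supplies $\|b^\ve\|_{L^\infty}$, $\|(\partial_t b^\ve)^+\|_{L^\infty}$, $\|\bu^\ve\|_{L^\infty(0,T;\cW(\Omega))}$, $\|\partial_t\be(\bu^\ve)\|_{L^2(\Omega_{M,T}^\ve)}$, together with the $L^\infty$ and $H^1$ bounds on $\p^\ve$, $\n^\ve$ and the fractional time-translation estimates, all independent of $\ve$. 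From these, the unfolded sequences $\mathcal T_\ve(\be(\bu^\ve))$, $\mathcal T_\ve(\p^\ve)$, $\mathcal T_\ve(\n^\ve)$, $\mathcal T_\ve(b^\ve)$ are bounded in the appropriate Bochner spaces, and a combination of two-scale convergence (for the microfibril contributions, where one only controls $\be(\bu^\ve)$ itself) and the unfolding method (for the matrix, where one also controls $\partial_t\be(\bu^\ve)$) yields limits $\bu\in L^2(0,T;\cW(\Omega))$, correctors $\bu_1\in L^2(\Omega_T;H^1_{\rm per}(\hat Y)^3/\Real)$, and $\p,\n\in L^2(0,T;H^1(\Omega))$, $b\in H^1(0,T;L^2(\Omega))$, with $\be(\bu^\ve)\rightharpoonup \be(\bu)+\hat\be_y(\bu_1)$ two-scale. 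The reaction-diffusion part is then handled essentially as in \cite{PS}: pass to the limit in \eqref{weak_sol_n1}--\eqref{weak_sol_n2} using strong two-scale compactness from the Kolmogorov--Riesz time-translation estimates, solve the cell problems \eqref{unit_n}, and recover \eqref{macro_1}--\eqref{macro_bc} with the homogenized diffusion tensors \eqref{macro_diff_coef}; the only extra input is that $\mathcal N_\delta(\be(\bu^\ve))\to\mathcal N^{\rm eff}_\delta(\be(\bu))$, which follows from the two-scale convergence of $\bbE^\ve(b^\ve)\be(\bu^\ve)$ combined with the averaging in the nonlocal operator $\mathcal N_\delta$ and the continuity of $(\cdot)^+$.

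The heart of the argument is the viscoelastic equation \eqref{macro_vis}, and here the degeneracy forces the perturbation argument announced in the introduction. I would first introduce the perturbed microscopic problem obtained by adding an inertial term $\rho\,\partial_t^2\bu^{\ve,\rho}$ (with suitable initial velocity) to \eqref{visco2}; for fixed $\rho>0$ this is a nondegenerate hyperbolic-type system for which the energy estimates give bounds on $\partial_t\be(\bu^{\ve,\rho})$ on \emph{all} of $\Omega_T$ (not just $\Omega_{M,T}^\ve$), uniformly in $\ve$. For fixed $\rho$ one homogenizes in $\ve$ by unfolding: the two-scale limit of $\partial_t\be(\bu^{\ve,\rho})$ exists, the cell problems \eqref{unit_chi} (for $\mathbf w^{ij}$ and $\boldsymbol\chi^{ij}_{\mathbb V}$) and \eqref{unit_wv} (for $\mathbf v^{ij}$) arise by choosing oscillating test functions $\boldsymbol\psi^\ve(t,x)=\boldsymbol\psi(t,x)+\ve\,\boldsymbol\psi_1(t,x,\hat x/\ve)$, and — using the linearity of the cell problems in the data $\be(\bu)$ and $\partial_t\be(\bu)$ and a Duhamel/convolution representation for the memory part — one assembles the macroscopic perturbed equation with coefficients \eqref{macro_coef_3}--\eqref{macro_coef_2}. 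The time-dependence of $\mathbb E$, $\mathbb V$ through $b^\ve(t,x)$ is what produces the two-parameter kernels $\mathbb K(t,s)$ and the extra "$\mathbf w_t^{ij}$" terms in $\mathbb E_{\rm hom}$: the corrector decomposes as a "quasi-static" part driven by the instantaneous value of $b$ plus a "relaxation" part $\mathbf v^{ij}$ carrying the history, exactly as encoded in \eqref{unit_wv} with initial datum $\overline{\boldsymbol\chi}^{ij}_{\mathbb V}-\mathbf w^{ij}$. Finally I would send $\rho\to0$ in the \emph{macroscopic} perturbed equation: the bound $\|\bu\|_{L^\infty(0,T;\cW(\Omega))}+\|\partial_t\be(\bu)\|_{L^2(\Omega_{M,T})}\le C$ survives (with a constant independent of $\rho$, since $\mathbb V_{\rm hom}$ inherits coercivity from $\omega_V$ on the matrix), $\rho\,\partial_t^2\bu^\rho\to0$ in the appropriate negative-order space, and one recovers \eqref{macro_vis}; uniqueness of the limit closes the diagram so that the full sequence (not just a subsequence) converges.

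The main obstacle I anticipate is controlling $\partial_t\be(\bu^\ve)$ globally and passing to the two-scale limit in the degenerate viscous term: on $\Omega_F^\ve$ one has no time-derivative estimate uniform in $\ve$, so the two-scale limit of $\mathbb V^\ve(b^\ve,x)\partial_t\be(\bu^\ve)$ must be extracted purely on the matrix part, and one has to verify that the oscillating test functions do not see spurious contributions from the microfibrils — this is precisely why the cell problems \eqref{unit_chi}, \eqref{unit_wv} carry the cut-off $\chi_{\hat Y_M}$ on the viscous terms and why the $\rho$-perturbation is needed to make the intermediate estimates rigorous. A secondary technical point is justifying the Duhamel representation and the well-posedness/regularity in $(t,s)$ of the cell problem \eqref{unit_wv} — in particular the measurability in $s$ and $x$ of $\mathbf v^{ij}(t,s,x,\cdot)$ and the integrability needed to make $\int_0^t\mathbb K(t-s,s)\partial_s\be(\bu)\,ds$ well-defined — but this is linear parabolic theory with $L^\infty$ coefficients and should go through once the $b$-dependence is treated as a fixed (already identified) coefficient.
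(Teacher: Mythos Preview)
Your overall strategy --- perturb by an inertial term, homogenize the perturbed problem in $\ve$, then send the perturbation parameter to zero at the macroscopic level --- is exactly the paper's approach. However, two concrete points in your account do not work as stated.

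First, the inertial term $\rho\,\partial_t^2\bu^{\ve,\rho}$ does \emph{not} give bounds on $\partial_t\be(\bu^{\ve,\rho})$ on all of $\Omega_T$: the basic energy identity (test with $\partial_t\bu$) yields only $\rho^{1/2}\|\partial_t\bu\|_{L^\infty(0,T;L^2)}$ together with the same matrix-only bound $\|\partial_t\be(\bu)\|_{L^2(\Omega_{M,T}^\ve)}$ that one already has without the perturbation. In the paper the perturbation is $\vartheta\,\chi_{\Omega_M^\ve}\partial_t^2\bu^\ve$, and its real purpose is twofold: it provides the $L^2$ control of $\partial_t\bu^\ve$ which, via extension and Korn, gives $\|\partial_t\bu^\ve\|_{H^1(\Omega_M^\ve)}\le C\vartheta^{-1/2}$ (enough to extract the two-scale limit of $\chi_{\Omega_M^\ve}\nabla\partial_t\bu^\ve$); and, more importantly, it is the ingredient that makes the time-translation estimate $\|\be(\bu^\ve(\cdot+h))-\be(\bu^\ve)\|_{L^2((0,T-h)\times\Omega)}^2\le Ch^{1/2}$ (uniform in $\ve$ and $\vartheta$) go through. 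That estimate, via the Kolmogorov theorem, upgrades the two-scale convergence of $\bbE^\ve(b^\ve)\be(\bu^\ve)$ to \emph{strong} $L^2(0,T)$ convergence of $\int_\Omega\tr\big(\bbE^\ve(b^\ve)\be(\bu^\ve)\big)\,dx$, which is what one actually needs for $\mathcal N_\delta(\be(\bu^\ve))\to\mathcal N^{\rm eff}_\delta(\be(\bu))$; two-scale convergence alone, as you invoke it, is insufficient because $\mathcal N_\delta$ enters the nonlinear reaction terms $R_b$, $\R_n$ multiplicatively with $\n^\ve$ and $b^\ve$.

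Second, the step you treat as routine --- sending $\rho\to0$ in the macroscopic perturbed equation --- is where most of the remaining work lies. The homogenized tensors $\mathbb E^\rho_{\rm hom}$, $\mathbb V^\rho_{\rm hom}$, $\mathbb K^\rho$ all depend on $\rho$ through $b^\rho$ and through the $\rho$-dependent correctors $\mathbf w^{ij}_\rho$, $\boldsymbol\chi^{ij}_{\mathbb V,\rho}$, $\mathbf v^{ij}_\rho$, and one must establish their \emph{strong} convergence in $L^2(\Omega_T)$ (resp.\ $L^2(0,T;L^2(\Omega_t))$ for the kernel) in order to pair them with the only-weakly convergent $\be(\bu^\rho)$ and $\partial_t\be(\bu^\rho)$. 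The paper accomplishes this by a second battery of Kolmogorov-type arguments: uniform-in-$\vartheta$ time- and space-translation estimates for $\hat\be_y(\mathbf w^{ij}_\vartheta)$, $\hat\be_y(\partial_t\mathbf w^{ij}_\vartheta)$, $\hat\be_y(\boldsymbol\chi^{ij}_{\mathbb V,\vartheta})$, and $\hat\be_y(\mathbf v^{ij}_\vartheta(t-s,s))$ in both the $t$ and $s$ variables, driven by the $W^{1,\infty}(0,T;L^\infty(\Omega))$ bound on $b^\vartheta$ and by translation estimates on $b^\vartheta$ in $x$. None of this is visible in your outline, and the claim that ``the bound survives with a constant independent of $\rho$'' covers only $\bu^\rho$, not the coefficients multiplying it.
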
 
Here  
 \begin{eqnarray}\label{macro_N_d_visco}
 && \;  \mathcal N^{\rm eff}_\delta(\be(\bu))= \Big(\dashint_{B_\delta(x)\cap \Omega} {\rm tr}
\Big[\widetilde{\mathbb E}_{\rm{hom}} (b) \be(\bu)  + \int_0^t \widetilde{\mathbb K}(t-s,s, b) \partial_s \be(\bu) ds \Big] d\tilde x \Big)^+ \;  \text{ for all } (t,x)\in (0,T)\times \overline \Omega. 
\end{eqnarray}

\section{Existence of a unique weak solution of the microscopic problem \eqref{visco2}--\eqref{BC}. A priori estimates.}\label{existence} 

In the derivation of a priori estimates for solutions of the microscopic problem  \eqref{visco2}--\eqref{BC}  we shall use an extension of a function defined on a connected perforated domain $\Omega^\ve_M$ to $\Omega$. Applying classical extension results \cite{Acerbi,CiorPaulin99, MP}, we obtain the following lemma.
\begin{lemma}\label{lem:extension}
There exists an extension $\overline v^{\varepsilon}$ of $v^{\varepsilon}$ from
$W^{1,p}(\Omega_M^{\varepsilon})$ into $W^{1,p}(\Omega)$, with  $1\leq p< \infty$, such that 
\[
\|\overline v^{\varepsilon}\|_{L^{p}(\Omega)}\leq\mu_1\|v^{\varepsilon}\|_{L^{p}(\Omega_M^{\varepsilon})} \; \text{ and } \; 
\|\nabla\overline v^{\varepsilon}\|_{L^{p}(\Omega)}\leq\mu_1\|\nabla v^{\varepsilon}\|_{L^{p}(\Omega_M^{\varepsilon})},
\]
where   the constant $\mu_1$ depends only on $Y$ and $Y_M$, and $Y_M\subset Y$ is connected. 
\end{lemma}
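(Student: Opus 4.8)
The plan is to deduce the claim from the classical extension theorem for periodically perforated domains, exploiting that here the perforation is purely two-dimensional: the inclusions are the cylinders $\ve(\hat Y_F+\xi)\times(0,a_3)$ with $\overline{\hat Y_F}\subset\hat Y$, so $\Omega^\ve_M$ is obtained from $\Omega$ by removing holes that are periodic in the $\hat x=(x_1,x_2)$ variables only. Hence it suffices to build an extension in the $\hat x$-variables for a.e.\ fixed $x_3$, depending measurably on $x_3$ with $\ve$-independent bounds; periodicity in $x_3$ is then inherited from that of $v^\ve$ since the construction is local in $x_3$.

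First I would recall the reference-cell construction. Since $\hat Y_M=\hat Y\setminus\overline{\hat Y_F}$ is connected and Lipschitz and $\overline{\hat Y_F}\subset\hat Y$, a collar of $\partial\hat Y$ is contained in $\hat Y_M$; using a Stein/Calder\'on reflection across $\hat\Gamma$ localized to a neighbourhood of $\overline{\hat Y_F}$ and gluing it with $w$ itself by a partition of unity, one obtains a bounded linear operator $\hat P: W^{1,p}(\hat Y_M)\to W^{1,p}(\hat Y)$ that equals the identity near $\partial\hat Y$; see \cite{Acerbi,CiorPaulin99,MP}. Replacing $\hat P w$ by $\hat P\big(w-\dashint_{\hat Y_M}w\big)+\dashint_{\hat Y_M}w$ and invoking the Poincar\'e--Wirtinger inequality on the connected set $\hat Y_M$, one may moreover arrange
\[
\|\hat P w\|_{L^p(\hat Y)}\le c\,\|w\|_{L^p(\hat Y_M)},\qquad
\|\nabla\hat P w\|_{L^p(\hat Y)}\le c\,\|\nabla w\|_{L^p(\hat Y_M)},
\]
with $c=c(\hat Y,\hat Y_M,p)$; the crucial point is that the gradient bound no longer involves $\|w\|_{L^p}$.

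Next I would transfer this to each scaled cell. For $\xi\in\mathbb Z^2$ with $\ve(\hat Y+\xi)\subset(0,a_1)\times(0,a_2)$ define, for a.e.\ $x_3$,
\[
(\hat P^\ve_\xi w)(\hat x,x_3)=\big[\hat P\big(w(\ve(\,\cdot\,+\xi),x_3)\big)\big]\big(\ve^{-1}\hat x-\xi\big),
\]
and on the partial cells meeting $\partial\Omega$, where $\Omega^\ve_M$ coincides with the full slab and no fibre is present, simply set $\overline v^\ve=v^\ve$. Gluing the cellwise pieces yields a function on $\Omega$; it lies in $W^{1,p}(\Omega)$ because, $\hat P$ being the identity near $\partial\hat Y$, on every interface between adjacent cells (and between a fibre cell and the boundary layer) the extension equals $v^\ve$ itself, so no jump is created. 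A rescaling argument, using that both displayed estimates are invariant under the dilation $\hat x\mapsto\ve^{-1}\hat x$ (the Jacobian factors in the $L^p$ norms and the chain-rule factor in the gradient norm balance out), shows they hold on each cell with the same constant $c$.

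Finally I would sum over cells:
\[
\|\overline v^\ve\|_{L^p(\Omega)}^p=\sum_{\xi}\|\hat P^\ve_\xi v^\ve\|_{L^p(\ve(\hat Y+\xi)\times(0,a_3))}^p+\|v^\ve\|_{L^p(\text{boundary layer})}^p
\le c^p\sum_{\xi}\|v^\ve\|_{L^p(\ve(\hat Y_M+\xi)\times(0,a_3))}^p\le \mu_1^p\|v^\ve\|_{L^p(\Omega^\ve_M)}^p,
\]
and identically for the gradient, with $\mu_1=\mu_1(Y,Y_M,p)$, which is the asserted bound. The only genuine subtlety — and the step I would treat with care — is the $\ve$-uniformity of the constant: this is precisely why one first passes to the mean-value-corrected extension, making the gradient estimate homogeneous under the dilation; the plain reflection extension would otherwise contribute a term behaving like $\ve^{-1}\|v^\ve\|_{L^p(\Omega^\ve_M)}$, which would blow up as $\ve\to0$.
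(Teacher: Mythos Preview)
Your argument is correct and is precisely the classical cell-by-cell construction of Acerbi--Chiad\`o Piat--Dal Maso--Percivale and Cioranescu--Saint Jean Paulin that the paper invokes; the paper does not give its own proof but simply cites \cite{Acerbi,CiorPaulin99,MP} and notes in a remark that the fibres avoid $\Gamma_\cI\cup\Gamma_\cE\cup\Gamma_\cU$ so the classical results apply directly. Your write-up supplies the details the paper omits---the mean-value correction to get the homogeneous gradient bound, the identity-near-$\partial\hat Y$ property for gluing, and the scale-invariance under $\hat x\mapsto\ve^{-1}\hat x$---so the two approaches are essentially identical.
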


\noindent\textbf{Remark. } Notice that the microfibrils do not intersect the boundaries $\Gamma_\cI$, $\Gamma_\cU$,  and $\Gamma_{\cE}$, and near the boundaries $\partial \Omega \setminus(\Gamma_{\cI} \cup \Gamma_{\cE}\cup \Gamma_\cU)$ it is sufficient to extend $v^{\varepsilon}$ by reflection in the direction normal to the microfibrils and parallel to the boundary. Thus, classical extension results~\cite{Acerbi,CiorPaulin99, JH1991, MP} apply to $\Omega^\ve_M$.
%
\noindent In the sequel, we identify $\p^{\varepsilon}$ and $\n^\ve$ with their extensions.

First we show the well-possedness and a priori estimates for equations  \eqref{reactions}--\eqref{BC}  for a given $\bu^\ve \in L^\infty(0,T; \cW(\Omega))$. Next for a given $b^\ve$ we show the existence of a unique solution of the viscoelastic problem \eqref{visco2}. Then using the fact that the estimates for $b^\ve$ can be obtain   independently of $\bu^\ve$ and applying a fixed point argument we show the well-possedness of the coupled  system. 

\begin{lemma}\label{th:exist_1}
Under Assumption~\ref{assumptions} and for $\bu^\ve \in L^\infty(0,T; \cW(\Omega))$ such that 
\begin{equation}\label{estim_u_1}
\| \bu^\ve\|_{L^\infty(0,T;\cW(\Omega))} \leq C,
\end{equation}
where the constant $C$ is  independent of  $\ve$, 
 there exists a unique weak solution $(\p^\ve, \n^\ve, b^\ve)$ of the microscopic model \eqref{reactions}--\eqref{BC} satisfying 
$$ \p^\ve_j(t,x)\geq 0,  \; \; \n^\ve_j(t,x)\geq 0, \; \; b^\ve(t,x)\geq 0\qquad \text{ for a.a.}\  (t,x) \in (0,T)\times\Omega_M^\ve,  \quad j=1,2,$$
and the \textit{a priori} estimates
   \eqref{apriori_estim_1} and \eqref{apriori_estim}.
\end{lemma}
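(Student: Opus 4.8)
The plan is to establish Lemma~\ref{th:exist_1} by decoupling the system \eqref{reactions}--\eqref{BC}, solving each block by a fixed-point or Galerkin scheme, and then closing the loop with a Banach contraction argument on a suitable time interval, followed by a continuation argument up to $T$. Concretely, I would first fix $\bu^\ve$ satisfying \eqref{estim_u_1}, observe that $\mathcal N_\delta(\be(\bu^\ve))$ defined by \eqref{def_N_2} depends on $b^\ve$ through $\bbE^\ve(b^\ve)$, and handle this mild nonlinearity as part of the fixed point. To set up the iteration, I would take an arbitrary $\bar b \in C([0,T]; L^2(\Omega_M^\ve))$ with $0 \le \bar b \le C_1$, freeze it inside $\mathcal N_\delta$, solve the linear (in the principal part) parabolic system for $\p^\ve$ using the standard Galerkin method in $\cV(\Omega_M^\ve)$ — existence of $\partial_t\p^\ve \in L^2(0,T;\cV(\Omega_M^\ve)')$ follows from energy estimates plus the growth bounds in Assumption~\ref{assumptions}.2--3 — then insert $\p^\ve$ into the $\n^\ve$ equation and solve it the same way using Assumptions~\ref{assumptions}.4--7, and finally integrate the ODE \eqref{weak_sol_n2} for $b^\ve$ in time. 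This defines a map $\bar b \mapsto b^\ve$, and a contraction estimate on a short interval $[0,\tau]$, using Lipschitz continuity of $R_b$ and of $\mathcal N_\delta$ in $b$ (via $\bbE_M \in C^1$), yields a unique fixed point; since all a priori bounds below are uniform in time, $\tau$ can be chosen independent of the starting time and the solution extends to $[0,T]$.

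The non-negativity and the $L^\infty$ bounds I would obtain by the usual Stampacchia truncation / testing-with-negative-part technique: testing the $\p^\ve_j$ equation with $-(\p^\ve_j)^- = -\min(\p^\ve_j,0)$ and using the sign conditions $\F_{p,1}(0,\eta)=0$, $\F_{p,2}(\xi,0)=0$, $\J_{p,1}(0,\eta)\ge 0$, $\J_{p,2}(\xi,0)\ge 0$ from Assumptions~\ref{assumptions}.2--3 gives $\p^\ve_j \ge 0$, and analogously for $\n^\ve_j$ using Assumptions~\ref{assumptions}.4--7 and for $b^\ve$ using $R_b(\boldsymbol\xi,0,\zeta)\ge 0$. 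For the bound \eqref{apriori_estim_1}, the key point — flagged in the Remark after Assumption~\ref{assumptions} — is that $(R_b)^+ \le \beta_4$, so $\partial_t b^\ve \le \beta_4$ pointwise, hence $b^\ve(t) \le b_0 + \beta_4 t \le \|b_0\|_\infty + \beta_4 T =: C_1$ independently of $\ve$ and $\delta$; combined with $b^\ve \ge 0$ this gives the $L^\infty$ bound, and $(\partial_t b^\ve)^+ \le \beta_4$ gives the second term in \eqref{apriori_estim_1}. For the $L^\infty$ bounds on $\p^\ve,\n^\ve$ in \eqref{apriori_estim} I would run a Moser / Alikakos iteration (or bootstrap via testing with $(\p^\ve_j)^{2k-1}$), where the linear-in-$|\boldsymbol\xi|$, linear-in-$\eta$ growth of all reaction and boundary flux terms (Assumptions~\ref{assumptions}.2--7) is exactly what is needed; note the dependence on $\mathcal N_\delta(\be(\bu^\ve))$ enters only through the factor $(1+\zeta)$ with $\zeta = \mathcal N_\delta$, and $\|\mathcal N_\delta(\be(\bu^\ve))\|_{L^\infty}$ is controlled by $\|\be(\bu^\ve)\|_{L^2(\Omega)} \le C$ because the nonlocal average over $B_\delta$ turns an $L^2$ bound into an $L^\infty$ bound (with a $\delta$-dependent constant, which is fine here since we only claim independence of $\ve$ and $h$). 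The $H^1$ bounds $\|\nabla\p^\ve\|_{L^2}, \|\nabla\n^\ve\|_{L^2}$ follow from the standard energy estimate, using the extension Lemma~\ref{lem:extension} and the coercivity $d_\alpha>0$ in Assumption~\ref{assumptions}.1, once the $L^\infty$ bounds make the right-hand sides integrable.

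For the time-translation estimate $\|\theta^h\p^\ve - \p^\ve\|_{L^2(\Omega_{M,T-h}^\ve)} \le C_3 h^{1/4}$ (and similarly for $\n^\ve$), the classical argument is to write $\|\theta^h v - v\|_{L^2}^2 = \int_0^{T-h}\langle \partial_t v(\cdot), \theta^h v - v\rangle$-type integral and bound it by $\|\partial_t v\|_{L^2(0,T;\cV')}\,\|\theta^h v - v\|_{L^2(0,T;\cV)}$; interpolating $H^{1/2}$ between $L^2$ and $H^1$ and using the already-established bounds on $\partial_t v$ in $\cV'$ and on $v$ in $L^2(0,T;H^1)$ produces the exponent $1/4$. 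The main obstacle, as in \cite{PS}, is making every constant genuinely independent of $\ve$: this hinges on using the extension operator of Lemma~\ref{lem:extension} so that all Sobolev and trace inequalities are applied on the fixed domain $\Omega$ rather than the perforated $\Omega_M^\ve$, and on the observation (the real novelty here relative to \cite{PS}) that the bound for $b^\ve$ does not require any control of $\bu^\ve$ beyond \eqref{estim_u_1} because of the one-sided bound $(R_b)^+ \le \beta_4$; the $\ve$-uniformity of the trace terms on $\Gamma_\cI, \Gamma_\cE$ is standard since those boundaries are not perforated. Since the statement explicitly says the proof parallels \cite{PS}, I would present the above as a sketch, carry out in detail only the $b^\ve$ estimate \eqref{apriori_estim_1} and the modification of the fixed-point map needed to accommodate the $b^\ve$-dependence of $\mathcal N_\delta$, and refer to \cite{PS} for the remaining routine computations.
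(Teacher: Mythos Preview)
Your proposal is correct and follows essentially the same approach as the paper: existence via a fixed-point/Galerkin scheme referring to \cite{PS}, non-negativity by testing with negative parts, the key $b^\ve$ bound \eqref{apriori_estim_1} from $(R_b)^+\le\beta_4$ (exactly as the paper does in \eqref{bound_n_b}), Alikakos iteration for the $L^\infty$ bounds, and energy estimates for the gradients. The only minor discrepancy is the time-translation estimate: the paper obtains the $h^{1/4}$ by taking $\bphi_p=\int_t^{t+h}[\theta_h\p^\ve-\p^\ve]\,ds$ as a test function directly in \eqref{weak_sol_n1}, whereas your ``interpolating $H^{1/2}$'' description is imprecise---the $1/4$ actually comes from Cauchy--Schwarz in the inner time integral yielding $h^{1/2}$ on the squared norm---but the underlying mechanism is the same and you defer to \cite{PS} in any case.
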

\begin{proof}  The proof of this lemma  follows the same lines  as the proof of Theorem~3.3  in \cite{PS}.
The only difference is in the derivation of the estimates for $b^\ve$.  Using the non-negativity  of $\n_1^\ve$, $\n_2^\ve$, $b^\ve$,  and Assumptions~\ref{assumptions}.4~and~\ref{assumptions}.5 we obtain from the equation for $b^\ve$ 
\begin{equation}\label{bound_n_b}
\begin{aligned}
0 \leq b^\ve(t,x) &\leq  \|b_0\|_{L^\infty(\Omega)} + T \|(R_{b}(\n^\ve, b^\ve, \mathcal N_\delta(\be(\bu^\ve))))^+\|_{L^\infty(\Omega_{M,T}^\ve)} \leq C   && \text{for a.a. } (t,x) \in \Omega_{M,T}^\ve, \\
(\partial_t b^\ve (t,x))^{+}& \leq   \|(R_{b}(\n^\ve, b^\ve, \mathcal N_\delta(\be(\bu^\ve))))^+\|_{L^\infty(\Omega_{M,T}^\ve)} \leq \beta_4   && \text{for a.a. } (t,x) \in \Omega_{M,T}^\ve.
\end{aligned}
\end{equation}
Hence, the bounds for $ b^\ve$ and $(\partial_t b^\ve)^+$ are independent of the bound for  $\|\bu^\ve\|_{L^\infty(0,T; \mathcal{W}(\Omega))}$. This fact is important for the derivation of a priori estimates for $\bu^\ve$ and the fixed point argument for the proof of the existence of a solution for the coupled system. 

Using the equation for $b^\ve$, the definition of $\mathcal N_\delta$ and the estimates for $\|\n^\ve\|_{L^\infty(0,T; L^\infty(\Omega_M^\ve))}$, $\|b^\ve\|_{L^\infty(0,T; L^\infty(\Omega_M^\ve))}$,  and  $\| \bu^\ve\|_{L^\infty(0,T;\cW(\Omega))}$  we obtain the estimate for $\|\partial_t b^\ve\|_{L^\infty(0,T; L^\infty(\Omega_{M}^\ve))}$ uniformly in $\ve$.

Similar to  \cite{PS},  considering $\boldsymbol{\phi}_p=\int_t^{t+h} [\theta_h \p^\ve(s,x) - \p^\ve(s,x)] ds $ and $\bphi_n=\int_t^{t+h} [\theta_h \n^\ve(s,x) - \n^\ve(s,x) ]ds $ as test functions in \eqref{weak_sol_n1}, respectively, we obtain the last estimate in  \eqref{apriori_estim}. 
\end{proof}

%

Next we prove the existence, uniqueness and a priori estimates for  a solution of viscoelastic  equations  for a given   ${b}^\ve\in L^\infty(0,T; L^\infty(\Omega_M^\ve))$.

\begin{lemma}\label{th_2}
Under  Assumption~\ref{assumptions}  for a given ${b}^\ve\in L^\infty(0,T; L^\infty(\Omega_M^\ve))$, satisfying  
$$\| b^\ve\|_{L^\infty(0,T; L^\infty(\Omega_M^\ve)) }\ + \| ( \partial_t {b}^\ve)^{+} \|_{L^\infty(0,T; L^\infty(\Omega_M^\ve)) }\leq C, $$  there exists  a weak solution  of the degenerate viscoelastic equations \eqref{visco2} satisfying the \textit{a priori} estimates  \eqref{apriori_visco}.
\end{lemma}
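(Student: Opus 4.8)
The plan is to establish existence for the degenerate viscoelastic equations \eqref{visco2} by a regularization (perturbation) argument, since the viscosity tensor $\mathbb V^\ve$ vanishes on the microfibril part $\Omega_F^\ve$ and the equation is merely elliptic there in $\partial_t\be(\bu^\ve)$. First I would introduce, for a small parameter $\sigma>0$, the regularized viscous tensor $\mathbb V^\ve_\sigma(b^\ve,x)=\mathbb V^\ve(b^\ve,x)+\sigma\,\mathbb E^\ve(b^\ve,x)$ (or $\sigma$ times a fixed coercive tensor), which is uniformly coercive on all of $\Omega$. For fixed $\sigma$, the equation $\mathrm{div}(\mathbb E^\ve\be(\bu^\ve_\sigma)+\mathbb V^\ve_\sigma\partial_t\be(\bu^\ve_\sigma))=\mathbf 0$ with the given boundary and initial data becomes a standard linear parabolic-type problem for $\bu^\ve_\sigma$ in $\cW(\Omega)$: its weak formulation \eqref{weakvis2} (with $\mathbb V^\ve$ replaced by $\mathbb V^\ve_\sigma$) has a unique solution by Galerkin approximation, using a basis of $\cW(\Omega)$, the coercivity of $\mathbb V^\ve_\sigma$ on the strain (via Korn), and the regularity $\bff\in H^1(0,T;L^2)$, $p_\cI\in H^1(0,T;L^2)$, $\bu_0\in\cW(\Omega)$ from Assumption~\ref{assumptions}.10--11. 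Here the dependence of $\mathbb E^\ve,\mathbb V^\ve_\sigma$ on $t$ through $b^\ve$ is handled because $b^\ve\in L^\infty$ and $(\partial_t b^\ve)^+\in L^\infty$, which is what is needed to control the time-derivative of the bilinear form in the energy estimate.

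Next I would derive $\sigma$-uniform a priori estimates. Testing \eqref{weakvis2} with $\boldsymbol\psi=\partial_t\bu^\ve_\sigma$ (justified at the Galerkin level) gives, after integrating in time and using the major/minor symmetries of $\mathbb E_M,\mathbb E_F$,
\begin{equation*}
\tfrac12\langle\mathbb E^\ve(b^\ve)\be(\bu^\ve_\sigma),\be(\bu^\ve_\sigma)\rangle_\Omega(t)+\langle\mathbb V^\ve_\sigma(b^\ve)\partial_t\be(\bu^\ve_\sigma),\partial_t\be(\bu^\ve_\sigma)\rangle_{\Omega_t}=\text{(boundary terms)}+\tfrac12\langle\mathbb E^\ve(b^\ve)\be(\bu_0),\be(\bu_0)\rangle_\Omega+\tfrac12\int_0^t\langle\partial_s\mathbb E^\ve(b^\ve)\be(\bu^\ve_\sigma),\be(\bu^\ve_\sigma)\rangle_\Omega\,ds.
\end{equation*}
The crucial point, which is exactly why the Remark after the Assumptions insists that $(R_b)^+$ be bounded, is that $\partial_t\mathbb E^\ve(b^\ve,x)=\partial_\xi\mathbb E_M(b^\ve)\,\partial_t b^\ve\,\chi_{\hat Y_M}$ need only be bounded above in the sense that enters the Gronwall argument — one splits $\partial_t b^\ve$ into its positive and negative parts and uses $\omega_E|\be|^2\le\mathbb E^\ve\be\cdot\be$ together with $\|(\partial_t b^\ve)^+\|_{L^\infty}\le C$ to absorb/bound the problematic term, while the part multiplied by $(\partial_t b^\ve)^-$ has a sign that helps (or is harmless once one also uses $\|b^\ve\|_{L^\infty}\le C$ with Assumption~\ref{assumptions}.9). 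The boundary terms $\langle\bff,\partial_t\bu^\ve_\sigma\rangle_{\Gamma_{\cE\cU,T}}-\langle p_\cI\bnu,\partial_t\bu^\ve_\sigma\rangle_{\Gamma_{\cI,T}}$ are integrated by parts in time to move the $t$-derivative onto $\bff$ and $p_\cI$ (using $\bff,p_\cI\in H^1(0,T;L^2)$), producing terms controlled by $\|\be(\bu^\ve_\sigma)\|_{L^2}$ and a trace inequality. Gronwall's lemma then yields $\|\bu^\ve_\sigma\|_{L^\infty(0,T;\cW(\Omega))}+\|\sqrt{\mathbb V^\ve_\sigma}\,\partial_t\be(\bu^\ve_\sigma)\|_{L^2(\Omega_T)}\le C_2$ with $C_2$ independent of $\sigma$ and $\ve$; in particular $\|\partial_t\be(\bu^\ve_\sigma)\|_{L^2((0,T)\times\Omega_M^\ve)}\le C_2$ since $\mathbb V^\ve_\sigma\ge\mathbb V^\ve\ge\omega_V$ on $\Omega_M^\ve$.

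Finally I would pass to the limit $\sigma\to0$. From the uniform bounds, along a subsequence $\bu^\ve_\sigma\rightharpoonup\bu^\ve$ weakly-$*$ in $L^\infty(0,T;\cW(\Omega))$ and $\partial_t\be(\bu^\ve_\sigma)\rightharpoonup\partial_t\be(\bu^\ve)$ weakly in $L^2((0,T)\times\Omega_M^\ve)$; also $\sigma\langle\mathbb E^\ve(b^\ve)\partial_t\be(\bu^\ve_\sigma),\be(\boldsymbol\psi)\rangle_{\Omega_T}\to0$ because $\sqrt\sigma\,\partial_t\be(\bu^\ve_\sigma)$ is bounded in $L^2(\Omega_T)$ and the extra factor $\sqrt\sigma\to0$. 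Passing to the limit in \eqref{weakvis2} (with $\mathbb V^\ve_\sigma$) — all terms are linear in $\bu^\ve_\sigma$ and the coefficients $\mathbb E^\ve(b^\ve,x),\mathbb V^\ve(b^\ve,x)$ are fixed $L^\infty$ functions — gives that $\bu^\ve$ solves \eqref{weakvis2}, and weak lower semicontinuity of the norms transfers the estimate \eqref{apriori_visco} to $\bu^\ve$. The initial condition $\bu^\ve(0)=\bu_0$ in $\cW(\Omega)$ is recovered from the bound on $\partial_t\be(\bu^\ve)$ in $L^2((0,T)\times\Omega_M^\ve)$ together with the Galerkin construction (the map $t\mapsto\langle\mathbb E^\ve(b^\ve)\be(\bu^\ve)+\mathbb V^\ve\partial_t\be(\bu^\ve),\be(\boldsymbol\psi)\rangle$ being well-defined and the standard argument identifying the trace at $t=0$). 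The main obstacle is the energy estimate: controlling the term coming from the time-dependence of $\mathbb E^\ve$ through $b^\ve$ without any $L^2$-in-time control on $\partial_t\be(\bu^\ve)$ over the full domain $\Omega$ (only over $\Omega_M^\ve$), so the Gronwall argument must be set up to close on $\|\be(\bu^\ve_\sigma)\|_{L^\infty(0,T;L^2(\Omega))}$ alone, which is precisely why only the one-sided bound on $\partial_t b^\ve$ is available and sufficient. Uniqueness (not asserted in this lemma but used later) would follow from the same energy identity applied to the difference of two solutions; here I only claim existence as stated.
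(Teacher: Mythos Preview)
Your approach is correct and the core energy argument (test with $\partial_t\bu^\ve$, rewrite $\langle\mathbb E^\ve\be(\bu^\ve),\partial_t\be(\bu^\ve)\rangle$ as a time derivative minus $\tfrac12\langle\partial_t\mathbb E^\ve\be(\bu^\ve),\be(\bu^\ve)\rangle$, integrate the boundary terms by parts in $t$ using $\bff,p_\cI\in H^1(0,T;L^2)$, bound the commutator term via $\|(\partial_t b^\ve)^+\|_{L^\infty}$, and close with Gronwall) is exactly what the paper does; see the displayed estimate leading to \eqref{estim_u_t_vis}.

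The one genuine difference is your $\sigma$-regularization $\mathbb V^\ve_\sigma=\mathbb V^\ve+\sigma\,\mathbb E^\ve$. The paper skips this and applies Galerkin directly to the degenerate problem, then derives \eqref{estim_u_t_vis} at the Galerkin level. Your regularization buys a cleaner existence step: with $\mathbb V^\ve_\sigma$ coercive on all of $\Omega$, the finite-dimensional Galerkin system is a genuine ODE (the mass matrix $\langle\mathbb V^\ve_\sigma\be(\phi_k),\be(\phi_l)\rangle_\Omega$ is positive definite), whereas in the paper's direct approach this matrix may be singular because $\mathbb V^\ve$ vanishes on $\Omega_F^\ve$, so one is implicitly dealing with a differential-algebraic system. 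The paper's route is more concise; yours is more transparent about the degeneracy. The $\sigma\to 0$ passage you describe is straightforward and correct, since all estimates are $\sigma$-uniform and the perturbation term vanishes as $\sqrt\sigma\cdot\sqrt\sigma\,\|\partial_t\be(\bu^\ve_\sigma)\|_{L^2(\Omega_T)}\to 0$. In short: same energy estimate, same use of the one-sided bound on $\partial_t b^\ve$, but you wrap it in a regularization the paper does not use.
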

\begin{proof}
Using the estimates for $\bu^\ve$ and $\partial_t\bu^\ve$, similar to those in \eqref{estim_u_t_vis},  along with the  positive definiteness of $\mathbb E$ and $\mathbb V$, and applying the Galerkin method, yield the existence of  a weak solution of the problem~\eqref{visco2}. 

Considering $\partial_t \bu^\ve$ as a test function in \eqref{weakvis2} and using the non-negativity of $b^\ve$ and the assumptions on $\mathbb E$ and $\mathbb V$,  we obtain
\begin{eqnarray*}
\| \be(\bu^\ve)(\tau)\|^2_{L^2(\Omega)}  +   \|\partial_t \be(\bu^\ve) \|^2_{L^2(\Omega^\ve_{M,\tau})}  \leq  \langle (\partial_t b^\ve)^+ \mathbb E^\prime_{M}(b^\ve) \be(\bu^\ve), \be(\bu^\ve)  \rangle_{\Omega_{M,\tau}^\ve} + C_1 \| \be(\bu_0)\|^2_{L^2(\Omega)} \\
+  \langle \bff, \partial_t \bu^\ve \rangle_{\Gamma_{\cE,\tau}}   -   \langle p_{\cI} \bnu, \partial_t \bu^\ve \rangle_{\Gamma_{\cI,\tau}} 
\leq   \sigma  \| \be(\bu^\ve)(\tau) \|^2_{L^2(\Omega)}
 + C_2  \| \be(\bu^\ve) \|^2_{L^2(\Omega_{\tau})} 
 + C_\sigma \big[\| \partial_t \bff \|^2_{L^2(\Gamma_{\cE,\tau})}\\ + \| \partial_t p_\cI \|^2_{L^2(\Gamma_{\cI,\tau})}   +\|\bff(\tau) \|^2_{L^2(\Gamma_{\cE})} + \| p_\cI(\tau) \|^2_{L^2(\Gamma_{\cI})}+
  \|\bff(0) \|^2_{L^2(\Gamma_{\cE})} + \| p_\cI(0) \|^2_{L^2(\Gamma_{\cI})}\big]+ C_3
\end{eqnarray*}
for  $\tau \in [0,T]$. Choosing $\sigma$ sufficiently small, using    the  boundedness of $b^\ve$ and $(\partial_t b^\ve)^+$, independent of  $\ve$ and $\bu^\ve$,   and applying  Gronwall's inequality imply
\begin{equation}\label{estim_u_t_vis}
\| \be(\bu^\ve)\|^2_{L^\infty(0,T;L^2(\Omega))}  +\| \partial_t\be(\bu^\ve)\|^2_{L^2(\Omega_{M,T}^\ve)}   \leq C,
\end{equation}
with a constant $C$ independent of  $\ve$.  Then using  the second Korn inequality yields \eqref{apriori_visco}.
\end{proof}

Now applying a fixed point argument and using the results in Lemmas~\ref{th:exist_1}~and~\ref{th_2} we obtain the well-possedness result for the coupled system \eqref{visco2}--\eqref{BC}. 

\begin{proof}[Proof of Theorem~\ref{th_3}]
We have that for a given $\widetilde \bu^\ve \in L^\infty(0,T;\cW(\Omega))$, with $\|\widetilde \bu^\ve \|_{L^\infty(0,T;\cW(\Omega))} \leq C$,  Lemma~\ref{th:exist_1} implies the existence of a non-negative weak solution $(\p^\ve, \n^\ve, b^\ve)$  of the problem  \eqref{reactions}--\eqref{BC}, where the estimates for $\|b^\ve\|_{L^\infty(0,T; L^\infty(\Omega_M^\ve))}$ and $\|(\partial_t b^\ve)^{+}\|_{L^\infty(0,T; L^\infty(\Omega_M^\ve))}$ are independent of $\widetilde \bu^\ve$.  Then for $b^\ve$  from Lemma~\ref{th_2} we have a solution $\bu^\ve$ of \eqref{visco2}.

We define $\mathcal K: L^\infty(0,  T; \cW(\Omega))\to   L^\infty(0,  T, \cW(\Omega))$ by $\mathcal K(\widetilde \bu^\ve)=\bu^\ve$, where $\bu^\ve$ is a solution of  \eqref{visco2}  for $b^\ve$ which is a solution of   \eqref{reactions}--\eqref{BC} with $\widetilde \bu^\ve$ instead of  $\bu^\ve$, and  show that for sufficiently small $\widetilde T \in (0,T]$, the operator $\mathcal K: L^\infty(0,  \widetilde T; \cW(\Omega))\to   L^\infty(0,  \widetilde T, \cW(\Omega))$ is a contraction, i.e.\
$$
\|\mathcal K(\widetilde \bu_1^\ve) - \mathcal K(\widetilde \bu_2^\ve)\|_{ L^\infty(0,  \widetilde T; \cW(\Omega))} 
\leq \gamma \| \bu_1^\ve -  \bu_2^\ve\|_{L^\infty(0,  \widetilde T; \cW(\Omega))}, \quad \text{ for some } \; \; 0< \gamma <1.
$$

 Considering the difference of  equation  \eqref{weakvis2} for $b^{\ve, 1}$ and $b^{\ve, 2}$,  and taking  $\partial_t(\bu^{\ve,1}-\bu^{\ve,2})$ as a test function  yield 
\begin{equation*}
\begin{aligned}
\langle \bbE^\ve(b^{\ve,1},x)\be(\bu^{\ve,1}-\bu^{\ve,2}), \partial_t\be(\bu^{\ve,1}-\bu^{\ve,2})\rangle_{\Omega} + 
\langle \mathbb V^\ve(b^\ve_{1},x)\partial_t\be(\bu^{\ve,1}-\bu^{\ve,2}),\partial_t \be(\bu^{\ve,1}-\bu^{\ve,2})\rangle_{\Omega}
\\=\langle(\bbE^\ve_M(b^{\ve,1},x)-\bbE^\ve_M(b^{\ve,2},x))\be(\bu^\ve_2),\partial_t  \be(\bu^{\ve,1}-\bu^{\ve,2})\rangle_{\Omega_M^\ve}
\\+ 
\langle(\mathbb V^\ve_M(b^{\ve,1},x)-\mathbb V^\ve_M(b^{\ve, 2},x))\partial_t\be(\bu^{\ve,2}), \partial_t\be(\bu^{\ve,1}-\bu^{\ve,2})\rangle_{\Omega_M^\ve}
\end{aligned}
\end{equation*}
for $t\in (0,T]$.  By the assumptions  on $\bbE^\ve(b^{\ve,1},x)$ and $\mathbb V^\ve(b^{\ve,1},x)$, we have
\begin{align*}
\|\be(\bu^{\ve,1}(\tau))-\be(\bu^{\ve,2}(\tau))\|_{L^2(\Omega)}^2 &\leq
  C_1 \|(\partial_t b^{\ve,1})^+\|_{L^\infty(0, T; L^\infty(\Omega^\ve_{M}))}\int_0^\tau \|\be(\bu^{\ve,1}-\bu^{\ve,2})\|_{L^2(\Omega_M^\ve)}^2   d\tau
\\ & \qquad
+  C_2 \|\be(\bu^{\ve,2})\|^2_{H^1(0,T; L^2(\Omega_M^\ve))}\|b^{\ve,1}- b^{\ve,2}\|_{ L^\infty(0,\tau; L^\infty(\Omega^\ve_{M}))}^2.
\end{align*}
Applying the Gronwall inequality and the estimates for $\partial_t b^{\ve,1}$ and $\be(\bu^{\ve,2})$ implies
$$
\|\be(\bu^{\ve,1})-\be(\bu^{\ve,2})\|_{L^\infty(0,\tilde T; L^2 (\Omega))}^2 \leq C_3 \|b^{\ve,1}- b^{\ve, 2}\|_{L^\infty(0, \widetilde T; L^\infty(\Omega^\ve_{M}))}^2
$$
for  $\widetilde T \in (0, T]$. 

Now we shall estimate   $\|b^{\ve,1}- b^{\ve,2}\|_{L^\infty(0, \widetilde T; L^\infty(\Omega^\ve_{M}))}$  in terms of 
$\widetilde T\|\be(\bu^{\ve,1})-\be(\bu^{\ve,2})\|_{L^\infty(0,\widetilde T; L^2 (\Omega))}$. 
Following the same calculations as in  \cite{PS}, by taking $\bphi_n= |\n^{\ve,1} - \n^{\ve,2}|^{p-2}(\n^{\ve,1} - \n^{\ve,2})$, where $p =2^\kappa$ and $\kappa=1,2,3,\ldots$,  as test functions in the  differences of the equations for $\n^{\ve,1}$ and $\n^{\ve,2}$,   and applying iterations in $p$ similar to  Lemma~3.2 in Alikakos \cite{Alikakos}, we obtain  
\begin{equation}\label{estim_contr}
\|b^{\ve,1}-b^{\ve,2}\|_{L^\infty(0, \widetilde T; L^\infty(\Omega^\ve_M))}^2 \leq C_4 \widetilde T\|\be (\widetilde \bu^{\ve,1}-  \widetilde \bu^{\ve,2})\|_{L^\infty(0,\widetilde T; L^2(\Omega))}^2
\end{equation}
for  $\widetilde T \in (0,T]$.  
Thus, we have that the operator $\mathcal K: L^\infty(0,\widetilde T; \cW(\Omega)) \to L^\infty(0, \widetilde T; \cW(\Omega))$, defined by $\mathcal K(\widetilde \bu^\ve) = \bu^\ve$, where $\bu^\ve$ is a weak solution of   \eqref{visco2},  is a contraction for sufficiently small $\tilde T$, where $\widetilde T$ depend on the coefficients in the equations and is independent of $(\p^\ve, \n^\ve, b^\ve, \bu^\ve)$. 
Hence, using the Banach fixed point theorem and  iterating over time intervals, we obtain the existence of a unique weak solution of the microscopic problem~\eqref{visco2}--\eqref{BC}. 
\end{proof}

\section{Derivation of the macroscopic equations of the problem \eqref{visco2}-\eqref{BC}:  Proof of  Theorem \ref{viscoelastic}.}\label{secthom}

%

Due to the fact that viscous term is defined only in the cell wall matrix and is zero for cell wall microfibrils, to conduct the multiscale analysis of the viscoelastic problem \eqref{visco2} we first consider  a perturbed problem by adding the inertial term 
$\vartheta \partial_t^2 \bu^\ve$, where $\vartheta>0$ is a small perturbation parameter:
\begin{equation}\label{perturb_eq}
\vartheta \chi_{\Omega_M^\ve} \partial_t^2 \bu^\ve=\text{div}(\mathbb E^\ve(b^\ve,x)\be(\bu^\ve) + \mathbb V^\ve(b^\ve,x)\partial_t\be(\bu^\ve)) \; \; \quad \text{on}\;  \Omega_T,
\end{equation}
 and the additional  initial condition 
 \begin{equation}\label{in_perturbed}
 \partial_t \bu^\ve (0,x)={\bf 0} \quad \text{  in } \quad  \Omega.
 \end{equation} 
 
 We split the proof of  Theorem \ref{viscoelastic} into two steps. First we derive the macroscopic equations for the perturbed system. Then letting the perturbation parameter $\vartheta$  go to  zero we obtain the macroscopic equations \eqref{macro_vis} for the original degenerate viscoelastic problem.

 \begin{lemma} \label{lemma_exist}
 There exists a unique solution of the perturbed microscopic problem \eqref{reactions}, \eqref{BC} and  \eqref{perturb_eq}, together with the initial and boundary conditions in \eqref{visco2} and \eqref{in_perturbed}, satisfying the {\it a priori} estimates 
 \begin{eqnarray}\label{apriori_visco_2}
\begin{aligned}
& \vartheta^{\frac 12}\|\partial_t \bu^\ve\|_{L^\infty(0,T; L^2(\Omega_M^\ve))}+  \|\bu^\ve\|_{L^\infty(0,T; \cW(\Omega))}  + \|\partial_t \be(\bu^\ve)\|_{L^2(\Omega_{M,T}^\ve)} \leq C,
  \end{aligned}
\end{eqnarray}
 \begin{eqnarray}\label{apriori_visco_3}
\begin{aligned}
&\|\p^\ve\|_{L^\infty(0,T; L^\infty(\Omega_{M}^\ve))} + \|\nabla \p^\ve\|_{L^2(\Omega_{M,T}^\ve)}+ \|\n^\ve\|_{L^\infty(0,T; L^\infty(\Omega_{M}^\ve))} + \|\nabla \n^\ve\|_{L^2(\Omega_{M,T}^\ve)} \leq C,  \\
& \|b^\ve\|_{L^\infty(0,T;L^\infty(\Omega_{M}^\ve))}  +   \|\partial_t b^\ve\|_{L^\infty(0,T; L^\infty(\Omega_{M}^\ve))}\leq C,  
  \end{aligned}
\end{eqnarray}
 with  a constant $C$ independent of $\ve$ and $\vartheta$, and 
 \begin{eqnarray}\label{apriori_visco_4}
\begin{aligned}
&  \|\theta_h \p^\ve - \p^\ve\|_{L^2(\Omega_{M,T-h}^\ve)}+ \|\theta_h \n^\ve - \n^\ve\|_{L^2(\Omega_{M,T-h}^\ve)} \leq Ch^{1/4}, 
  \end{aligned}
\end{eqnarray}
where  $\theta_h v(t,x)=v(t+h, x)$ for a.e.\ $(t,x)\in \Omega_{M,T-h}^\ve$, and the constant  $C$ is independent of $\ve$ and $\vartheta$.
 \end{lemma}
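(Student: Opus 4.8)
The plan is to prove Lemma~\ref{lemma_exist} in two layers: first establish existence and uniqueness for the perturbed coupled system, then derive the $\vartheta$-uniform \textit{a priori} estimates. For existence I would repeat the fixed-point scheme already used in the proof of Theorem~\ref{th_3}: for a given $\widetilde\bu^\ve\in L^\infty(0,T;\cW(\Omega))$ (with the appropriate bound) Lemma~\ref{th:exist_1} gives a nonnegative solution $(\p^\ve,\n^\ve,b^\ve)$ of \eqref{reactions}--\eqref{BC} with the bounds on $b^\ve$ and $(\partial_t b^\ve)^+$ independent of $\widetilde\bu^\ve$; then, for this $b^\ve$, the perturbed hyperbolic-parabolic equation \eqref{perturb_eq} with initial data \eqref{in_perturbed} is solved by the Galerkin method. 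Since the viscous tensor $\mathbb V^\ve$ degenerates outside $\Omega_M^\ve$, the inertial term $\vartheta\chi_{\Omega_M^\ve}\partial_t^2\bu^\ve$ is itself degenerate, so the natural energy identity (testing with $\partial_t\bu^\ve$) controls $\vartheta^{1/2}\|\partial_t\bu^\ve\|_{L^\infty(0,T;L^2(\Omega_M^\ve))}$, $\|\be(\bu^\ve)\|_{L^\infty(0,T;L^2(\Omega))}$, and $\|\partial_t\be(\bu^\ve)\|_{L^2(\Omega_{M,T}^\ve)}$; the degeneracy of the second-order term is harmless because the viscous dissipation is present precisely where the inertia is, which is exactly why the perturbation is chosen this way. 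The contraction estimate for the map $\mathcal K:\widetilde\bu^\ve\mapsto\bu^\ve$ goes through verbatim as in the proof of Theorem~\ref{th_3}: the difference equation for two deformations is tested with $\partial_t(\bu^{\ve,1}-\bu^{\ve,2})$, Gronwall is applied, and then \eqref{estim_contr} bounds $\|b^{\ve,1}-b^{\ve,2}\|$ by $\widetilde T^{1/2}\|\be(\widetilde\bu^{\ve,1}-\widetilde\bu^{\ve,2})\|$; the extra inertial term only adds a nonnegative quantity $\tfrac{\vartheta}{2}\|\partial_t(\bu^{\ve,1}-\bu^{\ve,2})(\tau)\|_{L^2(\Omega_M^\ve)}^2$ to the left side, so it cannot spoil the contraction, which again holds on a short interval $\widetilde T$ independent of $\vartheta$. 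Iterating over time intervals yields the unique weak solution.

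For the estimates \eqref{apriori_visco_3} and \eqref{apriori_visco_4} there is essentially nothing new: the bounds on $\p^\ve,\n^\ve,b^\ve$ from Lemmas~\ref{th:exist_1} follow from \eqref{reactions}--\eqref{BC} alone and are independent of the deformation equation, hence independent of $\vartheta$; in particular the crucial bounds \eqref{apriori_estim_1} on $b^\ve$ and $(\partial_t b^\ve)^+$ use only Assumption~\ref{assumptions}.5 (boundedness of $(R_b)^+$) and therefore hold uniformly. The time-translation estimate \eqref{apriori_visco_4} is obtained exactly as the last line of \eqref{apriori_estim}, by testing the equations for $\p^\ve$ and $\n^\ve$ with $\int_t^{t+h}[\theta_h\p^\ve-\p^\ve]\,ds$ and $\int_t^{t+h}[\theta_h\n^\ve-\n^\ve]\,ds$ and invoking the $\vartheta$-independent bound on $\mathcal N_\delta(\be(\bu^\ve))$ coming from the first two terms of \eqref{apriori_visco_2}.

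The real content is the energy estimate \eqref{apriori_visco_2}, and I would obtain it by testing \eqref{perturb_eq} (in weak form) with $\partial_t\bu^\ve$. This produces
\[
\tfrac{\vartheta}{2}\frac{d}{dt}\|\partial_t\bu^\ve\|_{L^2(\Omega_M^\ve)}^2
+\tfrac12\frac{d}{dt}\langle\mathbb E^\ve(b^\ve,x)\be(\bu^\ve),\be(\bu^\ve)\rangle_\Omega
+\langle\mathbb V^\ve(b^\ve,x)\partial_t\be(\bu^\ve),\partial_t\be(\bu^\ve)\rangle_{\Omega_M^\ve}
=\tfrac12\langle(\partial_t b^\ve)\mathbb E^{\ve\prime}_M(b^\ve,x)\be(\bu^\ve),\be(\bu^\ve)\rangle_{\Omega_M^\ve}
+\langle\bff,\partial_t\bu^\ve\rangle_{\Gamma_{\cE\cU,T}}
-\langle p_\cI\bnu,\partial_t\bu^\ve\rangle_{\Gamma_{\cI,T}},
\]
and the only delicate term is $\langle(\partial_t b^\ve)\mathbb E^{\ve\prime}_M(b^\ve,x)\be(\bu^\ve),\be(\bu^\ve)\rangle$: bounding $\partial_t b^\ve$ merely in $L^\infty$ is not good enough to make the argument uniform in $\vartheta$ without care, but replacing $\partial_t b^\ve$ by $(\partial_t b^\ve)^+$ after a sign analysis of $\mathbb E^{\ve\prime}_M$ — or, more simply, using the $\vartheta$-independent $L^\infty$-bound on $\partial_t b^\ve$ of Assumption~\ref{assumptions}.9-type together with the boundedness of $\mathbb E_M$ and of $\mathbb E_M'$ — lets one absorb it as $C\|\be(\bu^\ve)\|_{L^2(\Omega)}^2$. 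The boundary terms are handled by integrating by parts in time as in the proof of Lemma~\ref{th_2} (moving $\partial_t$ off $\bu^\ve$ and onto $\bff$, $p_\cI$, using Assumption~\ref{assumptions}.11 and the trace inequality $\|\bu^\ve\|_{L^2(\partial\Omega)}\le C\|\bu^\ve\|_{\cW(\Omega)}$), choosing the small parameter to absorb $\sigma\|\be(\bu^\ve)(\tau)\|_{L^2(\Omega)}^2$ into the left side. Integrating in $t$, using $\partial_t\bu^\ve(0)=\bf 0$, $\bu^\ve(0)=\bu_0\in\cW(\Omega)$, the coercivity of $\mathbb E^\ve$ and $\mathbb V^\ve$ (Assumptions~\ref{assumptions}.8--9), and Gronwall's inequality gives \eqref{apriori_visco_2} with $C$ independent of both $\ve$ and $\vartheta$; the second Korn inequality then upgrades the strain bound to the full $\cW(\Omega)$-norm bound. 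I expect the main obstacle to be precisely the uniform-in-$\vartheta$ control of the $(\partial_t b^\ve)\mathbb E'_M$ term, which is exactly the reason Assumption~\ref{assumptions}.9 (boundedness of $\mathbb E_M$, hence of stresses) and the $\vartheta$-independent bound on $\partial_t b^\ve$ were imposed, as noted in the Remark following Assumption~\ref{assumptions}.
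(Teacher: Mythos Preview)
Your proposal is correct and follows the paper's proof essentially step for step: the fixed-point scheme of Theorem~\ref{th_3} with the extra nonnegative inertial contribution on the left, the energy identity obtained by testing with $\partial_t\bu^\ve$, and the Gronwall argument. One small correction on the logical order: the $\p^\ve,\n^\ve$ bounds and the full $\|\partial_t b^\ve\|_{L^\infty}$ bound are \emph{not} independent of the deformation equation (they involve $\mathcal N_\delta(\be(\bu^\ve))$); only $\|b^\ve\|_{L^\infty}$ and $\|(\partial_t b^\ve)^+\|_{L^\infty}$ are, so in the energy estimate for \eqref{apriori_visco_2} you must use your first option---the $(\partial_t b^\ve)^+$ bound, exactly as in Lemma~\ref{th_2}---since your second option (a $\vartheta$-independent bound on all of $\partial_t b^\ve$) is circular at that stage.
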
 
 
 \begin{proof}   
 For a given $\bu^\ve \in L^\infty(0,T; \cW(\Omega))$, with $\|\bu^\ve\|_{L^\infty(0,T; \cW(\Omega))}\leq C$,
  in the same way as in Lemma~\ref{th:exist_1}  we obtain the existence of a unique solution of the  problem \eqref{reactions}--\eqref{BC},   satisfying the  a priori  estimates \eqref{apriori_visco_3}.  Notice that the estimates for $b^\ve$ and $(\partial_t  b^\ve)^{+} $ are independent of  $\bu^\ve$, $\ve$,  and $\vartheta$.
  
  Then for  $b^\ve \in L^\infty(0,T; L^\infty(\Omega_M^\ve))$, with $\| b^\ve \|_{L^\infty(0,T; L^\infty(\Omega_M^\ve))}\leq C$ and  $ |(\partial_t b^\ve)^{+} \|_{L^\infty(0,T; L^\infty(\Omega_M^\ve))} \leq C$, similar to Lemma~\ref{th_2}, we obtain the existence of a weak solution of the perturbed equations \eqref{perturb_eq} with initial and boundary conditions in   \eqref{visco2} and \eqref{in_perturbed},  satisfying the   a priori estimates \eqref{apriori_visco_2}. 

Similar to the proof of Theorem~\ref{th_3},   considering the difference of the equations \eqref{perturb_eq} for $b^{\ve, j}$, with $j=1,2$,  and taking  $\partial_t(\bu^{\ve,1}-\bu^{\ve,2})$ as a test function  yield 
\begin{equation*}
\begin{aligned}
&\frac 12  \vartheta \|\partial_t (\bu^{\ve,1}(\tau)- \bu^{\ve,2}(\tau))\|^2_{L^2(\Omega_M^\ve)} +  \langle \bbE^\ve(b^{\ve,1},x)\be(\bu^{\ve,1}-\bu^{\ve,2}), \partial_t\be(\bu^{\ve,1}-\bu^{\ve,2})\rangle_{\Omega_\tau} \\
& + 
\langle \mathbb V^\ve(b^{\ve,1},x)\partial_t\be(\bu^{\ve,1}-\bu^{\ve,2}),\partial_t \be(\bu^{\ve,1}-\bu^{\ve,2})\rangle_{\Omega_\tau}
=\langle(\bbE^\ve_M(b^{\ve,1},x)-\bbE^\ve_M(b^{\ve,2},x))\be(\bu^{\ve, 2}),\partial_t  \be(\bu^{\ve,1}-\bu^{\ve, 2})\rangle_{\Omega_{M,\tau}^\ve}
\\
& + 
\langle(\mathbb V^\ve_M(b^{\ve,1},x)-\mathbb V^\ve_M(b^{\ve,2},x))\partial_t\be(\bu^{\ve,2}), \partial_t\be(\bu^{\ve,1}-\bu^{\ve,2})\rangle_{\Omega_{M,\tau}^\ve}
\end{aligned}
\end{equation*}
for  $\tau\in (0,T]$.  By the assumptions  on $\bbE^\ve(b^{\ve,1},x)$ and $\mathbb V^\ve(b^{\ve,1},x)$, 
and 
applying the Gronwall inequality and the estimates for $\partial_t b^{\ve,1}$ and $\be(\bu^{\ve, 2})$ we obtain 
\begin{equation}\label{estim_diff_pert}
\|\be(\bu^{\ve, 1})-\be(\bu^{\ve, 2})\|_{L^\infty(0,\widetilde T; L^2 (\Omega))}^2 \leq C_3 \|b^{\ve,1}- b^{\ve,2}\|_{L^\infty(0, \widetilde T; L^\infty(\Omega^\ve_{M}))}^2
\end{equation}
for all $\widetilde T \in (0, T]$. 
Then, using the estimates \eqref{estim_contr},  \eqref{apriori_visco_2} and  \eqref{estim_diff_pert}, and the a priori estimates for $\p^\ve$, $\n^\ve$, and $b^\ve$ in the same way as in the proof of Theorem~\ref{th_3}  we obtain the existence of a unique weak  solution of the perturbed problem \eqref{reactions}, \eqref{BC}, and \eqref{perturb_eq} with  initial and boundary conditions in \eqref{visco2}  and   \eqref{in_perturbed}. 
\end{proof}

\begin{lemma}\label{converg_vartheta}
There exist functions $\p^\vartheta, \n^\vartheta \in L^2(0,T; \cV(\Omega))\cap L^\infty(0,T; L^\infty(\Omega))^2$, $\hat\p^\vartheta, \hat \n^\vartheta \in L^2(\Omega_T; H^1_{\rm per}(\hat Y) / \mathbb R)^2$ and $b^\vartheta\in W^{1,\infty}(0,T; L^\infty(\Omega))$, $\bu^\vartheta \in H^1(0,T; \cW(\Omega))$, $\hat \bu^\vartheta \in L^2(\Omega_T; H^1_{\rm per}(\hat Y) / \mathbb R)^3$,  $\partial_t \hat \bu^\vartheta \in L^2(\Omega_T; H^1_{\rm per}(\hat Y_M) / \mathbb R)^3$ such that for a subsequence of solutions  $(\p^\ve, \n^\ve, b^\ve, \bu^\ve)$ 
of the microscopic problem  \eqref{reactions}, \eqref{BC}, and \eqref{perturb_eq},  with initial and boundary conditions in  \eqref{visco2} and \eqref{in_perturbed},  $($denoted again by $(\p^\ve, \n^\ve, b^\ve, \bu^\ve))$ we have the following convergence results: 

\begin{equation}
\begin{aligned}
&\p^\ve \rightharpoonup \p^\vartheta, \quad \n^\ve \rightharpoonup \n^\vartheta && \text{ weakly in } L^2(0,T; H^1(\Omega)), \\
&\p^\ve \to \p^\vartheta, \quad \n^\ve \to \n^\vartheta && \text{ strongly in } L^2(\Omega_T), \\
&\nabla \p^\ve \rightharpoonup \nabla \p^\vartheta +\hat \nabla_{ y}  \hat \p^\vartheta, \quad \nabla \n^\ve \rightharpoonup \nabla \n^\vartheta +\hat  \nabla_{y} \hat \n^\vartheta  && \text{ two-scale}, \\
&b^\ve \rightharpoonup b^\vartheta, \quad \partial_t b^\ve \rightharpoonup \partial_t b^\vartheta  &&  \text{ two-scale}, \\
&\mathcal T^\ast_\ve(b^\ve) \to b^\vartheta \quad && \text{ strongly in } L^2(\Omega \times Y_M) , \\
& \bu^\ve \rightharpoonup \bu^\vartheta && \text{ weakly in } L^2(0,T; \cW(\Omega)), \\
&\nabla  \bu^\ve \rightharpoonup \nabla \bu^\vartheta +\hat  \nabla_{y}  \hat \bu^\vartheta && \text{ two-scale}, \\
&\chi_{\Omega_M^\ve} \nabla \partial_t \bu^\ve \rightharpoonup \chi_{\hat Y_M}(\nabla \partial_t \bu^\vartheta +\hat  \nabla_{y} \partial_t  \hat \bu^\vartheta) 
&& \text{ two-scale}.
\end{aligned}
\end{equation}
\end{lemma}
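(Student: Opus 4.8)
\textbf{Proof strategy for Lemma~\ref{converg_vartheta}.}
The plan is to obtain the listed convergences by combining the uniform-in-$\ve$ bounds of Lemma~\ref{lemma_exist} with the standard compactness machinery of two-scale convergence and the periodic unfolding operator $\mathcal T^\ast_\ve$. First, from the $\varepsilon$-independent estimates \eqref{apriori_visco_3} for $\p^\ve,\n^\ve$ in $L^2(0,T;H^1(\Omega_M^\ve))$ (after identifying them with the $H^1(\Omega)$-extensions of Lemma~\ref{lem:extension}) I extract weakly convergent subsequences in $L^2(0,T;H^1(\Omega))$, and the two-scale limits of the gradients have the classical form $\nabla\p^\vartheta+\hat\nabla_y\hat\p^\vartheta$ with correctors $\hat\p^\vartheta,\hat\n^\vartheta\in L^2(\Omega_T;H^1_{\rm per}(\hat Y)/\mathbb R)$ (only the $\hat y=(y_1,y_2)$ directions appear, matching the $\hat\nabla_y$ notation). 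Strong $L^2(\Omega_T)$ convergence of $\p^\ve,\n^\ve$ follows from the translation estimate \eqref{apriori_visco_4} together with the spatial $H^1$ bound, via a Kolmogorov--Riesz / Aubin--Lions-type argument (as in \cite{PS}).

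For $b^\ve$: the bounds \eqref{apriori_visco_3} on $b^\ve$ and $\partial_t b^\ve$ in $L^\infty(0,T;L^\infty(\Omega_M^\ve))$ give, after unfolding, boundedness of $\mathcal T^\ast_\ve(b^\ve)$ and $\mathcal T^\ast_\ve(\partial_t b^\ve)=\partial_t\mathcal T^\ast_\ve(b^\ve)$ in $L^\infty(\Omega_T\times Y_M)$; by Aubin--Lions on $\Omega\times Y_M$ (no $y$-derivatives needed, but the uniform time-derivative bound suffices since the spatial domain is now fixed) I get a strongly convergent subsequence $\mathcal T^\ast_\ve(b^\ve)\to b^\vartheta$ in $L^2(\Omega\times Y_M)$ with $b^\vartheta\in W^{1,\infty}(0,T;L^\infty(\Omega))$; the equivalence of unfolding and two-scale convergence then yields $b^\ve\rightharpoonup b^\vartheta$ and $\partial_t b^\ve\rightharpoonup\partial_t b^\vartheta$ two-scale. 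One checks $b^\vartheta$ is independent of $y$ because the unfolded sequence has no $y$-oscillation in the limit (a consequence of the $L^\infty$ bound on $\partial_t\mathcal T^\ast_\ve(b^\ve)$ and an argument showing the $y$-gradient of the unfolded limit vanishes, or more simply by noting $b^\vartheta$ is the two-scale limit of a sequence bounded in $L^\infty$ with no corrector term available).

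For the displacement: the estimate \eqref{apriori_visco_2} gives $\bu^\ve$ bounded in $L^\infty(0,T;\cW(\Omega))$, hence (up to a subsequence) $\bu^\ve\rightharpoonup\bu^\vartheta$ weakly$-\ast$ in $L^\infty(0,T;\cW(\Omega))$ and weakly in $L^2(0,T;\cW(\Omega))$, and by the standard two-scale compactness for gradients, $\nabla\bu^\ve\rightharpoonup\nabla\bu^\vartheta+\hat\nabla_y\hat\bu^\vartheta$ with $\hat\bu^\vartheta\in L^2(\Omega_T;H^1_{\rm per}(\hat Y)/\mathbb R)^3$. The bound $\|\partial_t\be(\bu^\ve)\|_{L^2(\Omega_{M,T}^\ve)}\le C$ gives $\partial_t\bu^\ve$ bounded in $L^2(0,T;H^1(\Omega_M^\ve))$; together with the time-integral control of $\bu^\ve$ this yields $\bu^\vartheta\in H^1(0,T;\cW(\Omega))$ and, for the restricted sequence on the matrix, a two-scale limit $\chi_{\Omega_M^\ve}\nabla\partial_t\bu^\ve\rightharpoonup\chi_{\hat Y_M}(\nabla\partial_t\bu^\vartheta+\hat\nabla_y\partial_t\hat\bu^\vartheta)$ with $\partial_t\hat\bu^\vartheta\in L^2(\Omega_T;H^1_{\rm per}(\hat Y_M)/\mathbb R)^3$; here one must be careful that the corrector for $\partial_t\bu^\ve$ need only be defined on $\hat Y_M$ since no viscous control is available on $\hat Y_F$, and one identifies $\partial_t$ of the two-scale limit of $\nabla\bu^\ve$ restricted to the matrix with the two-scale limit of $\chi_{\Omega_M^\ve}\nabla\partial_t\bu^\ve$ by passing to the limit in the weak time-derivative (testing against $y$-periodic oscillating test functions and using that time differentiation commutes with the spatial two-scale limit).

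The main obstacle is precisely this last identification: because the viscous tensor degenerates on the fibres, one only has an $L^2$-in-time bound on $\partial_t\be(\bu^\ve)$ over $\Omega_M^\ve$, not over all of $\Omega$, so the corrector $\hat\bu^\vartheta$ lives in $H^1_{\rm per}(\hat Y)$ but $\partial_t\hat\bu^\vartheta$ only makes sense in $H^1_{\rm per}(\hat Y_M)$, and one must reconcile these two facts — showing that $\partial_t$ of the $\hat Y$-corrector, restricted to $\hat Y_M$, coincides with the separately-obtained $\hat Y_M$-corrector for $\partial_t\bu^\ve$. This is handled by the unfolding operator: $\mathcal T^\ast_\ve(\partial_t\bu^\ve)=\partial_t\mathcal T^\ast_\ve(\bu^\ve)$ on $\Omega\times Y_M$ as an identity in $L^2$, so passing to the limit in $\mathcal T^\ast_\ve(\bu^\ve)$ (strong in $L^2(\Omega_T;H^1(\hat Y))$ after extracting) and in its time derivative simultaneously gives the matching correctors for free; the remaining care is that the extension of $\bu^\ve$ plays no role here since $\bu^\ve$ is already globally $H^1(\Omega)$, and the $\chi_{\Omega_M^\ve}$ factor simply restricts the unfolding domain. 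I would also note that all the above extractions are along a single common subsequence, obtained by a diagonal argument over the finitely many weak/two-scale limits required.
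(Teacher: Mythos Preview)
The principal gap in your proposal is the strong convergence of $\mathcal T^\ast_\ve(b^\ve)$ in $L^2(\Omega_T\times Y_M)$. Your argument says, in effect: $\mathcal T^\ast_\ve(b^\ve)$ and $\partial_t\mathcal T^\ast_\ve(b^\ve)$ are bounded in $L^\infty(\Omega_T\times Y_M)$, so Aubin--Lions gives a strongly convergent subsequence because ``the spatial domain is now fixed''. This is false: Aubin--Lions requires a \emph{compact} spatial embedding, and you have none --- $b^\ve$ satisfies only an ODE in time, with no diffusion, so no spatial regularity is available a priori. A bounded sequence in $L^\infty$ with bounded time derivative need not have any strongly convergent subsequence in $L^2$ (think of time-independent spatial oscillations). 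For the same reason your argument that $b^\vartheta$ is independent of $y$ is circular: you need the strong convergence in $L^2(\Omega_T)$ of a suitable extension of $b^\ve$ to conclude $y$-independence of the two-scale limit, not the other way round.

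The paper obtains the missing spatial compactness by exploiting the \emph{structure of the ODE} for $b^\ve$. One extends $b^\ve$ from $\Omega_M^\ve$ to $\Omega$ by solving the same ODE $\partial_t b^\ve = R_b(\n^\ve,b^\ve,\mathcal N_\delta(\be(\bu^\ve)))$ with the extended $\n^\ve$, then estimates the spatial translates $\|b^\ve(t,\cdot+{\bf h}_j)-b^\ve(t,\cdot)\|_{L^2}$ by subtracting the ODE at $x$ and $x+{\bf h}_j$ and using: Lipschitz continuity of $R_b$; the $L^2$-gradient bound on $\n^\ve$ to control $\|\n^\ve(\cdot+{\bf h}_j)-\n^\ve\|_{L^2}$; the assumption $b_0\in H^1(\Omega)$; and, crucially, the \emph{nonlocal} nature of $\mathcal N_\delta$, which yields $|B_\delta(x+{\bf h}_j)\triangle B_\delta(x)|\le C\delta^2 h$ and hence an $O(h)$ bound on the corresponding difference. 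Gronwall then gives $\|b^\ve(t,\cdot+{\bf h}_j)-b^\ve(t,\cdot)\|_{L^2(\Omega_{2h})}^2\le C_\delta h$, and Kolmogorov finishes. This is the idea you are missing; without it the lemma does not go through.

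Two smaller points. First, the step ``$\|\partial_t\be(\bu^\ve)\|_{L^2(\Omega_{M,T}^\ve)}\le C$ gives $\partial_t\bu^\ve$ bounded in $L^2(0,T;H^1(\Omega_M^\ve))$'' is not automatic: Korn's inequality on the perforated domain needs an $L^2$ bound on $\partial_t\bu^\ve$ itself, which comes from the $\vartheta^{1/2}\|\partial_t\bu^\ve\|_{L^\infty(0,T;L^2(\Omega_M^\ve))}\le C$ estimate (giving a $\vartheta$-dependent but $\ve$-independent bound, which is all that is needed here). Second, your proposed identification of the correctors via unfolding is a legitimate alternative to the paper's direct approach (testing separately against $\phi\in C_0^\infty(\Omega_T)$ and $\phi\in C_0^\infty(\Omega_T;C_{\rm per}^\infty(\hat Y))$ to show $\boldsymbol\xi^\vartheta=\partial_t\bu^\vartheta$ and $\hat\nabla_y\boldsymbol\xi_1^\vartheta=\partial_t\hat\nabla_y\bu_1^\vartheta$ a.e.\ in $\Omega_T\times\hat Y_M$), but your claim that $\mathcal T^\ast_\ve(\bu^\ve)$ converges \emph{strongly} in $L^2(\Omega_T;H^1(\hat Y))$ is unjustified; only weak convergence is available, and the identification must proceed through weak limits.
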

Here $\cT^\ast_\ve: L^p(\Omega_{M,T}^\ve) \to L^p(\Omega_T \times \hat Y_M)$ is the unfolding operator defined as $\mathcal T^\ast_\ve (\phi)(t,x,y) = \phi(t,\ve [\hat x/\ve]_{\hat Y_M} + \ve y, x_3)$ for $(t,x)\in \Omega_T$ and $y \in \hat Y_M$, where $\hat x = (x_1, x_2)$ and $[\hat x/\ve]_{\hat Y_M}$ is the unique integer combination of the periods such that $\hat x/\ve - [\hat x/\ve]_{Y_M} \in \hat Y_M $, see e.g.\ \cite{CDDGZ}. 

\begin{proof}
A priori estimates in \eqref{apriori_estim_1} and \eqref{apriori_estim}   imply weak and two-scale convergences of $\p^\ve$, $\n^\ve$, $b^\ve$, and $\partial_t b^\ve$. 
Using the estimates for $\p^\ve(t+h, x) - \p^\ve(t, x)$ and $\n^\ve(t+h, x) - \n^\ve(t, x)$ 
together with the estimates for $\nabla \n^\ve$ and $\nabla \p^\ve$ in \eqref{apriori_estim} and  the properties of the extension  of $\n^\ve$ and $\p^\ve$ from $\Omega_M^\ve$ to $\Omega$, see Lemma~\ref{lem:extension},   and applying the Kolmogorov theorem \cite{Brezis,necas} we obtain the strong convergence of  $\n^\ve$  and $\p^\ve$  in $L^2(\Omega_T)$. 

 In the same way as in \cite{PS}  we show that, up to a subsequence,   
\begin{eqnarray*}
\cT^\ast_\ve(b^\ve)  \to b \quad   \text{ strongly}  \text{ in } L^2(\Omega_T\times Y_M),  \quad  \text{ as } \ve \to 0.
\end{eqnarray*}
  Here we present only the sketch of the calculations. 
Using the extension of $\n^\ve$ from $\Omega_M^\ve$ to $\Omega$, see  Lemma~\ref{lem:extension}, we define 
 the extension of $b^\ve$ from $\Omega_M^\ve$ to $\Omega$ as a solution of  the ordinary differential equation 
 \begin{equation}\label{extend_n_b}
 \begin{aligned}
& \partial_t b^\ve = R_b(\n^\ve, b^\ve, \mathcal N_\delta(\be(\bu^\ve)))
 \qquad  && \text{ in } (0,T)\times\Omega, \\
& b^\ve(0,x) = b_0  && \text{ in } \Omega.
 \end{aligned} 
 \end{equation}
 The construction of the extension for $\n^\ve$ and the uniform boundedness of $\n^\ve_1$, $\n^\ve_2$   in $\Omega_{M,T}^\ve$, see \eqref{apriori_estim}, ensure 
 $$
 \| \n^\ve \|_{L^\infty(0,T; L^\infty(\Omega))} \leq C  \|\n^\ve \|_{L^\infty(0,T; L^\infty(\Omega_{M}^\ve))}, 
 $$
 with the constant $C$ independent of $\ve$. Hence from  \eqref{extend_n_b} we obtain also the boundedness of $b^\ve$ and $\partial_t b^\ve$.   
 We show the strong convergence of $b^\ve$  by applying the Kolmogorov  theorem \cite{Brezis,necas}.  Considering equation  \eqref{extend_n_b} at $(t,x+\textbf{h}_j)$ and $(t,x)$, where   ${\bf h}_j = h \mathbf{b}_j$, with $(\mathbf{b}_1, \mathbf{b}_2, \mathbf{b}_3)$ being the canonical basis in $\mathbb R^3$ and $h>0$,  taking $b^\ve(t,x+{\bf h}_j) -  b^\ve(t,x)$ as a test function and using the Lipschitz continuity of $R_b$ yield 
 \begin{equation*} 
 \begin{aligned}
& \| b^\ve(\tau, \cdot+\textbf{h}_j) - b^\ve(\tau,\cdot) \|^2_{L^2(\Omega_{2h})} \leq 
 \| b_0(\cdot+{\mathbf h}_j) - b_0(\cdot) \|^2_{L^2(\Omega_{2h})} + C_1 \int_0^\tau  \|b^\ve(t, \cdot+{\bf h}_j) -  b(t, \cdot)\|^2_{L^2(\Omega_{2h})}  dt 
  \\ 
 &+ 
 C_2\int_0^\tau  \Big(  \|\n^\ve(t, \cdot+{\bf h}_j) -  \n(t, \cdot)\|^2_{L^2(\Omega_{2h})}  
  +   \delta^{-6}\Big \| \int_{B_{\delta,h}(x)\cap \Omega}{\rm tr }\, \bbE^\ve(b^\ve) \be(\bu^{\ve}(t,\tilde x)) d\tilde x \Big\|^2_{L^2(\Omega_{2h})} \Big) dt 
  \end{aligned}
 \end{equation*}
for  $\tau \in (0, T]$,  where
 $\Omega_{2h} = \{ x \in \Omega  \; | \; \text{dist} (x, \partial \Omega) \geq 2h \}$, 
 $B_{\delta, h}(x)= \big[B_\delta(x+\textbf{h}_j) \setminus B_\delta(x) \big]\cup \big[B_\delta(x) \setminus B_\delta(x+\textbf{h}_j) \big]$, and the constants $C_1, C_2$ are independent of $\ve$ and $h$.
 Using the regularity of the initial condition $b_{0} \in H^1(\Omega)$, the \textit{a priori} estimates for $\be(\bu^\ve)$ and 
 $\nabla \n^\ve$,  along with  the fact that
 $|B_{\delta, h}(x)\cap \Omega| \leq C \delta^2 h $ for all $x\in \overline \Omega$, and applying the Gronwall inequality we  obtain 
  \begin{eqnarray} \label{estim_strong1}
 \sup\limits_{t\in (0,T)}\| b^\ve(t, \cdot +\textbf{h}_j) - b^\ve(t,\cdot) \|^2_{L^2(\Omega_{2h})} \leq C_\delta h.
 \end{eqnarray} 
 Extending  $b^\ve$ by zero from $\Omega_T$ into $\mathbb R_{+}\times \mathbb R^3$ and using the uniform boundedness of $b^\ve$ in $L^\infty(0,T; L^\infty(\Omega))$ imply 
 \begin{eqnarray}\label{estim_strong2}
  \|b^\ve\|^2_{L^\infty(0,T; L^2(\tilde \Omega_{2h}))} +\|b^\ve\|^2_{L^2((T-h,T+h)\times  \Omega)} \leq C h, 
 \end{eqnarray}
where $\tilde \Omega_{2h} = \{ x\in \mathbb R^3 \; | \; \text{dist}(x, \partial \Omega)\leq 2h\}$ and the constant $C$ is independent of $\ve$ and $h$. The  estimates for  $\partial_t b^\ve$ ensure
 \begin{eqnarray}\label{estim_strong3}
\|b^\ve(\cdot+h, \cdot) - b^\ve(\cdot,\cdot) \|^2_{L^2((0,T-h)\times\Omega)} \leq C_1 h^2 \|\partial_t b^\ve \|^2_{L^2(\Omega_T)}\leq C_2 h^2, 
 \end{eqnarray}
where $C_1$ and $C_2$ are independent of $\ve$ and $h$. Combining \eqref{estim_strong1}--\eqref{estim_strong3} and applying  the Kolmogorov theorem yield the strong convergence of $b^\ve$ to $\widetilde b^\vartheta$ in $L^2(\Omega_T)$.   The definition of the two-scale convergence yields that  $\widetilde b^\vartheta= b^\vartheta$ and hence the two-scale limit of $b^\ve$ is independent of $y$.  Then using the properties of the unfolding operator, see e.g.\   \cite{CDG,CDDGZ}, we obtain the strong convergence of $\cT^\ast_\ve(b^\ve)$.

Considering an extension $\overline{\partial_t \bu^\ve}$ of $\partial_t \bu^\ve$ from $\Omega_M^\ve$ into $\Omega$  and applying the  Korn inequality \cite{OShY}  yield
\begin{align}\label{estim_visco_time_2}
\|\partial_t \bu^\ve\|_{L^2(0,T;H^1(\Omega_M^\ve))} &\leq 
\|\overline{\partial_t \bu^\ve}\|_{L^2(0,T;H^1(\Omega))} 
\leq C_1 \big[ \|\overline{\partial_t \bu^\ve}\|_{L^2(\Omega_T)}
+  \|\be(\overline{\partial_t \bu^\ve})\|_{L^2(\Omega_T)}\big] \nonumber \\
&\leq 
C_2 \big[ \|\partial_t \bu^\ve\|_{L^2(\Omega_{M,T}^\ve)}
+  \|\be(\partial_t\bu^\ve)\|_{L^2(\Omega_{M,T}^\ve)}\big] \leq C_3 \vartheta^{-\frac 12},
\end{align}
where the constant $C_3$ is independent of $\ve$ and $\vartheta$.\\
 Estimates \eqref{apriori_visco_2} and \eqref{estim_visco_time_2} ensure  the existence of  $\bu^\vartheta \in L^2(0,T; \cW(\Omega))$, 
$\bu^\vartheta_1 \in L^2(\Omega_T; H^1_{\text{per}}(\hat Y))$,   $\boldsymbol{\xi}^\vartheta\in L^2(0,T; H^1(\Omega))$ and  $\boldsymbol{\xi}^\vartheta_1\in L^2(\Omega_T; H^1_\text{per}(\hat Y_M))$  such that
\begin{equation*}
\begin{aligned}
\bu^\ve &  \rightharpoonup \bu^\vartheta, \; \quad && \phantom{\chi_{\hat Y_M} (\partial_t}\nabla \bu^\ve  \rightharpoonup  \nabla \bu^\vartheta + \hat \nabla_y \bu_1^\vartheta \quad \qquad\;  \text{ two-scale}, \\ 
\chi_{\Omega_M^\ve} \partial_t \bu^\ve &  \rightharpoonup \chi_{\hat Y_M} \boldsymbol{\xi}^\vartheta,  \quad && \chi_{\Omega_M^\ve} \nabla \partial_t \bu^\ve   \rightharpoonup \chi_{\hat Y_M} (\nabla  \boldsymbol{\xi}^\vartheta +\hat \nabla _y \boldsymbol{\xi}_1^\vartheta )   \quad \; \text{ two-scale}, 
\end{aligned}
\end{equation*}
see e.g.\  \cite{allaire}. Considering the  two-scale convergence of $\bu^\ve$ and $\partial_t \bu^\ve$,  we obtain 
\begin{eqnarray*}
\frac{|\hat Y_M|}{|\hat Y|}\langle \boldsymbol{\xi}^\vartheta, \phi  \rangle_{\Omega_T} = \lim\limits_{\ve \to 0}  \langle\partial_t\bu^\ve, \phi \rangle_{\Omega^\ve_{M,T}} = 
-  \lim\limits_{\ve \to 0} \langle \bu^\ve,  \partial_t\phi \rangle_{\Omega^\ve_{M,T}} 
 = -\frac{|\hat Y_M|}{|\hat Y|}\langle \bu^\vartheta,  \partial_t \phi \rangle_{\Omega_T} 
\end{eqnarray*}
for all $\phi \in C^\infty_0(\Omega_T)$. Hence,   $ \partial_t  \bu^\vartheta \in L^2(\Omega_T)$,  and $\boldsymbol{\xi}^\vartheta = \partial_t \bu^\vartheta$ a.e.\ in $\Omega_T\times\hat Y$.
The two-scale convergence of $\nabla \bu^\ve $ and $\partial_t\nabla \bu^\ve$ implies
\begin{eqnarray*}
&& |\hat Y|^{-1}\langle \partial_t \nabla  \bu^\vartheta  + \hat \nabla_y \boldsymbol{\xi}_1^\vartheta,  \phi  \rangle_{\Omega_T\times \hat Y_M} = \lim\limits_{\ve \to 0}  \langle \partial_t \nabla \bu^\ve,  \phi \rangle_{\Omega^\ve_{M,T}} \\ &&\qquad
 =
-  \lim\limits_{\ve \to 0} \langle \nabla \bu^\ve,  \partial_t\phi \rangle_{\Omega^\ve_{M,T}} 
 = - |\hat Y|^{-1}\langle \nabla \bu^\vartheta +\hat \nabla_y \bu_1^\vartheta ,  \partial_t \phi \rangle_{\Omega_T\times \hat Y_M} 
\end{eqnarray*}
for all  $\phi  \in C^\infty_0(\Omega_T; C^\infty_\text{per}(\hat Y))$. 
Thus, $\partial_t \hat \nabla_y \bu_1^\vartheta \in L^2(\Omega_T\times \hat Y_M)$ and  $\hat \nabla_y \boldsymbol{\xi}_1^\vartheta = \partial_t\hat  \nabla_y   \bu_1^\vartheta $ a.e.\ in $\Omega_T\times \hat Y_M$.
Therefore,   $\bu^\vartheta \in H^1(0,T; \cW(\Omega))$,  $\partial_t \bu_1^\vartheta\in L^2(\Omega_T; H^1_{\text{per}}(\hat Y_M))$ and 
$ \chi_{\Omega_M^\ve}\partial_t\be(\bu^\ve) \to \chi_{\hat Y_M}(\partial_t  \be(\bu^\vartheta) +\partial_t \hat \be_y(\bu_1^\vartheta))$   two-scale. 
\end{proof}


To derive macroscopic equations for the microscopic problem \eqref{visco2}--\eqref{BC},    we first derive the macroscopic equations for the perturbed system \eqref{reactions}, \eqref{BC},   \eqref{perturb_eq}. Then letting the perturbation parameter to go to zero we obtain the macroscopic equations for \eqref{visco2}--\eqref{BC}. 
 
\begin{theorem}\label{conver_visco_vartheta}
A sequence of solutions $( \bu^\ve, \p^\ve, \n^\ve, b^\ve)$,  of the microscopic problem \eqref{reactions}, \eqref{BC},   \eqref{perturb_eq},  converges to a solution  $( \bu^\vartheta, \p^\vartheta, \n^\vartheta, b^\vartheta)$ of the macroscopic perturbed equations 
 \begin{eqnarray}\label{macro_vis_nu}
\begin{aligned}
&\vartheta \bu^\vartheta_{tt} -{\rm div}\Big( \mathbb E^\vartheta_{\text{hom}}\be(\bu^\vartheta)  +  \mathbb V^\vartheta_{\text{hom}} \be(\bu^\vartheta_t) + \int_0^t   \mathbb K^\vartheta(t-s,s)  \be(\bu^\vartheta_s)  ds \Big)={\bf 0}  \; \text{ in }  \Omega_T, \\
&\big(\mathbb E^\vartheta_{\text{hom}}\be(\bu^\vartheta)  +  \mathbb V^\vartheta_{\text{hom}}\be(\bu^\vartheta_t) + \int_0^t  \mathbb K^\vartheta(t-s,s) \be(\bu^\vartheta_s) ds \big)\bnu = \bff_\cE \;   \quad \; \; \text{ on } \Gamma_{\cE,T},  \qquad \\
&\big(\mathbb E^\vartheta_{\text{hom}}\be(\bu^\vartheta)  +  \mathbb V^\vartheta_{\text{hom}} \be(\bu^\vartheta_t) + \int_0^t  \mathbb K^\vartheta(t-s,s)\be(\bu^\vartheta_s)  ds\big) \bnu = - p_\cI \bnu \;    \;  \text{ on } \Gamma_{\cI,T}, \\
&\bu^\vartheta(0,x) = \bu_0(x), \qquad \bu^\vartheta_t(0,x) ={\bf 0} \; \;    \text{ in } \Omega,
\end{aligned}
\end{eqnarray}
and 
\begin{equation}\label{macro_1_vartheta}
\begin{aligned}
  \partial_t  \p^\vartheta &= {\rm div} (\mathcal D_p \nabla \p^\vartheta) - \F_p (\p^\vartheta) ,   \\
\partial_t \n^\vartheta&= {\rm div} (\mathcal D_n \nabla \n^\vartheta) +  \F_n(\p^\vartheta, \n^\vartheta)+  \R_n(\n^\vartheta, b^\vartheta,  \mathcal N_\delta^{\rm eff}(\be(\bu^\vartheta))) ,  \\
\partial_t b^\vartheta &=  \phantom{{\rm div} (\mathcal D_c \nabla n_c)-}  \; \;  R_b(\n^\vartheta, b^\vartheta,  \mathcal N_\delta^{\rm eff} (\be(\bu^\vartheta)))   
\end{aligned}
\end{equation}
in $\Omega_T$
   together with the initial and  boundary conditions 
\begin{equation}\label{macro_bc_vartheta}
\begin{aligned}
& \mathcal D_p \nabla \p^\vartheta \,  \bnu = \theta_M^{-1} \J_p (\p^\vartheta),  &&  
\mathcal D_n \nabla \n^\vartheta \,  \bnu =\theta_M^{-1} \G(\n^\vartheta) \mathcal N_\delta^{\rm eff}(\be(\bu^\vartheta)) & & \text{on } \Gamma_{\cI,T}, \\
&\mathcal D_p \nabla \p \, \bnu = - \theta_M^{-1}\gamma_p  \, \p^\vartheta,    && \mathcal D_n \nabla \n^\vartheta \, \bnu =  \theta_M^{-1} \J_n (\n^\vartheta)   &&  \text{on } \Gamma_{\cE,T},  \\
&\mathcal D_p \nabla \p^\vartheta \,  \bnu = 0,  \quad  &&  \mathcal D_n \nabla \n^\vartheta \,  \bnu = 0 && \text{on } \Gamma_{\cU,T}, \\
&\p^\vartheta, \quad \n^\vartheta && a_3-\text{periodic in } x_3, \\
& \p^\vartheta (0) = \p_0 (x), \quad \n^\vartheta(0)= \n_0, \quad && b(0)= b_0 &&\text{in }  \Omega,  
\end{aligned}
\end{equation}
where  $ \mathbb E^\vartheta_{\text{hom}}=  \mathbb E^\vartheta_{\text{hom}}(t,b^\vartheta)$, $ \mathbb V^\vartheta_{\text{hom}}=  \mathbb V^\vartheta_{\text{hom}}(t,b^\vartheta)$, and  $\mathbb  K^\vartheta(t,s)= \mathbb  K^\vartheta(t,s, b^\vartheta)$ are defined by 
\begin{eqnarray}\label{macro_coef_2}
\begin{aligned}
 \mathbb E^{\vartheta}_{\text{hom},ijkl}(b^\vartheta) &= \dashint_{\hat Y} \big[  \mathbb E_{ijkl}(b^\vartheta,y) + \big(\mathbb E(b^\vartheta,y) \hat \be_y(\mathbf{w}_\vartheta^{ij})\big)_{kl}  +\big(\mathbb V_M(b^\vartheta,y) \partial_t\hat \be_y(\mathbf{w}_\vartheta^{ij})\big)_{kl}  \chi_{\hat Y_M} \big] dy, \\
  \mathbb V^{\vartheta}_{\text{hom},ijkl}(b^\vartheta) &= \frac 1{|\hat Y|}\int_{\hat Y_M} \big[  \mathbb V_{M, ijkl}(b^\vartheta,y) + \big(\mathbb V_M(b^\vartheta,y) \hat \be_y(\boldsymbol{\chi}^{ij}_{\mathbb V, \vartheta})\big)_{kl} \big] dy,\\
   \mathbb K^{\vartheta}_{ijkl}(t,s,  b^\vartheta) &= \dashint_{\hat Y} \big[  \big(\mathbb E(b^\vartheta(t+s),y) \hat \be_y(\mathbf{v}^{ij}_\vartheta(t,s))\big)_{kl} + \big(\mathbb V_M(b^\vartheta(t+s),y) \partial_t\hat \be_y(\mathbf{v}_\vartheta^{ij}(t,s))\big)_{kl} \chi_{\hat Y_M}\big] dy, 
  \end{aligned}
\end{eqnarray}
and 
  $\mathbf{w}^{ij}_\vartheta(t,x,y)$, $\boldsymbol{\chi}^{ij}_{\mathbb V, \vartheta}(t,x,y)$, and  $\mathbf{v}^{ij}_\vartheta$ are solutions of the unit cell problems  
  \eqref{unit_chi} and \eqref{unit_wv} with $b^\vartheta$ instead of $b$.
The macroscopic diffusion matrices $\mathcal D_\alpha^l$, with $\alpha=n,p$ and $l=1,2$, are defined as in \eqref{macro_diff_coef} and  $\mathcal N_\delta^{\rm eff}$ is defined in \eqref{macro_N_d_visco}.
\end{theorem}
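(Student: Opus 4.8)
The plan is to pass to the limit $\ve\to0$, with $\vartheta>0$ held fixed, in the weak formulations of \eqref{reactions}, \eqref{BC} and \eqref{perturb_eq} (together with the initial/boundary data in \eqref{visco2} and \eqref{in_perturbed}), using the convergences collected in Lemma~\ref{converg_vartheta}; the macroscopic coefficients are then recovered by first isolating the unit-cell problems and substituting their solutions back. For the reaction--diffusion system I would take in \eqref{weak_sol_n1} test functions of two-scale type $\bphi_\alpha(t,x)+\ve\,\bphi_{\alpha,1}(t,x,\hat x/\ve)$, $\alpha=p,n$, with $\bphi_\alpha\in C^1(\overline\Omega_T)$, $\bphi_{\alpha,1}\in C^1(\overline\Omega_T;C^\infty_{\rm per}(\hat Y))$. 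The two-scale convergence of $\nabla\p^\ve,\nabla\n^\ve$, the strong $L^2(\Omega_T)$-convergence of $\p^\ve,\n^\ve$ (which carries the limit through $\F_p,\F_n,\J_p,\J_n,\G$), the strong convergence $\cT^\ast_\ve(b^\ve)\to b^\vartheta$, and the convergence of $\mathcal N_\delta(\be(\bu^\ve))$ proved below, allow passage to the two-scale limit. Taking $\bphi_\alpha\equiv\mathbf{0}$ isolates the cell problems \eqref{unit_n} for the correctors $v^j_{\alpha,l}$, hence the matrices $\mathcal D_\alpha$ of \eqref{macro_diff_coef}; the remaining identity, after dividing by $|\hat Y|$ and using $|\hat Y_M|/|\hat Y|=\theta_M$ (which produces the factors $\theta_M^{-1}$ on the surface terms), is the weak form of \eqref{macro_1_vartheta}--\eqref{macro_bc_vartheta}, and passing to the limit in \eqref{weak_sol_n2} with the strong convergence of $b^\ve$, $\n^\ve$, $\mathcal N_\delta(\be(\bu^\ve))$ gives the ODE for $b^\vartheta$.

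For the viscoelastic equation I would use test functions $\boldsymbol{\psi}(t,x)+\ve\,\boldsymbol{\psi}_1(t,x,\hat x/\ve)$, $\boldsymbol{\psi}\in C^1([0,T];\cW(\Omega))$, $\boldsymbol{\psi}_1\in C^1(\overline\Omega_T;C^\infty_{\rm per}(\hat Y))^3$ with $\boldsymbol{\psi}_1(T,\cdot,\cdot)=\mathbf 0$, in the weak form of \eqref{perturb_eq} (the inertial term integrated by parts in $t$, using $\partial_t\bu^\ve(0,\cdot)=\mathbf 0$). The unfolded coefficients $\bbE(\cT^\ast_\ve(b^\ve),y)$, $\mathbb V_M(\cT^\ast_\ve(b^\ve),y)$ converge strongly to $\bbE(b^\vartheta,y)$, $\mathbb V_M(b^\vartheta,y)$ by the $C^1$-dependence of $\bbE_M,\mathbb V_M$ on their first argument and the strong convergence of $\cT^\ast_\ve(b^\ve)$; together with $\nabla\bu^\ve\rightharpoonup\nabla\bu^\vartheta+\hat\nabla_y\hat\bu^\vartheta$, $\chi_{\Omega^\ve_M}\nabla\partial_t\bu^\ve\rightharpoonup\chi_{\hat Y_M}(\nabla\partial_t\bu^\vartheta+\hat\nabla_y\partial_t\hat\bu^\vartheta)$ and $\chi_{\Omega^\ve_M}\partial_t\bu^\ve\rightharpoonup\chi_{\hat Y_M}\partial_t\bu^\vartheta$ two-scale, one obtains the two-scale limit identity (the inertial contribution surviving because $\vartheta$ is fixed, up to the volume fraction $\theta_M$, and the $\ve\boldsymbol{\psi}_1$-part of it vanishing in the limit). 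Choosing $\boldsymbol{\psi}\equiv\mathbf 0$ isolates the local problem for $\hat\bu^\vartheta$: for a.e.\ $(t,x)$ a \emph{degenerate} evolution problem on $\hat Y$ whose viscous term acts only on $\hat Y_M$, with zero initial datum, driven by $\be(\bu^\vartheta)$ and $\partial_t\be(\bu^\vartheta)$. By linearity in the pair $(\be(\bu^\vartheta),\partial_t\be(\bu^\vartheta))$ and Duhamel's principle one represents
\[
\hat\bu^\vartheta(t,x,y)=\sum_{i,j}\Big[\be_{ij}(\bu^\vartheta)(t,x)\,\mathbf{w}^{ij}_\vartheta(t,x,y)+\int_0^t\partial_s\be_{ij}(\bu^\vartheta)(s,x)\,\mathbf{v}^{ij}_\vartheta(t-s,s,x,y)\,ds\Big],
\]
where $\mathbf{w}^{ij}_\vartheta,\boldsymbol{\chi}^{ij}_{\mathbb V,\vartheta}$ solve \eqref{unit_chi} with $b^\vartheta$, $\mathbf{v}^{ij}_\vartheta$ solves \eqref{unit_wv}, the field $\boldsymbol{\chi}^{ij}_{\mathbb V,\vartheta}$ encoding the instantaneous viscous response matched into the initial datum of \eqref{unit_wv}; here $\be(\bu^\vartheta)=\sum_{ij}\be_{ij}(\bu^\vartheta)\,\mathbf{b}_{ij}$ accounts for the terms $\mathbf{b}_{ij}$ in the cell problems, and $\bu^\vartheta(0,\cdot)=\bu_0$, $\partial_t\bu^\vartheta(0,\cdot)=\mathbf 0$, $\hat\bu^\vartheta(0,\cdot,\cdot)=\mathbf 0$ are inherited from the initial data.

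Substituting this representation of $\hat\bu^\vartheta$ into the cell-averaged flux appearing in the $\boldsymbol{\psi}_1\equiv\mathbf 0$ part of the limit identity yields exactly the tensors $\mathbb E^\vartheta_{\rm hom}$, $\mathbb V^\vartheta_{\rm hom}$ and the memory kernel $\mathbb K^\vartheta$ of \eqref{macro_coef_2}, so that identity becomes the weak form of \eqref{macro_vis_nu}. It remains to prove the convergence of $\mathcal N_\delta(\be(\bu^\ve))$: since $\mathbb E^\ve(b^\ve,x)\be(\bu^\ve)\rightharpoonup\dashint_{\hat Y}\bbE(b^\vartheta,y)\big(\be(\bu^\vartheta)+\hat\be_y(\hat\bu^\vartheta)\big)\,dy$ weakly in $L^2(\Omega_T)$, and for fixed $\delta>0$ the averaging operator $v\mapsto\dashint_{B_\delta(\cdot)\cap\Omega}v\,d\tilde x$ is continuous from weak-$L^2$ into $C(\overline\Omega)$ on bounded sets, the averages converge strongly; the $y$-integral of the limiting flux equals $\widetilde{\mathbb E}_{\rm hom}(b^\vartheta)\be(\bu^\vartheta)+\int_0^t\widetilde{\mathbb K}(t-s,s,b^\vartheta)\partial_s\be(\bu^\vartheta)\,ds$ by the representation above and the definitions \eqref{macro_coef_3}, and $(\cdot)^+$ is continuous, so $\mathcal N_\delta(\be(\bu^\ve))\to\mathcal N^{\rm eff}_\delta(\be(\bu^\vartheta))$. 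This strong convergence closes the passage to the limit in all nonlinear reaction and flux terms, and a fixed-point argument parallel to that of Theorem~\ref{th_3}, applied to the macroscopic system, upgrades the subsequential convergence to convergence of the whole sequence.

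I expect the principal obstacle to be the analysis of the degenerate, time-dependent local problem for $\hat\bu^\vartheta$: one must solve an evolution equation whose viscous regularization vanishes on $\hat Y_F$, justify the Duhamel decomposition displayed above, and match the instantaneous viscous response $\boldsymbol{\chi}^{ij}_{\mathbb V,\vartheta}$ with the initial datum of the kernel problem \eqref{unit_wv}, while carefully tracking the dependence of every corrector on $b^\vartheta(t,x)$ and on the two time arguments $(t-s,s)$. The degeneracy also forces the estimates for $\partial_t\be$, and the corresponding two-scale convergences, to be carried only on $\hat Y_M$, which is precisely why the homogenized coefficients in \eqref{macro_coef_2} carry the two different normalizations $\dashint_{\hat Y}$ and $|\hat Y|^{-1}\int_{\hat Y_M}$; the limit $\vartheta\to0$ then rearranges the viscous corrector contributions between $\mathbb E_{\rm hom}$ and $\mathbb K$ as in \eqref{macro_coef_2}--\eqref{macro_coef_3}, to be handled in the subsequent step of the proof of Theorem~\ref{viscoelastic}.
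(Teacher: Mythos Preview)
Your overall architecture matches the paper's: oscillating two-scale test functions in both the reaction--diffusion and the viscoelastic weak formulations, passage to the limit via Lemma~\ref{converg_vartheta}, isolation of the cell problem by taking $\boldsymbol{\psi}\equiv\mathbf 0$, and the Duhamel representation of $\hat\bu^\vartheta$ in terms of $\mathbf{w}^{ij}_\vartheta,\boldsymbol{\chi}^{ij}_{\mathbb V,\vartheta},\mathbf{v}^{ij}_\vartheta$. That part is correct and essentially what the paper does.

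The genuine gap is your argument for the strong convergence of $\mathcal N_\delta(\be(\bu^\ve))$. You claim that the weak $L^2(\Omega_T)$-convergence of $\mathbb E^\ve(b^\ve,x)\be(\bu^\ve)$, combined with the compactness of the spatial averaging operator $v\mapsto\dashint_{B_\delta(\cdot)\cap\Omega}v\,d\tilde x$ from weak-$L^2(\Omega)$ to $C(\overline\Omega)$, yields strong convergence. But that compactness is purely spatial: the averaging does nothing to damp oscillations in $t$. A sequence like $v_\ve(t,x)=\sin(t/\ve)$ converges weakly to $0$ in $L^2(\Omega_T)$, yet its spatial average is still $\sin(t/\ve)$, which does not converge strongly. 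Since $\partial_t\be(\bu^\ve)$ is controlled only on $\Omega^\ve_M$ (the viscosity is degenerate on the fibrils), you have no direct time regularity of $\be(\bu^\ve)$ on all of $\Omega$, and hence no Aubin--Lions type shortcut. Without strong convergence of $\mathcal N_\delta(\be(\bu^\ve))$ you cannot pass to the limit in the nonlinear terms $\R_n(\n^\ve,b^\ve,\mathcal N_\delta(\be(\bu^\ve)))$, $R_b(\n^\ve,b^\ve,\mathcal N_\delta(\be(\bu^\ve)))$ and the flux $\G(\n^\ve)\mathcal N_\delta(\be(\bu^\ve))$.

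The paper fills this gap by proving a time-translate estimate $\|\be(\bu^\ve(\cdot+h))-\be(\bu^\ve)\|^2_{L^2((0,T-h)\times\Omega)}\le Ch^{1/2}$ uniformly in $\ve$ and $\vartheta$, and then invoking the Kolmogorov compactness theorem. This estimate is obtained from the equation itself: one tests the difference of \eqref{perturb_eq} at $t+h$ and $t$ with $\delta^h\bu^\ve=\bu^\ve(\cdot+h)-\bu^\ve$; the dangerous term $\vartheta\|\delta^h\partial_t\bu^\ve\|^2_{L^2(\Omega^\ve_M)}$ arising from the inertial contribution is then controlled by a second use of the equation (integrated over $(t,t+h)$ and tested with an extension of $\delta^h\partial_t\bu^\ve$), together with the uniform bounds on $\be(\bu^\ve)$, $\partial_t\be(\bu^\ve)|_{\Omega^\ve_M}$, $b^\ve$ and $\partial_t b^\ve$ from Lemma~\ref{lemma_exist}. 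This is precisely the place where the perturbation $\vartheta\chi_{\Omega^\ve_M}\partial_t^2\bu^\ve$ earns its keep, and it is the missing idea in your proposal.
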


\begin{proof}
To pass to the limit in the equations for $\n^\ve$ and $b^\ve$,  we shall prove the strong convergence of $\int_\Omega \be(\bu^\ve) dx$ in $L^2(0,T)$ using the Kolmogorov compactness theorem \cite{Brezis,necas}.  
Considering the difference of  \eqref{perturb_eq} for $t$ and $t+h$  and taking $\delta^h \bu^\ve(t,x)= \bu^\ve(t+h, x) - \bu^\ve(t,x)$ as a test function yield 
\begin{eqnarray}\label{estim_diff_11}
\begin{aligned}
&\int_0^{T-h} \Big[\langle \mathbb E^\ve(b^\ve(t+h))\be(\bu^\ve(t+h)) -\mathbb E^\ve(b^\ve(t))\be(\bu^\ve(t)),
 \be(\delta^h\bu^\ve) \rangle_{\Omega}
\\&\quad + \langle \mathbb V^\ve(b^\ve(t+h))\partial_t\be(\bu^\ve(t+h))- \mathbb V^\ve(b^\ve(t))\partial_t\be(\bu^\ve(t)), \be(\delta^h\bu^\ve)  \rangle_{\Omega_M^\ve}\Big] dt \\
&\quad  +
 \vartheta \langle  \delta^h\partial_t\bu^\ve(T-h), \delta^h\bu^\ve(T-h) \rangle_{\Omega^\ve_{M}}
  - \vartheta \langle \delta^h\partial_t \bu^\ve(0), \delta^h \bu^\ve(0) \rangle_{\Omega^\ve_{M}}\\&\qquad=
 \int_0^{T-h}  \Big[\vartheta  \|\delta^h \partial_t \bu^\ve\|^2_{L^2(\Omega_M^\ve)}+
  \langle \delta^h \bff_\cE, \delta^h \bu^\ve \rangle_{L^2(\Gamma_\cE)} - 
  \langle \delta^h p_\cI \bnu, \delta^h \bu^\ve \rangle_{L^2(\Gamma_\cE)}\Big] dt .
  \end{aligned}
\end{eqnarray}
To estimate the first term on the right-hand side we consider  $\delta^h \bu^\ve_t(t,x)= \bu^\ve_t (t+h,x) -  \bu^\ve_t (t,x)= \int_{t}^{t+h} \partial^2_{\tau} \bu^\ve(\tau, x)  d\tau$,  integrate  \eqref{perturb_eq}  
over $(t, t+h)$ and take $\overline {\bu^\ve_t}(t+h,x) - \overline {\bu^\ve_t}(t,x)$ as a test function, with $\overline u_t^\ve$ being an extension of $u_t^\ve$ from $\Omega_M^\ve$ to $\Omega$ as in Lemma~\ref{lem:extension},
\begin{eqnarray}\label{estim_deriv_h}
\begin{aligned}
&\vartheta  \|\delta^h\bu^\ve_t\|^2_{L^2((0, T-h)\times\Omega_M^\ve)}
 \leq   h C_1 \big[\|p_{\mathcal I}\|_{H^1(0,T; L^2(\Gamma_{\mathcal I}))} + \|\bff_{\mathcal E}\|_{H^1(0,T; L^2(\Gamma_{\mathcal E}))} \big]\\
 &+h^{\frac 12}  C_2 \big[\|\be(\bu^\ve)\|^2_{L^\infty(0,T; L^2(\Omega))}   + \|\be(\overline {\bu^\ve_t})\|^2_{L^2(\Omega_T)}   + \|\be(\bu^\ve_t)\|^2_{L^2(\Omega_{M,T}^\ve)} \big]\leq C h^{\frac 12},\quad
\end{aligned}
\end{eqnarray}
where the constant $C$ is independent of $\ve$, $\vartheta$, and $h\in (0,T)$.  Here we used  estimates \eqref{apriori_visco_2} and  the  property of the extension, i.e.\
$ \|\be(\overline {\bu^\ve_t})\|^2_{L^2(\Omega_T)} \leq C_1 \|\be(\bu^\ve_t)\|^2_{L^2(\Omega_{M,T}^\ve)}$ with a constant $C_1$ independent of $\ve$, see e.g.\  \cite{OShY}.

Using the estimate for $\vartheta^{1/2}\|\partial_t \bu^\ve\|_{L^\infty(0,T; L^2(\Omega_M^\ve))}$ in \eqref{apriori_visco_2} we obtain 
\begin{equation}\label{estim_111}
\begin{aligned}
\vartheta \langle  \delta^h \partial_t \bu^\ve(T-h), \delta^h\bu^\ve(T-h) \rangle_{\Omega^\ve_{M}}
 \leq 2 \vartheta \|\partial_t \bu^\ve\|_{L^\infty(0, T; L^2(\Omega^\ve_{M}))}\|\delta^h\bu^\ve(T-h)\|_{L^2(\Omega^\ve_{M})}\\
\leq  C \vartheta^{1/2}  \big\|\int_{T-h}^T \partial_t \bu^\ve dt \big\|_ {L^2(\Omega^\ve_{M})} \leq C h \, \vartheta^{1/2} \|\partial_t \bu^\ve  \|_ {L^\infty(0,T; L^2(\Omega^\ve_{M}))} \leq C h.
\end{aligned}
\end{equation}
In the same way  we also  have 
\begin{equation}\label{estim_112}
\vartheta \langle \delta^h\partial_t \bu^\ve(0), \delta^h \bu^\ve(0) \rangle_{\Omega^\ve_{M}} \leq Ch,
\end{equation}
where  $C$ is independent of $\ve$, $\vartheta$, and $h$.  To estimate  the first two terms on the left-hand side of \eqref{estim_diff_11} we use  the uniform boundedness of $b^\ve$ and $\partial_t b^\ve$,  the equality $\delta^h\be(\bu^\ve(t,x)) = h \int_0^1 \partial_t \be(\bu^\ve(t + hs, x)) ds$,  and estimates \eqref{apriori_visco_2}:
\begin{eqnarray}\label{estim_113}
\begin{aligned}
\int_0^{T-h} \langle (\mathbb E^\ve(b^\ve(t+h)) -\mathbb E^\ve(b^\ve(t)))\be(\bu^\ve(t)),
 \be(\delta^h\bu^\ve(t)) \rangle_{\Omega} dt 
 \leq h C_1  \|\partial_t b^\ve \|_{L^\infty(\Omega_{M,T}^\ve)}  \|\be(\bu^\ve) \|^2_{L^2(\Omega_{T})} \leq C_2h,\\
\int_0^{T-h}\langle \mathbb V^\ve(b^\ve(t+h),x)\partial_t\be(\bu^\ve(t+h)) - \mathbb V^\ve(b^\ve(t),x)\partial_t\be(\bu^\ve(t)), \delta^h\be(\bu^\ve(t))  \rangle_{\Omega_M^\ve} dt  \hspace{ 3.6 cm } \\
\leq h C_3\| b^\ve \|_{L^\infty(\Omega_{M,T}^\ve)}  \|\partial_t\be(\bu^\ve) \|^2_{L^2(\Omega_{M,T}^\ve)}\leq C_4 h,
 \end{aligned}
\end{eqnarray}
with the constants $C_j$, $j=1,2,3,4$,  independent of $\ve$, $\vartheta$, and $h$.
Then,  the assumptions on $\mathbb E$, $\bff_\cE$, and $p_\cI$,   estimates \eqref{apriori_visco_2}  and \eqref{estim_deriv_h}--\eqref{estim_113}, and the boundedness of $b^\vartheta$ ensures 
\begin{eqnarray}\label{Kolmog}
\begin{aligned}
& \| \be(\bu^\ve(t+h)) - \be(\bu^\ve(t)) \|^2_{L^2((0,T-h)\times\Omega)}\leq  C h^{1/2}, \\
&\| \be(\bu^\ve) \|^2_{L^2((T-h,T)\times\Omega)}\leq  h \| \be(\bu^\ve) \|^2_{L^\infty(0,T; L^2(\Omega))} \leq C h,
\end{aligned}
\end{eqnarray}
with a constant $C$ independent of $\ve$, $\vartheta$, and $h$.

Thus,  the estimate \eqref{Kolmog}, along with the estimate for $\partial_t b^\ve$,  the Kolmogorov theorem,  and the two-scale convergence of $\bu^\ve$, yields the strong convergence, 
 up to a subsequence, 
 $$
 \begin{aligned}
 \int_\Omega \be(\bu^\ve) dx & \to \int_\Omega \dashint_{\hat Y} [\be(\bu^\vartheta)+ \hat \be_y(\hat\bu^\vartheta)] dy dx \quad  \text{ in } L^2(0,T), \\
 \int_\Omega \mathbb E(b^\vartheta, x/\ve)\be(\bu^\ve) dx &\to \int_\Omega \dashint_{\hat Y} \mathbb E(b^\vartheta, y)(\be(\bu^\vartheta)+ \hat \be_y(\hat\bu^\vartheta)) dy dx\quad  \text{ in } L^2(0,T), \quad \text{ as } \; \ve \to 0.
 \end{aligned}
 $$

Now we can pass to the limit as $\ve\to 0$ in the microscopic equations  \eqref{reactions}, \eqref{BC}, and \eqref{perturb_eq},  with initial and boundary conditions in  \eqref{visco2} and \eqref{in_perturbed}.
Considering  $\bphi_\alpha(t,x) =\boldsymbol{\varphi}_\alpha(t,x)  +\ve \boldsymbol{\psi}_\alpha(t,x, \hat x/\ve)$ as a test function  in \eqref{weak_sol_n1}--\eqref{weak_sol_n2}, 
 where $\boldsymbol{\varphi}_\alpha \in C^\infty(\overline \Omega_T)$ and   $a_3$-periodic in $x_3$,  and $\boldsymbol{\psi}_\alpha \in C^\infty_0(\Omega_T; C^\infty_{\text{per}}(\hat Y))$, for $\alpha=1,2$,  applying the two-scale convergence and using  the strong convergence of  $\cT^\ast_\ve(b^\ve)$ and $\p^\ve$, $\n^\ve$,  see  Lemma~\ref{converg_vartheta},  along with 
  strong convergence of $\int_\Omega \be(\bu^\ve) dx$ and  $\int_\Omega \mathbb E(b^\vartheta, x/\ve)\be(\bu^\ve) dx$, 
 we obtain macroscopic equations  \eqref{macro_1_vartheta}--\eqref{macro_bc_vartheta} for $\p^\vartheta$, $\n^\vartheta$, and $b^\vartheta$ in the same way as in \cite{PS}. 

 Using the strong convergence of $\cT^\ast_\ve(b^\ve)$ along with  the two-scale convergence of  $\bu^\ve$, $\be(\bu^\ve)$ and $\partial_t \be(\bu^\ve)$, as $\ve \to 0$, yields  the macroscopic equations
\begin{eqnarray}\label{macro_3}
\begin{aligned}
 \langle \bbE(b^\vartheta,y) (\be(\bu^\vartheta) +\hat  \be_y(\hat \bu^\vartheta)) + 
 \mathbb V(b^\vartheta,y) \partial_t (\be(\bu^\vartheta) + \hat \be_y(\hat \bu^\vartheta)), \be(\boldsymbol{\psi})  + \hat \be_y(\boldsymbol{\psi}_1) \rangle_{\Omega_T\times \hat Y} \qquad 
\\- \vartheta|\hat Y|\langle \partial_t \bu^\vartheta, \partial_t \boldsymbol{\psi} \rangle_{\Omega_T} = |\hat Y|  \big[\langle \mathbf{f}, \boldsymbol{\psi} \rangle_{ \Gamma_{\cE,T}} - 
\langle p_{\cI}\bnu, \boldsymbol{\psi} \rangle_{ \Gamma_{\cI,T}} \big]\qquad 
\end{aligned}
\end{eqnarray}
for $\boldsymbol{\psi} \in C^1_0(0,T; C^1(\overline \Omega))^3$, with $\boldsymbol{\psi}$  being $a_3$-periodic in $x_3$, and $\boldsymbol{\psi}_1 \in C^\infty_0(\Omega_T; C^\infty_\text{per}(\hat Y))^3$.

 Taking $\boldsymbol{\psi}\equiv \textbf{0}$ we obtain 
\begin{eqnarray}\label{macro_4}
\langle \bbE(b^\vartheta,y) (\be(\bu^\vartheta) + \hat \be_y(\hat \bu^\vartheta))  + 
 \mathbb V(b^\vartheta,y) \partial_t  (\be(\bu^\vartheta) + \hat \be_y(\hat \bu^\vartheta)),  \hat \be_y(\boldsymbol{\psi}_1) \rangle_{\Omega_T\times \hat Y}=0.\qquad 
\end{eqnarray}
Considering  the structure of  \eqref{macro_4} and taking into account the fact that $\bbE(b^\vartheta,\cdot)$ and $\bbV(b^\vartheta, \cdot)$ depend on $t$, we seek $\hat \bu^\vartheta$ in the form 
$$
\hat \bu^\vartheta (t,x,y)= \sum_{i,j=1}^{3} \Big[\be(\bu^\vartheta(t,x))_{ij} \mathbf{w}^{ij}_\vartheta(t, x, y) + \int_0^t  \partial_s \be(\bu^\vartheta(s,x))_{ij} \mathbf{v}^{ij}_\vartheta(t-s,s,x, y) ds \Big]
$$
 and rewrite the equation  \eqref{macro_4}  as 
\begin{eqnarray}\label{macro_5}
\begin{aligned}
&\Big\langle \bbE(b^\vartheta, y) \Big(\be(\bu^\vartheta) + \sum_{i,j=1}^{3}
 \Big[ \be(\bu^\vartheta)_{ij} \hat \be_y(\mathbf{w}^{ij}_\vartheta) +  \int_0^t  \partial_s \be(\bu^\vartheta)_{ij} \hat \be_y(\mathbf{v}^{ij}_\vartheta)  ds \Big] 
\Big ),\hat \be_y(\boldsymbol{\psi}_1)\Big \rangle_{\Omega_T\times \hat Y} \\
&\qquad+
\Big\langle \mathbb V_M(b^\vartheta,y) \Big(\partial_t \be(\bu^\vartheta) 
 +\sum_{i,j=1}^{3} \Big[  \partial_t \be(\bu^\vartheta)_{ij} \hat \be_y(\mathbf{w}^{ij}_\vartheta) + \be(\bu^\vartheta)_{ij} \partial_t \hat \be_y(\mathbf{w}^{ij}_\vartheta)  \\
&  \qquad+ \partial_t \be(\bu^\vartheta)_{ij}\hat  \be_y(\mathbf{v}^{ij}_\vartheta(0,t,x,y)) +  
\int_0^t \partial_s \be(\bu^\vartheta)_{ij} \partial_t\hat  \be_y(\mathbf{v}^{ij}_\vartheta)  ds \Big]  \Big),  \hat \be_y(\boldsymbol{\psi}_1)\Big \rangle_{\Omega_T\times \hat Y_M}=0.
\end{aligned}
\end{eqnarray}
Considering the terms with  $\be(\bu^\vartheta)$ and $\partial_t \be(\bu^\vartheta)$, respectively,  we obtain that 
$\mathbf{v}^{ij}_\vartheta(0,t,x,y) = \boldsymbol{\chi}^{ij}_{\mathbb V, \vartheta}(t,x,y) -\mathbf{w}^{ij}_\vartheta(t,x,y) $ a.e.\ in $\Omega_T\times \hat Y_M$, 
where  $\mathbf{w}^{ij}_\vartheta(t,x,y)$ and $\boldsymbol{\chi}^{ij}_{\mathbb V, \vartheta}(t,x,y)$ are solutions of the unit cell problems   
\eqref{unit_chi} with $b^\vartheta$ instead of $b$. Using this  in \eqref{macro_5}  implies   that  $\mathbf{v}^{ij}_\vartheta$  satisfies  \eqref{unit_wv} with $b^\vartheta$ instead of $b$.  
Taking  $\boldsymbol{\psi}_1 \equiv \textbf{0}$ in \eqref{macro_3} yields  the macroscopic    equations  \eqref{macro_vis_nu} for $\bu^\vartheta$.

In the same way as for  the macroscopic elasticity tensor  for the equations of linear elasticity, see e.g.\  \cite{JKO, OShY}, we obtain that $\mathbb V^\vartheta_{\rm hom}$ is positive-definite and possesses major and minor symmetries, as in Assumption~\ref{assumptions}.8. 
The assumptions on $\mathbb E$ and $\mathbb V_M$ and the uniform boundedness of $b^\vartheta$ ensure the boundedness of 
$\mathbb E^\vartheta_{\rm hom}$ and $\mathbb K^\vartheta$.  Notice that the positive-definiteness and symmetry properties of $\mathbb V^\vartheta_{\rm hom}$ together with  the boundedness of  $\mathbb E^\vartheta_{\rm hom}$ and $\mathbb K^\vartheta$ ensure the well-possedness of the   viscoelastic equations \eqref{macro_vis_nu}.
\end{proof}

 Now we can complete the proof of the main result of the  paper. 
 
 \begin{proof}[Proof of Theorem~\ref{viscoelastic}]
To complete the proof  of Theorem~\ref{viscoelastic}, we  have to show that $\{\p^\vartheta\}$, $\{\n^\vartheta\}$, $\{b^\vartheta\}$, and $\{\bu^\vartheta\}$ converge to solutions of the 
macroscopic model \eqref{macro_1}--\eqref{macro_N_d_visco}. 
Using the fact that the estimates \eqref{apriori_visco_2} and \eqref{Kolmog} for $\bu^\ve$ are independent of $\vartheta$ and $\ve$ and applying the weak and two-scale convergence of $\bu^\ve$ together with the lower semicontinuity  of a norm  yield 
\begin{eqnarray}\label{estim_u_vartheta1}
\begin{aligned}
\|\bu^\vartheta\|^2_{L^\infty(0,T; \cW(\Omega))} + \|\be(\bu^\vartheta)+\hat  \be_y(\hat \bu^\vartheta)\|^2_{L^\infty(0,T; L^2(\Omega \times\hat Y))}  & \leq C, \\
   \| \be(\bu^\vartheta(\cdot+h, \cdot)) - \be(\bu^\vartheta) \|^2_{L^2((0,T-h)\times\Omega)} &
\leq  C h^{1/2}, 
\end{aligned}
\end{eqnarray}
with a constant $C$ independent of $\vartheta$ and $h$.

Similar to  the proof of  Lemma~\ref{th:exist_1},  using the estimates \eqref{estim_u_vartheta1} we obtain  the estimates for
$\p^\vartheta$ and 
 $\n^\vartheta$ in $L^2(0,T; \mathcal V(\Omega))\cap L^\infty(0,T; L^\infty(\Omega))$,  
  and  $b^\vartheta$ in $W^{1, \infty}(0,T; L^\infty(\Omega))$ uniformly in $\vartheta$. In a similar way as in the proof of Lemma~\ref{converg_vartheta}, 
we show  
\begin{equation}\label{estim_nb_Kolmog}
\begin{aligned}
&\|b^\vartheta(\cdot,\cdot+\textbf{h}_j)- b^\vartheta\|^2_{L^\infty(0,T; L^2(\Omega))} \leq C h, \\
&\|b^\vartheta(\cdot+h,\cdot)- b^\vartheta\|^2_{L^2(\Omega_T)} \leq C h, 
\end{aligned}
\end{equation}
where $b^\vartheta$ is extended by zero from $\Omega_T$ into $\mathbb R^3 \times \mathbb R_+$ and $\textbf{h}_j= h \mathbf{b}_j$, with $h\in (0, T)$. 
Then, applying the Kolmogorov theorem we obtain the strong convergence of  a subsequence of  $b^\vartheta$ in $L^2(\Omega_T)$ as $\vartheta \to 0$.

In a similar way as in the proof of  Lemma~\ref{th_2}, considering the assumptions on $\mathbb E$ and $\mathbb V$,  together with the boundedness of $b^\vartheta$ and $\partial_t b^\vartheta$, uniformly in $\vartheta$, we obtain 
the existence of weak solutions of the unit cell problems \eqref{unit_chi}, with $b^\vartheta$ instead of $b$, satisfying 
 \begin{eqnarray}\label{estim_w_chi}
 \begin{aligned}
& \| {\bf w}_\vartheta^{ij} \|^2_{L^\infty(0,T; H^1_{\text{per}}(\hat Y))} + \|\partial_t \hat \be_y({\bf w}^{ij}_\vartheta)\|^2_{L^2(0,T;  L^2(\hat Y_M))}  \leq C && \text{for a.a. } x\in \Omega , \\
&  \|\boldsymbol{\chi}^{ij}_{\mathbb V,\vartheta}\|^2_{H^1_\text{per} (\hat Y_M)}  \leq C  && \text{for a.a. } (t,x)\in \Omega_T, \quad
  \end{aligned}
  \end{eqnarray}
where the constant $C$ is independent of $\vartheta$.  The estimates \eqref{estim_w_chi} and boundedness of $b^\vartheta$ and $\partial_t b^\vartheta$  ensure  the existence of a weak solution of the unit cell problem   \eqref{unit_wv} with $b^\vartheta$ instead of $b$  such that 
  \begin{eqnarray}\label{estim_v_theta} 
  \| {\bf v}_\vartheta^{ij} \|^2_{L^\infty(0,T-s; H^1_{\text{per}}(\hat Y))} +  \|\partial_t\hat \be_y({\bf v}^{ij}_\vartheta)\|^2_{L^2(0,T-s;  L^2(\hat Y_M))} \leq C 
 \end{eqnarray}
for a.a.\ $x\in \Omega$ and  $s \in [0,T]$.

 Using the assumptions on $\mathbb V_M$, we obtain the symmetry properties  and strong ellipticity of $\mathbb V^\vartheta_{\text{hom}}$, see e.g.~\cite{SP,OShY}, with an ellipticity constant  independent of $\vartheta$.   The assumptions on $\mathbb E$ and $\mathbb V_M$, the uniform boundedness of $b^\vartheta$, and the estimates \eqref{estim_w_chi}--\eqref{estim_v_theta} ensure 
 \begin{equation}\label{estim_hom_tensor}
 \begin{aligned}
 \|\mathbb E^\vartheta_\text{hom}(b^\vartheta)\|_{L^2(0,T; L^\infty(\Omega))} + 
 \|\mathbb V^\vartheta_\text{hom}(b^\vartheta)\|_{L^\infty(0,T; L^\infty(\Omega))} +
 \|\mathbb K^\vartheta(t-s,s, b^\vartheta) \|_{L^2(0,T; L^\infty(0,t; L^\infty(\Omega)))} \leq C,
 \end{aligned}
 \end{equation}
with a constant $C$ independent of $\vartheta$. 

Taking $\bu^\vartheta_t$ as a test function in the weak formulation of \eqref{macro_vis_nu},  using   the strong ellipticity of $\mathbb V^\vartheta_{\text{hom}}$ together with 
estimates \eqref{estim_u_vartheta1} and \eqref{estim_hom_tensor},   and applying the second Korn inequality  for $\bu^\vartheta(t) \in \mathcal W(\Omega)$ 
  yield   
 \begin{equation}\label{apriori_u_theta}
 \vartheta \|\partial_t \bu^\vartheta\|^2_{L^\infty(0, T; L^2(\Omega))} +  \|\bu^\vartheta\|^2_{H^1(0, T; \mathcal W(\Omega))} \leq C, 
 \end{equation}
with a constant $C$ independent of $\vartheta$.   Hence we have  the weak convergence, up to a subsequence, of $\bu^\vartheta$ in $H^1(0,T; \cW(\Omega))$
and weak-$\ast$ convergence of $\vartheta^{1/2}  \partial_t \bu^\vartheta$ in $L^\infty(0, T; L^2(\Omega))$.
 
Hence,  to pass to the limit as $\vartheta \to 0$ in the macroscopic equations \eqref{macro_vis_nu} we have to show the strong convergence of $\mathbb E^\vartheta_\text{hom}$, $\mathbb V^\vartheta_\text{hom}$, and $\mathbb K^\vartheta$ as $\vartheta \to 0$.
  First, we show the strong convergence of $\int_{\hat Y} \hat \be_y({\bf w}^{ij}_\vartheta) dy$ and $\int_{\hat Y_M} \partial_t \hat \be_y({\bf w}^{ij}_\vartheta) dy$  in $L^2(\Omega_T)$.
Considering the first equation in  \eqref{unit_chi}, with $b^\vartheta$ instead of $b$, for $t+h$ and $t$, with  $h>0$, taking $\delta^h {\bf w}^{ij}_\vartheta(t,x,y)= {\bf w}^{ij}_\vartheta(t+h, x,y) - {\bf w}^{ij}_\vartheta(t, x,y)$ as a test function, and using $\delta^h \hat \be_y(\mathbf{w}^{ij}_\vartheta (t))= h\int_0^1\partial_t \hat \be_y(\mathbf{w}^{ij}_\vartheta)(t+h\tau) d\tau$, we obtain  
\begin{eqnarray}\label{estim5_1}
\begin{aligned}
\| \delta^h \hat \be_y(\mathbf{w}^{ij}_\vartheta )\|^2_{L^2((0,T-h)\times \hat Y)}
 \leq C_1 h \big[\|b^\vartheta\|_{L^\infty(0,T; L^\infty(\Omega))}\|\partial_t\hat \be_y(\mathbf{w}^{ij}_\vartheta)\|^2_{L^2(\hat Y_{M,T})} \qquad\\
+\|\partial_t b^\vartheta\|_{L^2(0,T; L^\infty(\Omega))}\|\hat \be_y(\mathbf{w}^{ij}_\vartheta)\|^2_{L^\infty(0,T; L^2(\hat Y))}\big]  \leq  C_2 h \qquad
\end{aligned}
\end{eqnarray}
for a.a.\ $x \in \Omega$ and the constants $C_1$ and $C_2$ are independent of $\vartheta$. 
Taking an extension $\overline{\delta^h\partial_t {\bf w}^{ij}_\vartheta}$  of $\delta^h \partial_t {\bf w}^{ij}_\vartheta$  from $\hat Y_M$ to $\hat Y$ as a test function in the weak formulation of \eqref{unit_chi}$_1$, with $b^\vartheta$ instead of $b$,  yields
\begin{eqnarray}\label{estim5_2}
\begin{aligned}
\|  \delta^h \hat \be_y(\partial_t\mathbf{w}^{ij}_\vartheta )\|^2_{L^2((0,T-h)\times \hat Y_M)}\leq C_1 
\|b^\vartheta\|^2_{L^\infty(0,T; L^\infty(\Omega))}\| \delta^h\hat  \be_y(\mathbf{w}^{ij}_\vartheta)\|^2_{L^2(\hat Y_{T-h})}\hspace{1.75in} \\
 + C_2 \big[1+ \| \hat \be_y(\mathbf{w}^{ij}_\vartheta)\|^2_{L^2(\hat Y_T) } +\|\hat \be_y(\partial_t \mathbf{w}^{ij}_\vartheta)\|^2_{L^2(\hat Y_{M,T}) } \big]\| \delta^h b^\vartheta\|^2_{L^\infty(0, T-h; L^\infty(\Omega))} 
\leq h C_3  \|b^\vartheta\|^2_{W^{1, \infty}(0,T; L^\infty(\Omega))}
\end{aligned}
\end{eqnarray}
for a.a.\ $x \in \Omega$ and the constants $C_1$, $C_2$, and $C_3$ are  independent of $\vartheta$ and $h$.  Here, we used the fact that 
due to  the periodicity of  ${\bf w}^{ij}_\vartheta $ and the Korn inequality  we have 
$$\|\delta^h \partial_t {\bf w}^{ij}_\vartheta \|_{L^2(0,T-h; H^1(\hat Y_M))} \leq C \|\delta^h \hat \be_y(\partial_t {\bf w}^{ij}_\vartheta)\|_{L^2((0,T-h)\times \hat Y_{M})}, $$
for a.a.\ $x \in \Omega$, and $\|\hat \be_y(\overline{\delta^h\partial_t {\bf w}^{ij}_\vartheta})\|_{L^2((0,T-h)\times \hat Y)} \leq C \|\hat \be_y(\delta^h\partial_t {\bf w}^{ij}_\vartheta)\|_{L^2((0,T-h)\times \hat Y_{M})}$, where the constant $C$  is independent of $\vartheta$. 

Considering    \eqref{unit_chi}$_1$, with $b^\vartheta$ instead of $b$, for $x+\textbf{h}_j$ and $x$, where $\textbf{h}_j = h\mathbf{b}_j$, and   using  \eqref{estim_nb_Kolmog} imply 
\begin{eqnarray}\label{estim5_3}
\| \delta^{\textbf{h}_j} \hat \be_y(\mathbf{w}^{ij}_\vartheta)\|^2_{L^\infty(0,T; L^2(\Omega\times \hat Y))} 
+ \|  \delta^{\textbf{h}_j}\hat \be_y(\partial_t\mathbf{w}^{ij}_\vartheta )\|^2_{L^2(\Omega_T\times \hat  Y_M)} 
\leq Ch, 
\end{eqnarray}
where $\delta^{\textbf{h}_j} \mathbf{w}^{ij}_\vartheta(t,x,y) = \mathbf{w}^{ij}_\vartheta(t, x+\textbf{h}_j, y) -  \mathbf{w}^{ij}_\vartheta(t, x, y)$,  the function  $b^\vartheta$ is extended by zero from $\Omega_T$ in $\mathbb R_+\times\mathbb R^3$, and  
$C$ is independent of $\vartheta$.
In the same manner we obtain 
\begin{eqnarray}\label{estim5_4}
\begin{aligned}
&\| \delta^h \hat \be_y(\boldsymbol{\chi}^{ij}_{\mathbb V, \vartheta})\|^2_{L^2(\Omega_T\times \hat  Y_M)}+
 \| \delta^{\textbf{h}_j} \hat \be_y(\boldsymbol{\chi}^{ij}_{\mathbb V, \vartheta})\|^2_{L^2(\Omega_T\times \hat Y_M)}\leq Ch, 
\end{aligned}
\end{eqnarray}
where $b^\vartheta$ and $\boldsymbol{\chi}^{ij}_{\mathbb V, \vartheta}$ are extended by zero from $\Omega_T$   into $\mathbb R_+\times\mathbb R^3$, and 
\begin{eqnarray}\label{estim5_5}
\begin{aligned}
&\|  \hat \be_y(\mathbf{v}^{ij}_\vartheta (t-s+h,s))- \hat \be_y(\mathbf{v}^{ij}_\vartheta (t-s,s))\|^2_{L^2(0,T-h; L^2(\Omega_t\times \hat Y))}\leq Ch, \\
&\| \delta^{\textbf{h}_j}\hat  \be_y(\mathbf{v}^{ij}_\vartheta(t-s,s))\|^2_{L^2(0, T; L^2(\Omega_t\times \hat Y))}  \leq Ch.  \\
\end{aligned}
\end{eqnarray}

 Considering the difference of  equations in   \eqref{unit_wv}, with $b^\vartheta$ instead of $b$,  for $s+h$ and $s$, taking $\hat \be_y(\mathbf{v}^{ij}_\vartheta (t, s+ h,x))- \hat \be_y(\mathbf{v}^{ij}_\vartheta (t,s,x))$ as a test function, and using the estimates for 
$\delta^h \hat \be_y(\mathbf{w}^{ij}_\vartheta)$ and $\delta^h\hat  \be_y(\boldsymbol{\chi}^{ij}_{\mathbb V, \vartheta})$  in \eqref{estim5_1} and \eqref{estim5_4}, respectively, yield 
\begin{eqnarray}\label{estim5_6}
\|\hat \be_y(\mathbf{v}^{ij}_\vartheta (t-s, s+ h))- \hat \be_y(\mathbf{v}^{ij}_\vartheta (t-s, s))\|^2_{L^2(0,T-h; L^2(\Omega_t\times \hat Y))}\leq Ch. \qquad
\end{eqnarray}
 Thus,   \eqref{estim_nb_Kolmog} and  \eqref{estim5_1}--\eqref{estim5_6} along with   the Kolmogorov theorem and the strong convergence and boundedness of $b^\vartheta$ ensure  
 \begin{equation*}\label{conver_macro_1}
 \begin{aligned}
 &\int_{\hat Y} \hat  \be_y(\mathbf{w}_\vartheta^{ij}) dy \to \int_{\hat Y} \hat \be_y(\mathbf{w}^{ij}) dy, \qquad \quad 
\int_{\hat Y_M} \hat  \be_y(\partial_t \mathbf{w}_\vartheta^{ij}) dy \to \int_{\hat Y_M}\hat \be_y(\partial_t \mathbf{w}^{ij}) dy \;   &&  \text{ in }  L^2(\Omega_T), \\
 &
 \widetilde{\mathbb E}^{\vartheta}_{\text{hom}}(b^\vartheta)  \to \widetilde{\mathbb E}_{\text{hom}}(b), \;\qquad \qquad \qquad \; \; \quad    {\mathbb E}^{\vartheta}_{\text{hom}}(b^\vartheta)  \to {\mathbb E}_{\text{hom}}(b)    && \text{ in }  L^2(\Omega_T),\\
 &\int_{\hat Y} \hat  \be_y(\boldsymbol{\chi}^{ij}_{\mathbb V, \vartheta}) dy \to \int_{\hat Y} \hat \be_y(\boldsymbol{\chi}^{ij}_{\mathbb V}) dy, 
 \quad \qquad  \mathbb V^{\vartheta}_{\text{hom}}(b^\vartheta)  \to   \mathbb V_{\text{hom}}(b)  \;   &&  \text{ in }  L^2(\Omega_T),\quad\\
 & \int_{\hat Y} \hat  \be_y(\mathbf{v}_\vartheta^{ij}(t-s,s)) dy \to \int_{\hat Y} \hat \be_y(\mathbf{v}^{ij}(t-s,s)) dy   && \text{ in }  L^2(0,T;L^2(\Omega_t)), \\ 
 &  \widetilde{\mathbb K}^{\vartheta}(t-s, s, b^\vartheta)  \to \widetilde{\mathbb K}(t-s,s, b) &&  \text{ in }  L^2(0, T; L^2(\Omega_t)),
 \end{aligned}
 \end{equation*}
as $\vartheta \to 0$,  where 
\begin{align*}
  \widetilde{\mathbb E}^{\vartheta}_{\text{hom}, ijkl}(b^\vartheta) &= \dashint_{\hat Y} \big[  \mathbb E_{ijkl}(b^\vartheta,y) + \big(\mathbb E(n_b^\vartheta,y) \hat  \be_y(\mathbf{w}_\vartheta^{ij})\big)_{kl} \big]dy, \\
 \widetilde{\mathbb K}^{\vartheta}_{ijkl}(t,s, b^\vartheta) &= \dashint_{\hat Y}   \big(\mathbb E(b^\vartheta(t+s),y) \hat \be_y(\mathbf{v}^{ij}_\vartheta(t,s))\big)_{kl} dy. 
\end{align*}
Using estimates \eqref{estim_w_chi} and \eqref{estim_v_theta}   and the strong convergence of $b^\vartheta$  yields 
  \begin{eqnarray*} 
  \int_0^{T-s} \mathbb K^{\vartheta}_{ijkl}(t,s,x,b^\vartheta(t,x)) dt  &\to & \int_0^{T-s} \mathbb K_{ijkl}(t,s,x, b(t,x))   dt
\end{eqnarray*}
as $\vartheta \to 0$, for a.a.\ $x \in \Omega_T$ and $s \in [0,T]$. Then, estimate \eqref{estim_hom_tensor} and  the Lebesgue dominated convergence theorem, implies 
$$ \int_0^{T-s} \mathbb K^\vartheta(t,s, b^\vartheta) dt \to \int_0^{T-s}  \mathbb K(t,s, b) dt  \quad \text{  in  } \; 
L^2(\Omega_T) \quad \text{ as } \; \vartheta \to 0. $$ 
 
 The strong convergence of  $\widetilde{\mathbb E}^{\vartheta}_{\text{hom}}$ and  $\widetilde{\mathbb K}^{\vartheta}$ and estimates \eqref{apriori_u_theta} ensure the strong convergence  
 $$\mathcal N_\delta^{\rm eff} ( \be(\bu^\vartheta)) \to \mathcal N_\delta^{\rm eff} ( \be(\bu)) \quad \text{ in } \; L^2(\Omega_T) \quad \text{ as } \;  \vartheta \to 0. $$

Hence, taking the limit as $\vartheta \to 0$ in the weak formulation of \eqref{macro_vis_nu}  we obtain the macroscopic equations \eqref{macro_vis}.
 Notice that for the integral-term in \eqref{macro_vis_nu} we have 
 \begin{eqnarray*}
\Big \langle   \int_0^t \mathbb K^\vartheta(t-s,s, b^\vartheta)  \partial_s \be(\bu^\vartheta (s,x))  ds, \boldsymbol{\psi}(t,x) \Big\rangle_{\Omega_T}  = 
  \int_{\Omega_T} \partial_s \be(\bu^\vartheta(s,x))  \int_0^{T-s} \mathbb K^\vartheta(\tau,s, b^\vartheta) \boldsymbol{\psi}(\tau+s,x)  d\tau dx ds
 \end{eqnarray*}
 for all $\boldsymbol{\psi} \in C^\infty(\Omega_T)^3$, $\boldsymbol{\psi}$ is $a_3$-periodic in  $x_3$. Thus, using the weak convergence of $\partial_s \be(\bu^\vartheta)$ and the strong convergence of $\int_0^{T-s} \mathbb K^\vartheta(t,s, b^\vartheta) d t$ we can pass to the limit in  the last term in  \eqref{macro_vis_nu}.

The assumptions on the elastic $\mathbb E(b, y)$ and viscoelastic $\mathbb V_M(b, y)$ tensors  
together with the regularity and boundedness  of $b$  ensure the existence of solutions of the unit cell problems \eqref{unit_chi} and  \eqref{unit_wv}. 
As before,   the assumptions on $\mathbb E$ and $\mathbb V_M$,  the boundedness of $b$, and the estimates \eqref{estim_w_chi}--\eqref{estim_v_theta}  yield   the symmetry properties and  strong ellipticity of $\mathbb V_{\text{hom}}$, see e.g.\ \cite{OShY}, as well as the boundedness of the macroscopic tensors, i.e.\ 
$\widetilde{\mathbb E}_\text{hom} \in L^\infty(0,T; L^\infty(\Omega))$, $\mathbb E_\text{hom} \in L^2(0,T; L^\infty(\Omega))$, 
$\mathbb V_\text{hom} \in L^\infty(0,T; L^\infty(\Omega))$,  $\widetilde{\mathbb K}(t-s,s)\in L^\infty(0,T; L^\infty(0,t; L^\infty(\Omega)))$,
and $\mathbb K(t-s,s)\in  L^2(0,T; L^\infty(0,t; L^\infty(\Omega)))$.
 This together with the assumptions on the coefficients and nonlinear functions in the equations for $\p, \n$, and $b$, see Assumetion~\ref{assumptions}, ensures the  existence of a unique weak solution of the macroscopic problem \eqref{macro_1}--\eqref{macro_vis}. 
Using estimates  \eqref{apriori_u_theta} we obtain $\bu \in H^1(0,T; \cW(\Omega))$. Hence,   $\bu \in C([0,T]; \cW(\Omega))$ and  $\bu$ satisfies the  initial condition $\bu(0,x) = \bu_0(x)$ for  $x \in \Omega$. 
%
\end{proof}

%
%
%

%
%

\end{document}